\numberwithin{equation}{section}
\theoremstyle{plain}
\newtheorem{prop}{Proposition}[section]
\newtheorem{thm}[prop]{Theorem}
\newtheorem{cor}[prop]{Corollary}
\newtheorem{lem}[prop]{Lemma}
\newtheorem{cond}[prop]{Condition}
\newtheorem{algo}[prop]{Algorithm}
\theoremstyle{remark}
\newtheorem{remark}[prop]{Remark}
\newcommand{\eps}{\varepsilon}
\newcommand{\N}{\mathbb{N}}
\newcommand{\R}{\mathbb{R}}
\newcommand{\Z}{\mathbb{Z}}
\newcommand{\Bb}{\mathbb{B}}
\newcommand{\Cb}{\mathbb{C}}
\newcommand{\Xb}{\mathbb{X}}
\newcommand{\Zb}{\mathbb{Z}}
\newcommand{\Db}{\mathbb{D}}
\newcommand{\Fc}{\mathcal{F}}
\newcommand{\Bc}{\mathcal{B}}
\newcommand{\Kc}{\mathcal{K}}
\newcommand{\Cc}{\mathcal{C}}
\newcommand{\Ac}{\mathcal{A}}
\newcommand{\Vc}{\bm{\mathcal{V}}}
\newcommand{\Wc}{\mathcal{W}}
\newcommand{\Xc}{\bm{\mathcal{X}}}
\newcommand{\dotu}[1]{\dot{\underline{#1}}}
\newcommand{\Dirac}{\mathrm{Dirac}}
\newcommand{\Bern}{\mathrm{Bern}}
\newcommand{\Bin}{\mathrm{Bin}}
\newcommand{\BetaB}{\mathrm{BetaB4}}
\newcommand{\dd}{\mathrm{d}}
\newcommand{\Ex}{\mathbb{E}}
\newcommand{\Var}{\mathrm{Var}}
\newcommand{\1}{\mathbf{1}}
\newcommand{\ip}[1]{\lfloor #1 \rfloor}
\newcommand{\up}[1]{\lceil #1 \rceil}
\renewcommand{\Pr}{\mathbb{P}}
\newcommand{\p}{\overset{\sss \Pr}{\to}}
\newcommand{\cwc}{\overset{\Pr}{\underset{\Wc}{\leadsto}}}
\newcommand{\pobs}[1]{\hat{\bm #1}}
\newcommand{\sss}[1]{\scriptscriptstyle{#1}}
\begin{document}

\begin{frontmatter}

\title{Resampling techniques for a class of smooth, possibly data-adaptive empirical copulas}
\runtitle{Resampling techniques for smooth, possibly data-adaptive empirical copulas}

\begin{aug}
  \author[I]{\fnms{Ivan} \snm{Kojadinovic}\ead[label=eI]{ivan.kojadinovic@univ-pau.fr}}
  \and
  \author[I,M]{\fnms{Bingqing} \snm{Yi}\ead[label=eA]{bingqing.yi@univ-pau.fr}}
  \address[I]{CNRS / Universit\'e de Pau et des Pays de l'Adour / E2S UPPA \\ Laboratoire de math\'ematiques et applications IPRA, UMR 5142 \\ B.P. 1155, 64013 Pau Cedex, France}
  \address[M]{School of Mathematics \& Statistics \\ The University of Melbourne \\ Parkville, VIC 3010, Australia}
 
\end{aug}

\begin{abstract}
  We investigate the validity of two resampling techniques when carrying out inference on the underlying unknown copula using a recently proposed class of smooth, possibly data-adaptive nonparametric estimators that contains empirical Bernstein copulas (and thus the empirical beta copula). Following \cite{KirSegTsu21}, the first resampling technique is based on drawing samples from the smooth estimator and can only can be used in the case of independent observations. The second technique is a smooth extension of the so-called sequential dependent multiplier bootstrap and can thus be used in a time series setting and, possibly, for change-point analysis. The two studied resampling schemes are applied to confidence interval construction and the offline detection of changes in the cross-sectional dependence of multivariate time series, respectively. Monte Carlo experiments confirm the possible advantages of such smooth inference procedures over their non-smooth counterparts. A by-product of this work is the study of the weak consistency and finite-sample performance of two classes of smooth estimators of the first-order partial derivatives of a copula which can have applications in mean and quantile regression. 
\end{abstract}

\begin{keyword}[class=MSC2010]
\kwd[Primary ]{62G05}
\kwd[; secondary ]{62G20}
\end{keyword}

\begin{keyword}
\kwd{data-adaptive smooth empirical copulas}
\kwd{empirical beta copula}
\kwd{smooth data-adaptive estimators of the first-order partial derivatives of the copula}
\kwd{smooth bootstraps}
\kwd{smooth sequential dependent multiplier bootstraps}
\kwd{strong mixing}
\end{keyword}

\tableofcontents

\end{frontmatter}


\section{Introduction}

Let $\Xc_{1:n} = (\bm X_1,\dots,\bm X_n)$ be a stretch from a $d$-dimensional stationary time series $(\bm X_i)_{i \in \Z}$ of continuous random vectors. From a well-known theorem due to \citet{Skl59}, the multivariate distribution function (d.f.) $F$ of each $\bm X_i$ can be expressed as
\begin{equation}
\label{eq:sklar}
F(\bm x) = C\{F_1(x_1),\dots,F_d(x_d)\}, \qquad \bm x \in \R^d,
\end{equation}
in terms of a unique \emph{copula} $C$ and the univariate margins $F_1,\dots,F_d$ of $F$. Representation~\eqref{eq:sklar} is at root of many applications in probability, statistics and related fields \cite[see, e.g.,][and the references therein]{HofKojMaeYan18} because it suggests that $F$ can be modeled in two separate steps: the first (resp.\ second) step consists of estimating the univariate margins $F_1, \dots, F_d$ (resp.\ the copula $C$). This work is only concerned with the estimation of the copula.

Statistical inference on the unknown copula $C$ frequently involves the use of a nonparametric estimator of $C$. The best-known one is the \emph{empirical copula} \citep{Rus76,Deh79} which we shall define as the empirical d.f.\ of the multivariate ranks obtained from $\Xc_{1:n}$ scaled by $1/n$. Note that the latter function is piecewise constant and cannot therefore be a genuine copula. A promising smooth nonparametric estimator of $C$ that is a genuine copula when there are no ties in the components samples of $\Xc_{1:n}$ and that displays substantially better small-sample performance than the empirical copula is the \emph{empirical beta copula}. This estimator  was proposed by~\cite{SegSibTsu17} and is a particular case of the \emph{empirical Bernstein copula} studied by \cite{SanSat04} and \cite{JanSwaVer12} when all the underlying Bernstein polynomials have degree~$n$. Building upon the work of \cite{SegSibTsu17}, \cite{KojYi22} recently studied data-adaptive generalizations of the empirical beta copula that can perform even better in small samples.

Whatever nonparametric estimator of the unknown copula $C$ in~\eqref{eq:sklar} is used in inference procedures, it is almost always necessary to rely on resampling techniques to compute corresponding confidence intervals or p-values. To approximate the ``sampling distribution'' of the classical empirical copula, a frequently used approach in the literature is the so-called \emph{multiplier bootstrap} \citep[see, e.g.,][]{Sca05,RemSca09}. When the random vectors in $\Xc_{1:n}$ are independent and identically distributed (i.i.d.), \cite{BucDet10} found the latter resampling scheme to have better finite-sample properties than approaches consisting of adapting the empirical (multinomial) bootstrap. The multiplier bootstrap was extended to the time series and sequential settings in \cite{BucKoj16} and \cite{BucKojRohSeg14}.

One of the advantages of the empirical beta copula is that it is particularly easy to draw samples from it. The resulting \emph{smooth bootstrap} that can be used to approximate the ``sampling distribution'' of the empirical beta copula was recently studied both theoretically and empirically in \cite{KirSegTsu21}. The Monte Carlo experiments reported therein reveal that it is a competitive alternative to the multiplier bootstrap while being substantially simpler to implement. One practical inconvenience however is that the aforementioned smooth bootstrap cannot be directly extended to the time series setting.

The first aim of this work is to obtain, in the i.i.d.\ case, a smooth bootstrap \emph{à la} \cite{KirSegTsu21} for the smooth, possibly data-adaptive, nonparametric estimators of the copula investigated in \cite{KojYi22}. The second aim is to propose smooth versions of the dependent multiplier bootstrap that can be used to approximate the ``sampling distribution''  of the aforementioned estimators in a time series setting. Intuitively, one could expect that the resulting smooth inference procedures will perform better than corresponding non-smooth procedures in particular when the amount of data is low. Indeed, as already mentioned, it is when $n$ is small that smooth copula estimators can substantially outperform rough estimators such as the classical empirical copula; see for instance the finite-sample experiments reported in \cite{SegSibTsu17}, \cite{KirSegTsu21} or \cite{KojYi22}. Another situation where one could expect that the use of smooth estimators can be advantageous is when carrying out change-point detection. Indeed, statistics for change-point detection often involve the comparison of estimators computed from small subsets of observations. It is to be able to cover this application area that many of the theoretical investigations carried out in this work are of a sequential nature. 

A by-product of this work is the study of the weak consistency and finite-sample performance of two classes of smooth estimators of the first-order partial derivatives of the unknown copula $C$ in~\eqref{eq:sklar} as these are needed to carry out the dependent multiplier bootstrap. As explained for instance in \cite{JanSwaVer16}, such estimators have applications in mean and quantile regression as they lead to estimators of the conditional distribution function. From a practical perspective, our investigations lead to the proposal of a smooth data-adaptive estimator of the first-order partial derivatives of $C$ that substantially outperforms, among others, the Bernstein estimator considered in \cite{JanSwaVer16}.

This paper is organized as follows. In the second section, we recall the definition of the broad class of smooth, possibly data adaptive, empirical copulas studied in \cite{KojYi22} and the asymptotics of related sequential empirical processes. The third section is concerned with an extension of the smooth bootstrap of \cite{KirSegTsu21} that can be used to approximate the ``sampling distribution'' of the aforementioned smooth estimators in the i.i.d.\ case. After investigating its asymptotic validity, results of finite-sample experiments comparing smooth bootstraps based on the empirical beta copula and on its data-adaptive extension suggested in \cite{KojYi22} are reported. In Section~\ref{sec:mult}, to be able to cover the time series setting, we propose natural smooth extensions of the sequential dependent multiplier bootstrap. After providing asymptotic validity results, we compare the finite-sample performance of various versions of the multiplier bootstrap and consider an application to the offline detection of changes in the cross-sectional dependence of multivariate time series. The latter confirms the possible advantages of smooth inference procedures over their non-smooth counterparts. The fifth section is devoted to the study of two classes of smooth estimators of the first-order partial derivatives of $C$: their weak consistency is investigated and the finite-sample performance of selected estimators is studied. 

Unless stated otherwise, all convergences in the paper are as $n \to \infty$. Also, in the sequel, the arrow~`$\leadsto$' denotes weak convergence in the sense of Definition~1.3.3 in \cite{vanWel96} and, given a set $T$, $\ell^\infty(T)$ (resp.\ $\Cc(T)$) represents the space of all bounded (resp.\ continuous) real-valued functions on $T$ equipped with the uniform metric.

All the numerical experiments presented in the work were carried out using the \textsf{R} statistical environment \citep{Rsystem} as well as its packages \texttt{copula} \citep{copula} and \texttt{extraDistr} \citep{extraDistr}.

\section{Smooth, possibly data-adaptive, empirical copulas and their asymptotics}

In this section, we start by defining the broad class of smooth, possibly data adaptive, empirical copulas studied in \cite{KojYi22}. We then recall the asymptotics of related sequential empirical processes established in the same reference.

\subsection{Smooth, possibly data-adaptive, nonparametric copula estimators}
\label{sec:ec}

Because the results to be stated in the next section are of a sequential nature, all the quantities hereafter are defined for a substretch $\Xc_{k:l} = (\bm X_k,\dots,\bm X_l)$, $1 \leq k \leq l \leq n$, of the available data $\Xc_{1:n} = (\bm X_1,\dots,\bm X_n)$.

For any $j \in \{1,\dots,d\}$, let $F_{k:l,j}$ be the empirical d.f.\ computed from the $j$th component subsample $X_{kj},\dots,X_{lj}$ of $\Xc_{k:l}$. Then, $R_{ij}^{k:l} = (l-k+1) F_{k:l,j}(X_{ij}) = \sum_{t=k}^l \1( X_{tj} \le X_{ij})$ is the (maximal) rank of $X_{ij}$ among $X_{kj},\dots,X_{lj}$. Furthermore, let
$$
\bm R^{k:l}_i =  \left(R_{i1}^{k:l}, \dots, R_{id}^{k:l} \right) \qquad \text{and} \qquad \pobs{U}^{k:l}_i = \frac{\bm R^{k:l}_i}{l-k+1}, \qquad i \in \{k,\dots,l\},
$$
be the multivariate ranks (resp.\ multivariate scaled ranks) obtained from $\Xc_{k:l}$. 
Following \cite{Rus76}, the empirical copula $C_{k:l}$ of $\Xc_{k:l}$ is then defined, for any $\bm u = (u_1,\dots,u_d) \in [0,1]^d$, by
\begin{equation}
\label{eq:C:kl}
C_{k:l}(\bm u) = \frac{1}{l-k+1} \sum_{i=k}^l  \prod_{j=1}^d \1\left( \frac{R_{ij}^{k:l}}{l-k+1} \leq u_j \right)
= \frac{1}{l-k+1} \sum_{i=k}^l  \1(\pobs{U}^{k:l}_i \leq \bm u),
\end{equation}
where inequalities between vectors are to be understood componentwise.

As we continue, following \cite{KojYi22}, for any $m \in \N$, $\bm x \in (\R^d)^m$ and $\bm u \in [0,1]^d$, $\nu_{\bm u}^{\bm x}$ is the law of a $[0,1]^d$-valued mean $\bm u$ random vector $\bm W_{\bm u}^{\bm x}$. Its components are denoted by $W_{1,u_1}^{\bm x}, \dots, W_{d,u_d}^{\bm x}$ to indicate that the $j$th component depends on $u_j$ but not on $u_1,\dots,u_{j-1},u_{j+1},\dots,u_d$. Let $p \geq d$ be a fixed integer and let $\bm U$ be a $p$-dimensional random vector whose components are independent and standard uniform. The following assumption was considered in \cite{KojYi22} and is likely to be non-restrictive as discussed in Remark~3 therein.

\begin{cond}[Construction of smoothing random vectors]
  For any $m \in \N$, $\bm x \in (\R^d)^m$ and $\bm u \in [0,1]^d$, there exists a function $\Wc_{\bm u}^{\bm x}: [0,1]^p \to [0,1]^d$ such that $\bm W_{\bm u}^{\bm x} = \Wc_{\bm u}^{\bm x}(\bm U)$. 
\end{cond}

To be able to define, for any $n \in \N$, $\Xc_{1:n}$ and, for any $m \leq n$, the random vectors $\bm W_{\bm u}^{\bm x}$,  $\bm x \in (\R^d)^m$, $\bm u \in [0,1]^d$, on the same probability space $(\Omega, \Ac, \Pr)$, we assume a product structure, that is, $\Omega=\Omega_0 \times \Omega_1 \times \dots$ with probability measure $\Pr=\Pr_0 \otimes \Pr_1 \otimes \dots$, where $\Pr_i$ denotes the probability measure on~$\Omega_i$, such that, for any $\omega \in \Omega$, $\bm \Xc_{1:n}(\omega)$ only depends on the first coordinate of $\omega$, $\bm U(\omega)$ only depends on the second coordinate of~$\omega$ and potential ``bootstrap weights'' (to be introduced in Sections~\ref{sec:boot} and~\ref{sec:mult}) only depend on one of the remaining coordinates of $\omega$, implying in particular that $\Xc_{1:n}$, $\bm U$ and potential bootstrap weights are independent. A broad class of smooth versions of $C_{k:l}$ in~\eqref{eq:C:kl}, with possibly data-adaptive smoothing, is then given by
  \begin{equation}
  \label{eq:C:kl:nu}
  C_{k:l}^\nu(\bm u) = \int_{[0,1]^d} C_{k:l}(\bm w) \dd \nu_{\bm u}^{\sss \Xc_{k:l}}(\bm w), 
  \qquad \bm u \in [0,1]^d.
\end{equation}
Intuitively, for a given $\bm u \in [0,1]^d$, $C_{k:l}^\nu(\bm u)$ can be thought of as a ``weighted average'' of $C_{k:l}(\bm w)$ for $\bm w$ ``in a neighborhood of $\bm u$'' according to the smoothing distribution $\nu_{\bm u}^{\sss \Xc_{k:l}}$ (that may depend on the observations $\Xc_{k:l}$). Note that, if $k > l$, we adopt the convention that $C_{k:l} =  C_{k:l}^\nu = 0$ and that, for any $\bm u \in [0,1]^d$, $\nu_{\bm u}^{\sss \Xc_{k:l}}$ is the Dirac measure at $\bm u$.

\begin{remark}
  \label{rem:beta}
  Given $m \in \N$ and $\bm u \in [0,1]^d$, let $\mu_{m,\bm u}$ be the law of the $d$-dimensional random vector $(S_{m,1,u_1}/m,\dots, S_{m,d,u_d}/m)$ such that the random variables $S_{m,1,u_1}, \dots, S_{m,d,u_d}$ are independent and, for each $j \in \{1,\dots d\}$, $S_{m,j,u_j}$ is Binomial$(m,u_j)$. From Section~3 of \cite{SegSibTsu17}, the empirical Bernstein copula of $\Xc_{k:l}$ whose Bernstein polynomial degrees are all equal to $m$ is then given by
\begin{equation}
  \label{eq:C:Bern:kl:m}
  C_{k:l,m}^{\sss \Bern}(\bm u) =  \int_{[0,1]^d} C_{k:l}(\bm w) \dd \mu_{m,\bm u}(\bm w), \qquad \bm u \in [0,1]^d.
\end{equation}
The latter is clearly a special case of $C_{k:l}^\nu$ in~\eqref{eq:C:kl:nu}. If, additionally, $m = l-k+1$, that is, if the smoothing distributions satisfy $\nu_{\bm u}^{\sss \Xc_{k:l}} = \mu_{l-k+1,\bm u}$, $\bm u \in [0,1]^d$, $C_{k:l}^\nu$ in~\eqref{eq:C:kl:nu} or, equivalently, $C_{k:l,m}^{\sss \Bern}$ in~\eqref{eq:C:Bern:kl:m}, corresponds to the empirical beta copula of $\Xc_{k:l}$ studied in \cite{SegSibTsu17}.
\end{remark}

For any $m \in \N$, $\bm x \in (\R^d)^m$, $\bm r \in [0,m]^d$ and $\bm u \in [0,1]^d$, let
\begin{equation}
  \label{eq:K}
  \Kc_{\bm r}^{\bm x}(\bm u) =  \int_{[0,1]^d} \1(\bm r / m \leq \bm w) \dd\nu_{\bm u}^{\bm x}(\bm w) = \Ex \left \{ \1(\bm r / m \leq \bm W_{\bm u}^{\bm x}) \right\}.
\end{equation}
By linearity of the integral, 
we can then express $C_{k:l}^\nu$ in~\eqref{eq:C:kl:nu} as
\begin{equation}
  \label{eq:C:kl:K}
C_{k:l}^\nu(\bm u) = \frac{1}{l-k+1} \sum_{i=k}^l \Kc_{\sss{\bm R^{k:l}_i}}^{\sss \Xc_{k:l}}(\bm u), \qquad \bm u \in [0,1]^d.
\end{equation}

Since copulas have standard uniform margins, it is particularly meaningful to focus on estimators of the form~\eqref{eq:C:kl:K} that have standard uniform margins. As verified in Section 3.1 of \cite{KojYi22}, the following two assumptions imply the latter.

\begin{cond}[No ties]
\label{cond:no:ties}
With probability~1, there are no ties in each of the component samples $X_{1j}, \dots, X_{nj}$, $j \in \{1,\dots,d\}$, of $\Xc_{1:n}$.
\end{cond}

\begin{cond}[Condition for uniform margins]
  \label{cond:unif:marg}
For any $m \in \N$, $\bm x \in (\R^d)^m$, $\bm u \in [0,1]^d$ and $j \in \{1,\dots,d\}$, $W_{j,u_j}^{\bm x}$ takes its values in the set $\{0,1/m,\dots,(m-1)/m,1\}$.
\end{cond}

Under Condition~\ref{cond:unif:marg}, from Section~3.2 of \cite{KojYi22}, for any $m \in \N$, $\bm x \in (\R^d)^m$, $\bm r \in [1,m]^d$ and $\bm u \in [0,1]^d$, $\Kc_{\bm r}^{\bm x}(\bm u)$ in \eqref{eq:K} can be written as
\begin{equation}
  \label{eq:K:dis}
  \Kc_{\bm r}^{\bm x}(\bm u) = \bar \Cc_{\bm u}^{\bm x} \left[ \bar \Fc_{1,u_1}^{\bm x}\{ (r_1 - 1) / m\}, \dots, \bar \Fc_{d,u_d}^{\bm x} \{ (r_d - 1) / m \} \right],
\end{equation}
where $\bar \Cc_{\bm u}^{\bm x}$ (resp.\ $\bar \Fc_{1, u_1}^{\bm x}, \dots, \bar \Fc_{d, u_d}^{\bm x}$) is a survival copula (resp.\ are the marginal survival functions) of the random vector $\bm W_{\bm u}^{\bm x}$. Upon additionally assuming the following two conditions considered in Section 3.2 of \cite{KojYi22}, estimators of the form~\eqref{eq:C:kl:K} can be shown to be genuine copulas.

\begin{cond}[Condition on the smoothing survival margins]
  \label{cond:smooth:surv:marg}
  For any $m \in \N$, $\bm x \in (\R^d)^m$, $j \in \{1,\dots,d\}$ and $w \in [0,1)$, the function $t \mapsto \bar \Fc_{j,t}^{\bm x}(w)$ is right-continuous and increasing on $[0,1]$.
\end{cond}

\begin{cond}[Condition on the smoothing survival copulas]
  \label{cond:smooth:cop}
  For any $m \in \N$, $\bm x \in (\R^d)^m$ and $\bm u \in [0,1]^d$, the copulas $\bar \Cc_{\bm u}^{\bm x}$ in~\eqref{eq:K:dis} do not depend on $\bm u$, that is, $\bar \Cc_{\bm u}^{\bm x} = \bar \Cc^{\bm x}$.
\end{cond}

The following result was then proven in \cite{KojYi22}; see Proposition~11 and Corollary~12 therein.

\begin{prop}[$C_{k:l}^\nu$ is a genuine copula]
  \label{prop:genuine:copula}
  Assume that Conditions~\ref{cond:no:ties}, \ref{cond:unif:marg},~\ref{cond:smooth:surv:marg} and~\ref{cond:smooth:cop} hold. Then, the smooth empirical copula $C_{k:l}^\nu$ in~\eqref{eq:C:kl:nu} or in~\eqref{eq:C:kl:K} can be expressed, for any $\bm u \in [0,1]^d$, as
\begin{equation}
  \label{eq:C:kl:dis}
  C_{k:l}^\nu(\bm u) = \frac{1}{l-k+1} \sum_{i=k}^l \bar \Cc^{\sss \Xc_{k:l}} \left\{ \bar \Fc_{1,u_1}^{\sss \Xc_{k:l}} \left( \frac{R_{i1}^{k:l} - 1}{l-k+1} \right), \dots, \bar \Fc_{d,u_d}^{\sss \Xc_{k:l}} \left( \frac{R_{id}^{k:l} - 1}{l-k+1} \right) \right\}, 
\end{equation}
and is a genuine copula.
\end{prop}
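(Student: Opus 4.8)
The plan is to treat the two claims separately, obtaining the closed form \eqref{eq:C:kl:dis} first since it is essentially a change of notation. Starting from the representation \eqref{eq:C:kl:K}, I would apply \eqref{eq:K:dis} to each kernel $\Kc_{\bm R^{k:l}_i}^{\sss \Xc_{k:l}}(\bm u)$ with the choices $m = l-k+1$ and $\bm r = \bm R^{k:l}_i$. Because the maximal ranks satisfy $R_{ij}^{k:l} \in \{1,\dots,l-k+1\}$ (in particular $R_{ij}^{k:l} \ge 1$), we have $\bm R^{k:l}_i \in [1,m]^d$, so \eqref{eq:K:dis} is indeed applicable. Condition~\ref{cond:smooth:cop} then allows me to drop the dependence on $\bm u$ of the survival copulas, replacing each $\bar\Cc_{\bm u}^{\sss \Xc_{k:l}}$ by the common $\bar\Cc^{\sss \Xc_{k:l}}$; substituting back into \eqref{eq:C:kl:K} produces \eqref{eq:C:kl:dis}.

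For the second claim I would verify that $C_{k:l}^\nu$ satisfies the defining properties of a copula, namely that it is a right-continuous, grounded, $d$-increasing function on $[0,1]^d$ with standard uniform margins. The uniform-margin property needs no new argument: under Conditions~\ref{cond:no:ties} and~\ref{cond:unif:marg} it has already been established in Section~3.1 of \cite{KojYi22}. The heart of the proof, and the step I expect to be the main obstacle, is the $d$-increasing property, and it is exactly here that Conditions~\ref{cond:smooth:surv:marg} and~\ref{cond:smooth:cop} are used. Fixing $i$ and writing $m = l-k+1$ and $w_j = (R_{ij}^{k:l}-1)/m$, the $i$th summand in \eqref{eq:C:kl:dis} is the map $\bm u \mapsto \bar\Cc^{\sss \Xc_{k:l}}\{g_1(u_1),\dots,g_d(u_d)\}$, where $g_j(t) = \bar\Fc_{j,t}^{\sss \Xc_{k:l}}(w_j)$ and $w_j \in \{0,1/m,\dots,(m-1)/m\} \subseteq [0,1)$. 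By Condition~\ref{cond:smooth:surv:marg} each $g_j$ is nondecreasing and right-continuous, while Condition~\ref{cond:smooth:cop} guarantees that the outer copula $\bar\Cc^{\sss \Xc_{k:l}}$ is one and the same for every $\bm u$. I would then invoke the elementary fact that composing a $d$-increasing function coordinatewise with nondecreasing maps preserves $d$-increasingness: over any box $[\bm a,\bm b] \subseteq [0,1]^d$ the volume increment of the summand equals the increment of $\bar\Cc^{\sss \Xc_{k:l}}$ over the image box with corners $(g_1(a_1),\dots,g_d(a_d))$ and $(g_1(b_1),\dots,g_d(b_d))$, which is nonnegative because $\bar\Cc^{\sss \Xc_{k:l}}$ is a copula. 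Right-continuity of the summand likewise follows from the right-continuity of the $g_j$ and the (Lipschitz) continuity of $\bar\Cc^{\sss \Xc_{k:l}}$. As a convex combination of right-continuous, $d$-increasing functions is again right-continuous and $d$-increasing, $C_{k:l}^\nu$ inherits both properties.

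Groundedness I would read off directly from \eqref{eq:K} rather than \eqref{eq:C:kl:dis}: with $\bm r = \bm R^{k:l}_i$ and $m = l-k+1$ one has $\Kc_{\bm R^{k:l}_i}^{\sss \Xc_{k:l}}(\bm u) = \Pr(\bm W_{\bm u}^{\sss \Xc_{k:l}} \ge \bm R^{k:l}_i/m)$; since $\bm W_{\bm u}^{\sss \Xc_{k:l}}$ has mean $\bm u$ and takes values in $[0,1]^d$, the vanishing of any coordinate $u_j$ forces the corresponding component to equal $0$ almost surely, whereas $R_{ij}^{k:l}/m \ge 1/m > 0$, so each summand, and hence $C_{k:l}^\nu$, vanishes whenever some $u_j = 0$. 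Collecting right-continuity, groundedness, $d$-increasingness and uniform margins shows that $C_{k:l}^\nu$ is a genuine copula. The one genuinely substantive point is the $d$-increasing step: without the $\bm u$-free outer copula provided by Condition~\ref{cond:smooth:cop} the summands would cease to be a clean coordinatewise reparametrization of a single copula, so the composition argument would fail, and Condition~\ref{cond:smooth:surv:marg} is precisely what makes that reparametrization monotone and hence admissible.
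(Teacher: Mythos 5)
Your proof is correct, and it shares the overall skeleton of the argument available in the paper, but it swaps out the paper's key lemma for a hand-made one. The paper itself attributes the proposition to Proposition~11 and Corollary~12 of \cite{KojYi22}, and then gives its own simpler proof via Proposition~\ref{prop:mixture} and the remark following it: derive \eqref{eq:C:kl:dis} from \eqref{eq:C:kl:K} and \eqref{eq:K:dis} using Condition~\ref{cond:smooth:cop}; show that each kernel $\Kc_{\bm r}^{\sss \Xc_{k:l}}$ is a multivariate d.f.\ on $[0,1]^d$; conclude that the convex combination $C_{k:l}^\nu$ is a multivariate d.f.; and import the standard-uniform-margins property from Section~3.1 of \cite{KojYi22} under Conditions~\ref{cond:no:ties} and~\ref{cond:unif:marg}. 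You do exactly this except at the middle step. There, the paper checks that each marginal map $u \mapsto \bar \Fc_{j,u}^{\sss \Xc_{k:l}}\{(r_j-1)/(l-k+1)\}$ is a genuine univariate d.f.\ on $[0,1]$ (right-continuous and increasing by Condition~\ref{cond:smooth:surv:marg}, equal to $0$ at $u=0$ and to $1$ at $u=1$ because $W_{j,0}^{\bm x}=0$ and $W_{j,1}^{\bm x}=1$ almost surely, the mean-$\bm u$ and $[0,1]$-valued properties you also exploit) and then invokes Sklar's theorem to conclude that $\bar \Cc^{\sss \Xc_{k:l}}$ composed with these margins is a multivariate d.f.\ with those margins and that copula. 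You instead verify $d$-increasingness via the image-box volume identity, groundedness directly from \eqref{eq:K}, and right-continuity; this is precisely the constructive direction of Sklar's theorem unpacked, and all steps are sound (in particular your observation that $\bm r = \bm R_i^{k:l} \in [1,m]^d$ legitimizes the use of \eqref{eq:K:dis}, and your degenerate-image-box case causes no trouble since the corresponding volume is zero). Both arguments use Conditions~\ref{cond:smooth:surv:marg} and~\ref{cond:smooth:cop} at the same point and for the same reason, so the difference is one of packaging: yours is more self-contained and isolates $d$-increasingness as the crux, while the paper's Sklar-based route is shorter and additionally identifies the margins and the copula of each mixture component --- information the paper reuses to make the smooth bootstrap sampling step explicit in Algorithm~\ref{algo:sampling} and \eqref{eq:V:hash}.
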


From Remark~\ref{rem:beta} above, we can infer that the empirical beta copula of $\Xc_{k:l}$ studied in \cite{SegSibTsu17} is of the form~\eqref{eq:C:kl:dis} with $\bar \Cc^{\sss \Xc_{k:l}}$ the independence copula and, for any $j \in \{1,\dots,d\}$ and $u \in [0,1]$ $\bar \Fc_{j,u}^{\sss \Xc_{k:l}}$ the survival function of a scaled (by $1/(l-k+1)$) Binomial($l-k+1, u$) random variable. For that reason, the latter will be denoted as $C_{k:l}^{\sss \Bin}$ as we continue. As a possible improvement of the empirical beta copula $C_{k:l}^{\sss \Bin}$ of $\Xc_{k:l}$, \cite{KojYi22} suggested to consider a smooth data-adaptive empirical copula of the form~\eqref{eq:C:kl:dis} with $\bar \Cc^{\sss \Xc_{k:l}}$ the empirical beta copula $C_{k:l}^{\sss \Bin}$ and, for any $j \in \{1,\dots,d\}$ and $u \in [0,1]$, $\bar \Fc_{j,u}^{\sss \Xc_{k:l}}$ the survival function of a scaled (by $1/(l-k+1)$) Beta-Binomial$(m, \alpha, \beta)$  random variable, where $m = l- k + 1$, $\alpha = u(m - \rho)/(\rho - 1)$, $\beta = (1-u)(m - \rho)/(\rho - 1)$ and $\rho=4$. The resulting data-adaptive estimator, denoted by $C_{k:l}^{\sss \BetaB}$ as we continue, was found to outperform the empirical beta copula $C_{k:l}^{\sss \Bin}$ in terms of integrated mean squared error in all the bivariate and trivariate experiments considered in \cite{KojYi22}.

\subsection{Asymptotics of related sequential processes}
\label{sec:asym}

We can now define the sequential empirical processes corresponding to the empirical copula in~\eqref{eq:C:kl} and to its smooth generalizations in~\eqref{eq:C:kl:nu}. Let $\Lambda = \{(s,t) \in [0,1]^2 : s \leq t \}$ and let $\lambda_n(s,t) = (\ip{nt} - \ip{ns})/n$, $(s, t) \in \Lambda$. The corresponding two-sided sequential empirical copula processes are given, for any $(s,t) \in \Lambda$ and $\bm u \in [0,1]^d$,  by
\begin{align}
  \label{eq:Cb:n}
  \Cb_n(s,t,\bm u) &= \sqrt{n}\lambda_n(s,t)\{C_{\ip{ns} +1 :\ip{nt}}(\bm u)-C(\bm u)\}, \\
  \label{eq:Cb:n:nu}
\Cb_n^\nu(s,t,\bm u) &=\sqrt{n}\lambda_n(s,t)\{C_{\ip{ns} +1 :\ip{nt} }^\nu(\bm u)-C(\bm u)\},
\end{align}
where $C_{\ip{ns} +1 :\ip{nt}}$ and $C_{\ip{ns} +1 :\ip{nt}}^\nu$ are generically defined in~\eqref{eq:C:kl} and~\eqref{eq:C:kl:nu}, respectively. The asymptotics of $\Cb_n$ were established in \cite{BucKoj16}, while the asymptotics of $\Cb_n^\nu$ (which we recall in Theorem~\ref{thm:Cb:n:nu} hereafter) were investigated in \cite{KojYi22} by generalizing the arguments used in \cite{SegSibTsu17}.

The following conditions were considered in \cite{KojYi22}.

\begin{cond}[Smooth partial derivatives]
\label{cond:pd}
For any $j \in \{1,\dots,d\}$, the partial derivative $\dot C_j = \partial C/\partial u_j$ exists and is continuous on the set $V_j = \{ \bm u \in [0, 1]^d : u_j \in (0,1) \}$.
\end{cond}

\begin{cond}[Variance condition]
  \label{cond:var:W}
  There exists a constant $\kappa > 0$ such that, for any $n \in \N$, $\bm x \in (\R^d)^n$, $\bm u \in [0,1]^d$ and $j \in \{1,\dots,d\}$, $\Var( W_{j,u_j}^{\bm x}) \leq \kappa u_j(1-u_j) / n$.
\end{cond}

The first condition was initially considered in \cite{Seg12} and can be considered non-restricted as explained in the latter reference.  In the rest of the paper, for any $j \in \{1,\dots,d\}$, $\dot C_j$ is arbitrarily defined to be zero on the set $\{ \bm u \in [0, 1]^d : u_j \in \{0,1\} \}$, which implies that, under Condition~\ref{cond:pd}, $\dot C_j$ is defined on the whole of $[0, 1]^d$. The second condition imposes constraints on the spread of the smoothing distributions involved in the definition of the smooth, possibly data-adaptive, empirical copulas.

\begin{thm}[Asymptotics of $\Cb_n^\nu$]
\label{thm:Cb:n:nu}
Assume that Conditions~\ref{cond:pd} and~\ref{cond:var:W} hold, and that $\Cb_n \leadsto \Cb_C$ in $ \ell^\infty(\Lambda \times [0,1]^d)$, where the trajectories of the limiting process $\Cb_C$ are continuous almost surely. Then,
$$
\sup_{\substack{(s,t) \in \Lambda \\ \bm u \in [0,1]^d}} |\Cb_n^\nu(s,t,\bm u) - \Cb_n(s,t,\bm u) | = o_\Pr(1).
$$
Consequently,  $\Cb_n^\nu \leadsto \Cb_C$ in $\ell^\infty(\Lambda \times[0,1]^d)$.
\end{thm}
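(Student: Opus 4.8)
The final weak convergence is immediate once the uniform negligibility bound is established: since $\Cb_n \leadsto \Cb_C$ and $\sup|\Cb_n^\nu - \Cb_n| = o_\Pr(1)$, an application of Slutsky's lemma in $\ell^\infty(\Lambda \times [0,1]^d)$ yields $\Cb_n^\nu \leadsto \Cb_C$. The whole difficulty therefore lies in the supremum bound, and the plan is to exploit the representation~\eqref{eq:C:kl:nu}. Writing $m = \ip{nt} - \ip{ns}$ with $k = \ip{ns}+1$, $l = \ip{nt}$, and using that $\nu_{\bm u}^{\sss\Xc_{k:l}}$ is a probability measure so that $C(\bm u) = \int C(\bm u)\dd\nu_{\bm u}^{\sss\Xc_{k:l}}(\bm w)$, I would decompose
\begin{align*}
\Cb_n^\nu(s,t,\bm u) - \Cb_n(s,t,\bm u)
&= \int_{[0,1]^d}\{\Cb_n(s,t,\bm w) - \Cb_n(s,t,\bm u)\}\dd\nu_{\bm u}^{\sss\Xc_{k:l}}(\bm w) \\
&\quad + \sqrt{n}\lambda_n(s,t)\int_{[0,1]^d}\{C(\bm w) - C(\bm u)\}\dd\nu_{\bm u}^{\sss\Xc_{k:l}}(\bm w),
\end{align*}
and call the two terms on the right-hand side $A_n(s,t,\bm u)$ and $B_n(s,t,\bm u)$. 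It then suffices to prove $\sup|A_n| = o_\Pr(1)$ and $\sup|B_n| = o_\Pr(1)$ separately. The guiding intuition is that, by Condition~\ref{cond:var:W}, the smoothing law $\nu_{\bm u}^{\sss\Xc_{k:l}}$ concentrates around its mean $\bm u$ at rate $1/m$, so $A_n$ is controlled by the oscillations of the asymptotically tight process $\Cb_n$ and $B_n$ by the smoothness of $C$.

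For the stochastic term $A_n$, I would first record that the assumed convergence $\Cb_n \leadsto \Cb_C$ with almost surely continuous limit gives both $\|\Cb_n\|_\infty = O_\Pr(1)$ and the asymptotic uniform equicontinuity of $\Cb_n$ on $\Lambda \times [0,1]^d$. Splitting the integral over $\{\|\bm w - \bm u\|_\infty \leq \delta\}$ and its complement, the near part is bounded by the modulus of continuity of $\Cb_n$ in its last argument, which is uniformly small in probability once $\delta$ is chosen small; the far part is bounded by $2\|\Cb_n\|_\infty$ times $\sup_{\bm u}\nu_{\bm u}^{\sss\Xc_{k:l}}(\|\bm w - \bm u\|_\infty > \delta)$, and a union bound with Chebyshev's inequality and Condition~\ref{cond:var:W} gives $\nu_{\bm u}^{\sss\Xc_{k:l}}(\|\bm w - \bm u\|_\infty > \delta) \leq d\kappa/(4m\delta^2)$. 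Some care is needed to make the far part vanish uniformly over all segment lengths at once: for short segments one replaces the crude $\|\Cb_n\|_\infty$ by the segment-specific bound $\sup_{\bm w}|\Cb_n(s,t,\bm w)| \leq (m/\sqrt n)\sup_{\bm w}|C_{k:l}(\bm w) - C(\bm w)|$, but the principle throughout is that the product of a tight quantity and a vanishing mass is negligible.

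The bias term $B_n$ is the heart of the proof. Since $\sqrt{n}\lambda_n(s,t) = m/\sqrt{n}$ and $C$ is $1$-Lipschitz in each argument, the crude estimate $|B_n| \leq (m/\sqrt n)\sum_j\Ex|W_{j,u_j}^{\sss\Xc_{k:l}} - u_j| = O(\sqrt{m/n})$ only yields boundedness and is \emph{not} sufficient for long segments with $m = \Theta(n)$. The essential gain comes from the centering $\Ex(W_{j,u_j}^{\sss\Xc_{k:l}}) = u_j$: subtracting the first-order term, which integrates to zero, reduces the problem to controlling
\begin{equation*}
\Ex\Bigl|\,C(\bm W_{\bm u}^{\sss\Xc_{k:l}}) - C(\bm u) - \sum_{j=1}^d \dot C_j(\bm u)\bigl(W_{j,u_j}^{\sss\Xc_{k:l}} - u_j\bigr)\Bigr|.
\end{equation*}
Under Condition~\ref{cond:pd}, a coordinatewise telescoping of $C(\bm w) - C(\bm u)$ into integrals of the partial derivatives bounds the integrand by $\sum_j |W_{j,u_j}^{\sss\Xc_{k:l}} - u_j|$ times the oscillation of $\dot C_j$ over a neighbourhood of $\bm u$ of size $\|\bm W_{\bm u}^{\sss\Xc_{k:l}} - \bm u\|$. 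A Cauchy--Schwarz step then separates $\sqrt{\Var(W_{j,u_j}^{\sss\Xc_{k:l}})} = O(1/\sqrt m)$ from a factor that tends to zero because the partials are continuous and $\bm W_{\bm u}^{\sss\Xc_{k:l}}$ concentrates at $\bm u$; this gives a bias of order $o(1/\sqrt m)$, whence $\sup|B_n| \leq (m/\sqrt n)\,o(1/\sqrt m) = \sqrt{m/n}\,o(1) = o(1)$ because $m \leq n$.

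The main obstacle will be establishing these estimates \emph{uniformly} in $\bm u \in [0,1]^d$ and over all segments simultaneously, and in particular on the boundary faces $\{u_j \in \{0,1\}\}$, where Condition~\ref{cond:pd} only guarantees continuity of $\dot C_j$ on the open set $V_j$ and where $\dot C_j$ need not be uniformly continuous. The mitigating structural fact is that Condition~\ref{cond:var:W} forces $\Var(W_{j,u_j}^{\sss\Xc_{k:l}}) \leq \kappa u_j(1-u_j)/m$ to degenerate as $u_j \to 0$ or $u_j \to 1$, so that the $j$th coordinate of $\bm W_{\bm u}^{\sss\Xc_{k:l}}$ barely moves near the $j$th boundary. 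Following the strategy of \cite{Seg12} and \cite{SegSibTsu17}, I would therefore split $[0,1]^d$ into an interior region, where uniform continuity of the partials on a compact set applies, and a shrinking boundary layer, where the vanishing variance and the Lipschitz bound on $C$ compensate for the loss of control on $\dot C_j$; reconciling the two regimes so that the $o(1)$ rates hold uniformly is the delicate technical point.
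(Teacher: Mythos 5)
Your proposal is correct and takes essentially the same approach as the paper's: Theorem~\ref{thm:Cb:n:nu} is recalled from \cite{KojYi22}, and the argument there (reflected in this paper by Lemmas~\ref{lem:stochastic} and~\ref{lem:bias} of Appendix~\ref{proof:cor:wc:Cb:n:nu}) rests on exactly your decomposition of $\Cb_n^\nu-\Cb_n$ into the smoothed stochastic term $A_n$, handled via asymptotic equicontinuity of $\Cb_n$ and Chebyshev concentration of the smoothing laws under Condition~\ref{cond:var:W}, and the bias term $B_n$, handled via the mean-$\bm u$ centering, Condition~\ref{cond:pd}, and an interior/boundary split in the spirit of \cite{Seg12} and \cite{SegSibTsu17}. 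No gaps to report.
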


Hence, the smooth sequential empirical copula process $\Cb_n^\nu$ in~\eqref{eq:Cb:n:nu} and the classical sequential empirical copula process $\Cb_n$ in~\eqref{eq:Cb:n} are asymptotically equivalent when the latter converges weakly to a limiting process whose trajectories are continuous almost surely. As discussed in Section~3 of \cite{BucKoj16}, for such a convergence to hold, it suffices that the corresponding ``uniform multivariate sequential empirical process'' converges weakly to a limiting process whose trajectories are continuous almost surely. Specifically, let $\bm U_1,\dots,\bm U_n$ be the unobservable sample obtained from $\Xc_{1:n} = (\bm X_1, \dots, \bm X_n)$ by the probability integral transformations $U_{ij} = F_j(X_{ij})$, $i \in \{1,\dots,n\}$, $j \in \{1,\dots,d\}$, and let
\begin{equation}
\label{eq:Bb:n}
  \Bb_n(s, t, \bm u) = \frac{1}{\sqrt{n}} \sum_{i=\ip{ns}+1}^{\ip{nt}} \{\1(\bm U_i \leq \bm u) - C(\bm u) \}, \qquad (s, t,\bm u) \in \Lambda \times [0, 1]^d,
\end{equation}
with the convention that $\Bb_n(s, t, \cdot) = 0$ if $\ip{nt} - \ip{ns} = 0$. The aforementioned sufficient condition can then be stated as follows.

\begin{cond}[Weak convergence of $\Bb_n(0, \cdot, \cdot)$]
\label{cond:Bn}
The sequential empirical process $\Bb_n(0, \cdot, \cdot)$ converges weakly in $\ell^\infty([0,1]^{d+1})$ to a tight centered Gaussian process $\Zb_C$ concentrated on
\begin{multline*}
\{ f \in \Cc([0,1]^{d+1}) : f(s,\bm u) = 0 \text{ if one of the components of $(s,\bm u)$ is 0,} \\ \text{ and } f(s,1,\dots,1) = 0 \text{ for all } s \in (0,1] \}.
\end{multline*}
\end{cond}

Under Condition~\ref{cond:Bn} (which holds for instance when $(\bm X_i)_{i \in \Z}$ is \emph{strongly mixing}; see, e.g., \cite{Buc15} as well as forthcoming Section~\ref{sec:intuition}), it immediately follows from the continuous mapping theorem that $\Bb_n \leadsto \Bb_C$ in $\ell^\infty(\Lambda \times [0, 1]^d)$, where
\begin{equation}
\label{eq:Bb:C}
  \Bb_C(s, t,\bm u) = \Z_C(t,\bm u) - \Z_C(s,\bm u),   \qquad (s, t,\bm u) \in \Lambda \times [0,1]^d.
\end{equation}

For any $j \in \{1,\dots,d\}$ and any $\bm u \in [0, 1]^d$, let $\bm u^{(j)}$ be the vector of $[0, 1]^d$ defined by $u^{(j)}_i = u_j$ if $i = j$ and 1 otherwise. The following result is then an immediate consequence of Theorem~3.4 in \cite{BucKoj16} and Proposition~3.3 of \cite{BucKojRohSeg14}.

\begin{thm}[Asymptotics of $\Cb_n$]
\label{thm:wc:Cb:n}
Under Conditions~\ref{cond:pd} and~\ref{cond:Bn},
\begin{equation*}
  \sup_{(s, t,\bm u)\in \Lambda \times [0, 1]^d} \left| \Cb_n(s, t, \bm u) - \tilde \Cb_n(s, t, \bm u) \right| = o_\Pr(1),
\end{equation*}
where
\begin{equation}
\label{eq:Cb:n:tilde}
\tilde \Cb_n(s, t, \bm u) = \Bb_n(s, t, \bm u) - \sum_{j=1}^d \dot C_j(\bm u) \, \Bb_n(s, t, \bm u^{(j)}), \qquad (s, t, \bm u) \in \Lambda \times [0, 1]^d,
\end{equation}
and $\Bb_n$ is defined in~\eqref{eq:Bb:n}. Consequently, $\Cb_n \leadsto \Cb_C$ in $\ell^\infty(\Lambda \times [0, 1]^d)$, where
\begin{equation}
\label{eq:Cb:C}
\Cb_C(s, t, \bm u)  = \Bb_C(s, t, \bm u) - \sum_{j=1}^d \dot C_j(\bm u) \, \Bb_C(s, t, \bm u^{(j)}), \qquad (s, t, \bm u) \in \Lambda \times [0, 1]^d,
\end{equation}
and $\Bb_C$ is defined in~\eqref{eq:Bb:C}.
\end{thm}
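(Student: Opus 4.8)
The statement packages two claims: first, the uniform asymptotic linear representation $\sup_{(s,t,\bm u)}|\Cb_n(s,t,\bm u) - \tilde\Cb_n(s,t,\bm u)| = o_\Pr(1)$, and second, the resulting weak limit $\Cb_n \leadsto \Cb_C$. The plan is to obtain the representation by transferring everything to the unobservable uniform scale and then linearizing, after which the weak convergence drops out by the continuous mapping theorem. As the excerpt already signals, the representation is exactly the content of Theorem~3.4 in \cite{BucKoj16} specialized to the present assumptions, with the explicit form of the limit supplied by Proposition~3.3 of \cite{BucKojRohSeg14}; so the task is really to assemble these two ingredients and to check their hypotheses against Conditions~\ref{cond:pd} and~\ref{cond:Bn}.

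For the representation itself, here is the mechanism I would spell out. Writing $k = \ip{ns}+1$ and $l = \ip{nt}$, Condition~\ref{cond:no:ties} and the monotonicity of each margin $F_j$ let me rewrite $C_{k:l}(\bm u)$ on the uniform scale: with $U_{ij} = F_j(X_{ij})$, the scaled ranks equal $G_{k:l,j}(U_{ij})$, where $G_{k:l,j}$ is the $j$th marginal empirical d.f.\ of the uniforms over the substretch, so that $C_{k:l}(\bm u) = \Gb_{k:l}(\hat v_1,\dots,\hat v_d)$ with $\hat v_j = G_{k:l,j}^{-1}(u_j)$ and $\Gb_{k:l}$ the joint empirical d.f.\ of $\bm U_k,\dots,\bm U_l$. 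I then split
$$
\Cb_n(s,t,\bm u) = \sqrt n\,\lambda_n(s,t)\{\Gb_{k:l}(\hat{\bm v}) - C(\hat{\bm v})\} + \sqrt n\,\lambda_n(s,t)\{C(\hat{\bm v}) - C(\bm u)\}.
$$
The first summand is the uniform sequential process $\Bb_n$ evaluated at the random argument $\hat{\bm v}$; since $\hat v_j \to u_j$ and $\Bb_n$ is asymptotically equicontinuous (a consequence of Condition~\ref{cond:Bn}), it is $\Bb_n(s,t,\bm u) + o_\Pr(1)$ uniformly. The second summand is handled by a first-order Taylor expansion of $C$ under Condition~\ref{cond:pd}, giving $\sum_{j=1}^d \dot C_j(\bm u)\,\sqrt n\,\lambda_n(s,t)(\hat v_j - u_j)$; the Bahadur-type asymptotic inverse relation for the marginal empirical quantile process turns $\sqrt n\,\lambda_n(s,t)(\hat v_j - u_j)$ into $-\Bb_n(s,t,\bm u^{(j)}) + o_\Pr(1)$, because $\sqrt n\,\lambda_n(s,t)\{G_{k:l,j}(u_j) - u_j\} = \Bb_n(s,t,\bm u^{(j)})$. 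Collecting the pieces yields $\tilde\Cb_n$. The step I expect to be the real obstacle is making all of these approximations uniform over $\Lambda$: on short substretches (where $\ip{nt} - \ip{ns}$ is small) the prefactor $\lambda_n(s,t)$ degenerates and the equicontinuity and quantile arguments must be controlled by the boundary behaviour of the limiting process, while near $\partial [0,1]^d$ the Taylor remainder must be dominated using the $1$-Lipschitz property of $C$ together with the convention $\dot C_j \equiv 0$ on $\{u_j \in \{0,1\}\}$. This uniform bookkeeping is precisely what the cited references carry out and what I would import rather than redo.

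For the weak-convergence conclusion I would argue as follows. Define the linear map $\Phi$ sending $f \in \ell^\infty(\Lambda \times [0,1]^d)$ to the function $(s,t,\bm u) \mapsto f(s,t,\bm u) - \sum_{j=1}^d \dot C_j(\bm u)\, f(s,t,\bm u^{(j)})$. Since each $\dot C_j$ satisfies $0 \le \dot C_j \le 1$, the operator $\Phi$ is bounded, with $\|\Phi f\|_\infty \le (1+d)\|f\|_\infty$, hence continuous on all of $\ell^\infty(\Lambda \times [0,1]^d)$. Under Condition~\ref{cond:Bn} the excerpt already records $\Bb_n \leadsto \Bb_C$ in $\ell^\infty(\Lambda \times [0,1]^d)$, so the continuous mapping theorem gives $\tilde\Cb_n = \Phi(\Bb_n) \leadsto \Phi(\Bb_C) = \Cb_C$, with $\Cb_C$ exactly as in \eqref{eq:Cb:C}. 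Combining this with the representation $\sup|\Cb_n - \tilde\Cb_n| = o_\Pr(1)$ and Slutsky's lemma delivers $\Cb_n \leadsto \Cb_C$, which completes the proof.
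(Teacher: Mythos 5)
Your proposal is correct and follows essentially the same route as the paper: the paper gives no separate proof but declares the theorem an immediate consequence of Theorem~3.4 in \cite{BucKoj16} and Proposition~3.3 of \cite{BucKojRohSeg14}, which is exactly the representation you import, and your continuous-mapping/Slutsky step for passing from $\tilde\Cb_n$ to the weak limit is the standard ``consequently'' argument the paper relies on (using $\Bb_n \leadsto \Bb_C$ and $0 \le \dot C_j \le 1$). The additional mechanism you sketch (rank transform, Taylor expansion, Bahadur-type inversion, and the uniformity issues on short substretches and near the boundary) accurately describes what the cited references do, so deferring that bookkeeping to them is legitimate and matches the paper's intent.
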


We end this section with the statement of a corollary of Theorems~\ref{thm:Cb:n:nu} and~\ref{thm:wc:Cb:n}. Having~\eqref{eq:K} in mind, two natural smooth extensions of the unobservable empirical process $\Bb_n$ in~\eqref{eq:Bb:n} can be defined, for any $(s, t,\bm u) \in \Lambda \times [0, 1]^d$, by
\begin{align}
  \label{eq:tilde:Bb:n:nu}
  \tilde \Bb_n^\nu(s, t, \bm u) &= \frac{1}{\sqrt{n}} \sum_{i=\ip{ns}+1}^{\ip{nt}} \left\{ \int_{[0,1]^d} \1(\bm U_i \leq \bm{w}) \dd \nu_{\bm u}^{\sss \Xc_{\ip{ns}+1:\ip{nt}}}(\bm w) - C(\bm u) \right\}, \\
  \label{eq:bar:Bb:n:nu}
  \bar \Bb_n^\nu(s, t, \bm u) &= \frac{1}{\sqrt{n}} \sum_{i=\ip{ns}+1}^{\ip{nt}} \left\{ \int_{[0,1]^d} \1(\bm U_i \leq \bm{w}) \dd \nu_{\bm u}^{\sss \Xc_{1:n}}(\bm w) - C(\bm u) \right\}.
\end{align}
Combining Theorem~\ref{thm:wc:Cb:n} with key intermediate results used in \cite{KojYi22} for proving Theorem~\ref{thm:Cb:n:nu} stated above, we obtain the following asymptotic representations for the smooth sequential empirical process $\Cb_n^\nu$ in~\eqref{eq:Cb:n:nu}. The proof of this result is given in Appendix~\ref{proof:cor:wc:Cb:n:nu}.

\begin{cor}[Asymptotic representations of $\Cb_n^\nu$]
 \label{cor:wc:Cb:n:nu}
Under Conditions~\ref{cond:pd},~\ref{cond:var:W} and~\ref{cond:Bn},
\begin{align*}
   \sup_{(s, t,\bm u)\in \Lambda \times [0, 1]^d} \left| \Cb_n^\nu(s, t, \bm u) - \tilde \Cb_n^\nu(s, t, \bm u) \right| &= o_\Pr(1), \\
   \sup_{(s, t,\bm u)\in \Lambda \times [0, 1]^d} \left| \Cb_n^\nu(s, t, \bm u) - \bar \Cb_n^\nu(s, t, \bm u) \right| &= o_\Pr(1),
\end{align*}
 where, for any $(s, t,\bm u) \in \Lambda \times [0, 1]^d$,
\begin{align*}
   \tilde \Cb_n^\nu(s, t, \bm u) &= \tilde \Bb_n^\nu(s, t, \bm u) - \sum_{j=1}^d \dot C_j(\bm u) \, \tilde \Bb_n^\nu(s, t, \bm u^{(j)}), \\
 \bar \Cb_n^\nu(s, t, \bm u) &= \bar \Bb_n^\nu(s, t, \bm u) - \sum_{j=1}^d \dot C_j(\bm u) \, \bar \Bb_n^\nu(s, t, \bm u^{(j)}).
\end{align*}
\end{cor}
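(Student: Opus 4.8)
The plan is to combine the two asymptotic-equivalence results already at our disposal, namely Theorem~\ref{thm:Cb:n:nu} and Theorem~\ref{thm:wc:Cb:n}, with the smoothing-discrepancy estimates that underlie the proof of Theorem~\ref{thm:Cb:n:nu} in \cite{KojYi22}. First I would check that both theorems are applicable under Conditions~\ref{cond:pd}, \ref{cond:var:W} and~\ref{cond:Bn}: by Theorem~\ref{thm:wc:Cb:n}, Condition~\ref{cond:Bn} yields $\Cb_n \leadsto \Cb_C$ with $\Cb_C$ as in~\eqref{eq:Cb:C}, a limit whose trajectories are almost surely continuous because $\Bb_C$ in~\eqref{eq:Bb:C} is; this is precisely the hypothesis required by Theorem~\ref{thm:Cb:n:nu}. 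Hence
\[
\sup_{(s,t,\bm u)} \left| \Cb_n^\nu(s,t,\bm u) - \Cb_n(s,t,\bm u) \right| = o_\Pr(1)
\quad\text{and}\quad
\sup_{(s,t,\bm u)} \left| \Cb_n(s,t,\bm u) - \tilde \Cb_n(s,t,\bm u) \right| = o_\Pr(1),
\]
with $\tilde \Cb_n$ as in~\eqref{eq:Cb:n:tilde}, so that the triangle inequality gives $\sup | \Cb_n^\nu - \tilde \Cb_n | = o_\Pr(1)$. It therefore suffices to show that $\tilde \Cb_n$ is uniformly close to each of $\tilde \Cb_n^\nu$ and $\bar \Cb_n^\nu$.

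The second step reduces this to a statement at the level of the uniform sequential empirical process. Writing out the definitions,
\[
\tilde \Cb_n(s,t,\bm u) - \tilde \Cb_n^\nu(s,t,\bm u) = \{\Bb_n - \tilde \Bb_n^\nu\}(s,t,\bm u) - \sum_{j=1}^d \dot C_j(\bm u)\, \{\Bb_n - \tilde \Bb_n^\nu\}(s,t,\bm u^{(j)}),
\]
and likewise with $\bar \Bb_n^\nu$ in place of $\tilde \Bb_n^\nu$. Since the first-order partial derivatives of a copula satisfy $0 \le \dot C_j \le 1$ and the sum ranges over the finitely many indices $j \in \{1,\dots,d\}$, each such difference is bounded in supremum norm by $(1+d)\,\sup_{(s,t,\bm v)} | \Bb_n(s,t,\bm v) - \tilde \Bb_n^\nu(s,t,\bm v) |$ (resp.\ its analogue for $\bar \Bb_n^\nu$). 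The corollary will thus follow once we establish
\[
\sup_{(s,t,\bm u)} \left| \Bb_n(s,t,\bm u) - \tilde \Bb_n^\nu(s,t,\bm u) \right| = o_\Pr(1)
\quad\text{and}\quad
\sup_{(s,t,\bm u)} \left| \Bb_n(s,t,\bm u) - \bar \Bb_n^\nu(s,t,\bm u) \right| = o_\Pr(1).
\]

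These two smoothing-discrepancy bounds are exactly the intermediate estimates obtained while proving Theorem~\ref{thm:Cb:n:nu} in \cite{KojYi22}, which I would invoke directly (equivalently, re-run their argument). The mechanism is transparent: with $k = \ip{ns}+1$ and $l = \ip{nt}$,
\[
\{\Bb_n - \tilde \Bb_n^\nu\}(s,t,\bm u) = \frac{1}{\sqrt n} \sum_{i=k}^{l} \left\{ \1(\bm U_i \le \bm u) - \int_{[0,1]^d} \1(\bm U_i \le \bm w)\, \dd \nu_{\bm u}^{\sss \Xc_{k:l}}(\bm w) \right\},
\]
so the integral merely replaces the raw indicator by its average against a smoothing law centred at $\bm u$ with coordinatewise variance at most $\kappa u_j(1-u_j)/(l-k+1)$ by Condition~\ref{cond:var:W}; the discrepancy is then controlled by combining this shrinking-bandwidth property with the asymptotic equicontinuity of the sequential empirical process supplied by Condition~\ref{cond:Bn}. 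The full-sample version $\bar \Bb_n^\nu$ uses $\nu_{\bm u}^{\sss \Xc_{1:n}}$, whose variance is uniformly of order $1/n$, and is handled identically and in fact more easily. I expect the main obstacle to be precisely this uniformity over the sequential index: when $l-k+1$ is small the bandwidth of $\nu_{\bm u}^{\sss \Xc_{k:l}}$ is large, so negligibility of the smoothing cannot be read off from a fixed-bandwidth argument but must be secured simultaneously over all $(s,t) \in \Lambda$ — the delicate point already resolved in \cite{KojYi22}.
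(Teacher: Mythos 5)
Your proposal is correct and follows essentially the same route as the paper: first use Theorems~\ref{thm:Cb:n:nu} and~\ref{thm:wc:Cb:n} with the triangle inequality to get $\sup|\Cb_n^\nu - \tilde\Cb_n| = o_\Pr(1)$, then reduce the claim to $\sup|\Bb_n - \tilde\Bb_n^\nu| = o_\Pr(1)$ (and its $\bar\Bb_n^\nu$ analogue) via the bound $0 \le \dot C_j \le 1$, and finally obtain these discrepancy bounds from the intermediate estimates of \cite{KojYi22}. The only presentational difference is that the paper makes that last step explicit by splitting $\Bb_n - \tilde\Bb_n^\nu$ into a stochastic-equicontinuity part (Lemma~\ref{lem:stochastic}) and a deterministic bias part $\sqrt{n}\lambda_n(s,t)\{\int C\,\dd\nu_{\bm u} - C(\bm u)\}$ (Lemma~\ref{lem:bias}, which is where Condition~\ref{cond:pd} enters), whereas you fold both into a single invocation of the same results.
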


\begin{remark}
  \label{rem:conv:rate}
  The previous results do not unfortunately allow us to decide which of the above two asymptotic representations for $\Cb_n^\nu$ may be better. The knowledge of the underlying convergence rates would be needed for that. As we shall see in Section~\ref{sec:mult}, these representations will be at the root of smooth proposals for bootstrapping $\Cb_n^\nu$ in a time series context.
\end{remark}

\section{Bootstrap by drawing samples from the estimators in the i.i.d.\ case}
\label{sec:boot}

The aim of this section is to study both theoretically and empirically a smooth bootstrap \emph{à la} \citet{KirSegTsu21} based on drawing samples from the smooth estimators defined in the previous section. As hinted at in the introduction, such an approach can only be used in the i.i.d.\ case. Throughout this section, we thus assume that the random vectors in $\Xc_{1:n}$ are i.i.d. Notice that the latter implies Condition~\ref{cond:no:ties}. Given that change-point analysis is essentially of interest in the time series setting, we do not consider a sequential setting below but instead focus only on the situation where $k=1$ and~$l=n$.

This section is organized as follows. After describing the sampling algorithm on which the smooth bootstrap is based, we state conditions under which it is asymptotically valid and report results of finite-sample experiments comparing smooth bootstraps based on the empirical beta copula $C_{1:n}^{\sss \Bin}$ and on its data-adaptive extension $C_{1:n}^{\sss \BetaB}$ proposed in \cite{KojYi22} and recalled at the end of Section~\ref{sec:ec}. 

\subsection{Drawing samples from the smooth empirical copulas}

As explained in Section~\ref{sec:ec}, the empirical beta copula $C_{1:n}^{\sss \Bin}$ is a particular case of the smooth estimators $C_{1:n}^\nu$ defined in~\eqref{eq:C:kl:nu}. From \cite{SegSibTsu17} (see also Lemma~1 in \citealt{KojYi22}), one has that
\begin{equation}
\label{eq:C:1n:beta}
C_{1:n}^{\sss \Bin}(\bm{u}) = \frac{1}{n} \sum_{i=1}^n \prod_{j=1}^d F_{n,R_{ij}^{1:n}}(u_j), \qquad \bm u = (u_1,\dots,u_d) \in [0,1]^d,
\end{equation}
where, for any $n \in \N$ and $r \in \{1,\dots,n\}$, $F_{n,r}$ denotes the d.f.\ of the beta distribution with shape parameters $\alpha = r$ and $\beta = n+1-r$. It follows from~\eqref{eq:C:1n:beta} that $C_{1:n}^{\sss \Bin}$ is a mixture of $n$ $d$-dimensional distributions having beta margins and whose copula is the independence copula. To generate one random variate from $C_{1:n}^{\sss \Bin}$, it thus suffices to randomly select one of the $n$ components of the mixture by drawing a uniform on $\{1,\dots,n\}$ and then generate one random variate from the selected $d$-dimensional distribution. This is detailed in Algorithm~3.2 of \cite{KirSegTsu21}.

In a related way, having~\eqref{eq:C:kl:K} in mind, it thus suffices to assume the following to be able sample from $C_{1:n}^\nu$.

\begin{cond}($C_{1:n}^\nu$ is a mixture)
  \label{cond:mixture}
For any $n \in \N$, $\bm x \in (\R^d)^n$ and $\bm r \in \{1,\dots,n\}^d$, $\Kc_{\bm r}^{\bm x}$ in~\eqref{eq:K} is a d.f. on $[0,1]^d$.
\end{cond}

The sampling algorithm is then conceptually the same as Algorithm~3.2 of \cite{KirSegTsu21}.

\begin{algo}(Sampling from $C_{1:n}^\nu$ under Condition~\ref{cond:mixture})
  \label{algo:sampling}
  \begin{enumerate}
  \item Generate $I$ from the discrete uniform distribution on $\{1,\dots,n\}$.
  \item Generate a random variate $\bm V^{\sss \#}$ from a $d$-dimensional distribution whose d.f.\ is $\Kc_{\bm R_I^{1:n}}^{\sss \Xc_{1:n}}$.
  \end{enumerate}
\end{algo}

The above algorithm can be used in practice as soon as one knows how to sample from the d.f.s $\Kc_{\bm R_i^{1:n}}^{\sss \Xc_{1:n}}$, $i \in \{1,\dots,n\}$.

Interestingly enough, three of the conditions stated in Section~\ref{sec:ec} imply Condition~\ref{cond:mixture} as shown in the next result proven in Appendix~\ref{proofs:boot}. 

\begin{prop}
  \label{prop:mixture}
  Conditions~\ref{cond:unif:marg}, \ref{cond:smooth:surv:marg} and~\ref{cond:smooth:cop} imply Condition~\ref{cond:mixture}. Specifically, under Conditions~\ref{cond:unif:marg}, \ref{cond:smooth:surv:marg} and \ref{cond:smooth:cop}, for any $n \in \N$, $\bm x \in (\R^d)^n$ and $\bm r \in \{1,\dots,n\}^d$, $\Kc_{\bm r}^{\bm x}$ in~\eqref{eq:K} is a d.f. on $[0,1]^d$ whose $d$ univariate margins, denoted by $\Kc_{r_1,1}^{\bm x},\dots,\Kc_{r_d,d}^{\bm x}$, respectively, satisfy $\Kc_{r_j,j}^{\bm x}(u) = \bar \Fc_{j,u}^{\bm x}\{ (r_j - 1) / n\}$, $u \in [0,1]$, $j \in \{1,\dots,d\}$, and whose copula is $\bar \Cc^{\bm x}$.
\end{prop}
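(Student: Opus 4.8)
The plan is to reduce the statement to the ``if'' direction of Sklar's theorem via the already-available representation~\eqref{eq:K:dis}. Fix $n \in \N$, $\bm x \in (\R^d)^n$ and $\bm r \in \{1,\dots,n\}^d$, so that the integer $m$ attached to $\bm x$ in~\eqref{eq:K} equals $n$. Under Condition~\ref{cond:unif:marg}, representation~\eqref{eq:K:dis} gives, for every $\bm u \in [0,1]^d$,
\[
\Kc_{\bm r}^{\bm x}(\bm u) = \bar \Cc_{\bm u}^{\bm x}\left[ \bar \Fc_{1,u_1}^{\bm x}\{(r_1-1)/n\}, \dots, \bar \Fc_{d,u_d}^{\bm x}\{(r_d-1)/n\} \right],
\]
and Condition~\ref{cond:smooth:cop} lets me replace $\bar \Cc_{\bm u}^{\bm x}$ by a fixed copula $\bar \Cc^{\bm x}$. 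Writing $G_j(u) = \bar \Fc_{j,u}^{\bm x}\{(r_j-1)/n\}$ for $u \in [0,1]$ and $j \in \{1,\dots,d\}$, the claim becomes that $\bm u \mapsto \bar \Cc^{\bm x}(G_1(u_1),\dots,G_d(u_d))$ is a genuine d.f.\ on $[0,1]^d$ with margins $G_1,\dots,G_d$ and copula $\bar \Cc^{\bm x}$.

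The first step is to check that each $G_j$ is a univariate d.f.\ on $[0,1]$. Since $r_j \in \{1,\dots,n\}$, the fixed argument $w = (r_j-1)/n$ lies in $[0,1)$, so Condition~\ref{cond:smooth:surv:marg} applies and directly gives that $u \mapsto G_j(u)$ is right-continuous and nondecreasing on $[0,1]$. For the boundary values I would invoke the defining property that $\bm W_{\bm u}^{\bm x}$ is $[0,1]^d$-valued with mean $\bm u$: at $u_j = 0$, nonnegativity and a vanishing mean force $W_{j,0}^{\bm x} = 0$ almost surely, so $G_j(0) = \Pr(W_{j,0}^{\bm x} > (r_j-1)/n) = 0$ because $(r_j-1)/n \ge 0$; symmetrically, at $u_j = 1$ the constraint $W_{j,1}^{\bm x} \le 1$ together with unit mean force $W_{j,1}^{\bm x} = 1$ almost surely, so $G_j(1) = \Pr(1 > (r_j-1)/n) = 1$ because $(r_j-1)/n \le (n-1)/n < 1$. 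Hence $G_j$ is the d.f.\ of a $[0,1]$-valued random variable.

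The second step is the easy direction of Sklar's theorem: given a copula $\bar \Cc^{\bm x}$ and univariate d.f.s $G_1,\dots,G_d$, the composition $\bm u \mapsto \bar \Cc^{\bm x}(G_1(u_1),\dots,G_d(u_d))$ is a genuine multivariate d.f.; evaluating its $j$th margin by setting the remaining arguments to $1$ and using the uniformity of the copula margins yields $\bar \Cc^{\bm x}(1,\dots,1,G_j(u_j),1,\dots,1) = G_j(u_j)$, and $\bar \Cc^{\bm x}$ is an associated copula by construction. Because the $G_j$ are d.f.s of $[0,1]$-valued variables, the resulting d.f.\ is supported on $[0,1]^d$, which is precisely Condition~\ref{cond:mixture}, with margins $\Kc_{r_j,j}^{\bm x} = G_j = \bar \Fc_{j,\cdot}^{\bm x}\{(r_j-1)/n\}$.

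I expect the only nonroutine point to be the determination of the boundary values $G_j(0) = 0$ and $G_j(1) = 1$, which is where the mean-$\bm u$ and $[0,1]^d$-valued structure of the smoothing vectors must be used; monotonicity and right-continuity are supplied verbatim by Condition~\ref{cond:smooth:surv:marg}, and the composition step is standard. One subtlety worth recording is that, since the margins $G_j$ are generally discrete, the copula in Sklar's representation is not unique, so the assertion is that $\bar \Cc^{\bm x}$ is \emph{an} associated copula of $\Kc_{\bm r}^{\bm x}$ rather than a uniquely determined one.
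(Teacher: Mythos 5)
Your proof is correct and follows essentially the same route as the paper's: verify via Conditions~\ref{cond:unif:marg} and~\ref{cond:smooth:surv:marg} that each margin $u \mapsto \bar \Fc_{j,u}^{\bm x}\{(r_j-1)/n\}$ is a univariate d.f.\ on $[0,1]$ (with the boundary values forced by the $[0,1]$-valued, mean-$u_j$ structure of $W_{j,u_j}^{\bm x}$), then use Condition~\ref{cond:smooth:cop} to reduce~\eqref{eq:K:dis} to a Sklar-type composition $\bar \Cc^{\bm x}\{\Kc_{r_1,1}^{\bm x}(u_1),\dots,\Kc_{r_d,d}^{\bm x}(u_d)\}$ and invoke the converse direction of Sklar's theorem. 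Your explicit justification of the boundary values and your remark that $\bar \Cc^{\bm x}$ is \emph{an} (not the unique) copula of the discrete d.f.\ $\Kc_{\bm r}^{\bm x}$ only make explicit what the paper leaves implicit.
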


\begin{remark}
The previous result leads to an alternative (and simpler) proof of Proposition~11 of \cite{KojYi22}. Indeed, under the assumptions of Proposition~\ref{prop:mixture}, $C_{1:n}^\nu$ in \eqref{eq:C:kl:K} is a convex combination of multivariate d.f.s on $[0,1]^d$ and therefore a multivariate d.f.\ on $[0,1]^d$. Since Condition~\ref{cond:no:ties} holds in the current i.i.d.\ setting, from Section~3.1 in \cite{KojYi22}, Condition~\ref{cond:unif:marg} also implies that $C_{1:n}^\nu$ has standard uniform margins. Hence, under the assumptions of Proposition~\ref{prop:mixture}, $C_{1:n}^\nu$ is a genuine copula.
\end{remark}

For any univariate d.f.\ $H$, let $H^{-1}$ denote its associated quantile function (generalized inverse) defined by $H^{-1}(y) = \inf\{x \in \R : H(x) \geq y \}$, $y \in [0,1]$, with the convention that $\inf \emptyset = \infty$. The second step of Algorithm~\ref{algo:sampling} can then be made explicit under Conditions~\ref{cond:unif:marg}, \ref{cond:smooth:surv:marg} and \ref{cond:smooth:cop} :
\begin{enumerate}[(i)]
\item Generate a random variate $\bm U^{\sss \#}$ from the copula $\bar \Cc^{\sss \Xc_{1:n}}$ independently of $I$.
\item A random variate from the distribution whose d.f.\ is $\Kc_{\bm R_I^{1:n}}^{\sss \Xc_{1:n}}$ is then
  \begin{equation}
    \label{eq:V:hash}
  \bm V^{\sss \#} = \Big(\Kc_{R_{I1}^{1:n},1}^{\sss \Xc_{1:n},-1}(U_1^{\sss \#}),\dots, \Kc_{R_{Id}^{1:n},d}^{\sss \Xc_{1:n},-1}(U_d^{\sss \#})\Big).
  \end{equation}
\end{enumerate}

We end this section by discussing how Algorithm~\ref{algo:sampling} can be practically implemented for the smooth data-adaptive estimator $C_{1:n}^{\sss \BetaB}$ introduced in \cite{KojYi22} as a possible improvement of the empirical beta copula $C_{1:n}^{\sss \Bin}$. Recall from Section~\ref{sec:ec} that $C_{1:n}^{\sss \BetaB}$ is of the form~\eqref{eq:C:kl:dis} with $\bar \Cc^{\sss \Xc_{1:n}}$ the empirical beta copula $C_{1:n}^{\sss \Bin}$ and, for any $j \in \{1,\dots,d\}$ and $u \in [0,1]$, $\bar \Fc_{j,u}^{\sss \Xc_{1:n}}$ the survival function of a scaled (by $1/n$) Beta-Binomial$(n,\alpha, \beta)$ random variable, where  $\alpha = u(n - \rho)/(\rho - 1)$, $\beta = (1-u)(n - \rho)/(\rho - 1)$ and $\rho=4$. The latter implies that, for any $i \in \{1,\dots,n\}$, $j \in \{1,\dots,d\}$ and $u \in [0,1]$,
\begin{equation}
  \label{eq:Kc:r:j}
  \Kc_{R_{ij}^{1:n},j}^{\sss \Xc_{1:n}}(u) = \bar \Fc_{j,u}^{\sss \Xc_{1:n}}\{ (R_{ij}^{1:n} - 1) / n\} = \Pr(n W_{j,u}^{\sss \Xc_{1:n}} > R_{ij}^{1:n} - 1) = \bar \Bc_{n,u,\rho}(R_{ij}^{1:n} - 1), 
\end{equation}
where $\bar \Bc_{n,u,\rho}$ is the survival function of the Beta-Binomial$(n,\alpha, \beta)$. As can be checked from Lemma~27 in \cite{KojYi22} and Lemma~\ref{lem:barBc:strict} in Appendix~\ref{proofs:boot}, the univariate d.f.\ $\Kc_{R_{ij}^{1:n},j}^{\sss \Xc_{1:n}}$ in~\eqref{eq:Kc:r:j} is continuous and strictly increasing, respectively. Hence, to compute its associated quantile function needed in~\eqref{eq:V:hash}, one can proceed numerically. In that respect, an implementation of Algorithm~\ref{algo:sampling} for the \textsf{R} statistical environment for the estimators $C_{1:n}^{\sss \Bin}$ and $C_{1:n}^{\sss \BetaB}$ is available on the web page of the first author.

\subsection{Asymptotic validity results}

Building upon the work of \cite{KirSegTsu21}, we will now provide asymptotic validity results for a smooth bootstrap based on drawing samples from $C_{1:n}^\nu$ in \eqref{eq:C:kl:K} under Conditions~\ref{cond:unif:marg}, \ref{cond:smooth:surv:marg} and~\ref{cond:smooth:cop}. Recall that, according to Proposition~\ref{prop:mixture}, the latter conditions imply Condition~\ref{cond:mixture}. Let $\Vc_{1:n}^{\sss \#} = (\bm V_1^{\sss \#}, \dots, \bm V_n^{\sss \#})$ be a random sample from $C_{1:n}^\nu$ obtained by applying Algorithm~\ref{algo:sampling} $n$ times independently. Note that this implies that the component samples of $\Vc_{1:n}^{\sss \#}$ do not contain ties with probability~1. For any $j \in \{1,\dots,d\}$, let $G_{1:n,j}^{\sss \#}$ be the empirical d.f.\ computed from the $j$th component sample $V_{1j}^{\sss \#},\dots,V_{nj}^{\sss \#}$ of $\Vc_{1:n}^{\sss \#}$. Then, ${R_{ij}^{1:n,\#}} = n G_{1:n,j}^{\sss \#}(V_{ij}^{\sss \#})$ is the rank of $V_{ij}^{\sss \#}$ among $V_{1j}^{\sss \#},\dots,V_{nj}^{\sss \#}$. The (classical) empirical copula of $\Vc_{1:n}^{\sss \#}$ is thus given by
\begin{equation}
  \label{eq:C:1n:hash}
C_{1:n}^{\sss \#}(\bm u) = \frac{1}{n} \sum_{i=1}^n  \prod_{j=1}^d \1\left( \frac{R_{ij}^{1:n,\#}}{n} \leq u_j \right), \qquad \bm u \in [0,1]^d,
\end{equation}
and the smooth analog of $C_{1:n}^\nu$ for $\Vc_{1:n}^{\sss \#}$ is 
\begin{equation}
  \label{eq:C:1n:hash:nu}
C_{1:n}^{\sss \#,\nu}(\bm u) = \int_{[0,1]^d} C_{1:n}^{\sss \#}(\bm w) \dd \nu_{\bm u}^{\sss \Vc_{1:n}^{\sss \#}}(\bm w), \qquad \bm u \in [0,1]^d.
\end{equation}

To state our asymptotic validity results, we consider independent copies $\Vc_{1:n}^{\sss \#,[1]}, \Vc_{1:n}^{\sss \#,[2]}, \dots$ of $\Vc_{1:n}^{\sss \#}$. Let $C_{1:n}^{\sss \#,[i]}$ (resp.\ $C_{1:n}^{\sss \#, \nu, [i]}$) be the version of $C_{1:n}^{\sss \#}$ in~\eqref{eq:C:1n:hash} (resp.\ $C_{1:n}^{\sss \#, \nu}$ in~\eqref{eq:C:1n:hash:nu}) obtained from $\Vc_{1:n}^{\sss \#,[i]}$, $i \in \N$. 

The following result can be regarded as an extension of Proposition~3.3 of \cite{KirSegTsu21} and is proven in Appendix~\ref{proof:thm:asym:val:Cb:n:hash}. 

\begin{thm}
\label{thm:asym:val:Cb:n:hash}
Assume that the random vectors in $\Xc_{1:n}$ are i.i.d., and that Conditions~\ref{cond:unif:marg}, \ref{cond:smooth:surv:marg}, \ref{cond:smooth:cop}, \ref{cond:pd} and~\ref{cond:var:W} hold. Then,
\begin{align*}
  \big(\Cb_n(0,1,\cdot), \sqrt{n}(C_{1:n}^{\sss \#,[1]} - C_{1:n}), &\sqrt{n}(C_{1:n}^{\sss \#,[2]} - C_{1:n}) \big) \\&\leadsto \big(\Cb_C(0,1,\cdot),\Cb_C^{\sss [1]}(0,1,\cdot),\Cb_C^{\sss [2]}(0,1,\cdot)\big), \\
  \big(\Cb_n^\nu(0,1,\cdot), \sqrt{n}(C_{1:n}^{\sss \#,\nu,[1]} - C_{1:n}^\nu), &\sqrt{n}(C_{1:n}^{\sss \#,\nu,[2]} - C_{1:n}^\nu) \big) \\
  &\leadsto \big(\Cb_C(0,1,\cdot),\Cb_C^{\sss [1]}(0,1,\cdot),\Cb_C^{\sss [2]}(0,1,\cdot)\big) 
\end{align*}
in $\{ \ell^\infty([0,1]^d) \}^3$, where $\Cb_n$ and $\Cb_n^\nu$ are defined in~\eqref{eq:Cb:n} and~\eqref{eq:Cb:n:nu}, respectively, and $\Cb_C^{\sss [1]}$ and $\Cb_C^{\sss [2]}$ are independent copies of $\Cb_C$ defined in~\eqref{eq:Cb:C}.
\end{thm}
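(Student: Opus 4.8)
The plan is to prove the \emph{non-smooth} statement (the first display) and then deduce the \emph{smooth} one (the second display) from it by means of Theorem~\ref{thm:Cb:n:nu}. Throughout, I would condition on the original sample $\Xc_{1:n}$ and exploit the product structure of $(\Omega,\Ac,\Pr)$ set up in Section~\ref{sec:ec}: given $\Xc_{1:n}$, the vectors forming $\Vc_{1:n}^{\sss \#,[1]}$ and $\Vc_{1:n}^{\sss \#,[2]}$ are mutually independent and each i.i.d.\ with distribution $C_{1:n}^\nu$, which is a genuine copula by Propositions~\ref{prop:genuine:copula} and~\ref{prop:mixture}. Since the observations are i.i.d., Condition~\ref{cond:Bn} holds, so Theorem~\ref{thm:wc:Cb:n} yields $\Cb_n(0,1,\cdot) \leadsto \Cb_C$, the first coordinate of the first display. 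Theorem~\ref{thm:Cb:n:nu} gives in addition $\sqrt n\,\|C_{1:n}^\nu - C_{1:n}\|_\infty = \|\Cb_n^\nu(0,1,\cdot) - \Cb_n(0,1,\cdot)\|_\infty = o_\Pr(1)$, which lets me replace the centering $C_{1:n}$ by $C_{1:n}^\nu$ in the bootstrap coordinates, i.e.\ $\sqrt n (C_{1:n}^{\sss \#,[i]} - C_{1:n}) = \sqrt n (C_{1:n}^{\sss \#,[i]} - C_{1:n}^\nu) + o_\Pr(1)$.

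The core step is the single-replicate conditional weak convergence in probability
\[
\sqrt n\,\big(C_{1:n}^{\sss \#} - C_{1:n}^\nu\big) \;\cwc\; \Cb_C(0,1,\cdot) \qquad \text{in } \ell^\infty([0,1]^d).
\]
Conditionally on $\Xc_{1:n}$, $C_{1:n}^{\sss \#}$ is the ordinary empirical copula of an i.i.d.\ sample of size $n$ from the \emph{fixed} genuine copula $C_{1:n}^\nu$, so I would invoke the empirical-copula-process linearization of \citet{Seg12} and \citet{BucKojRohSeg14}, writing $\sqrt n (C_{1:n}^{\sss \#} - C_{1:n}^\nu)$ as $\beta_n - \sum_{j=1}^d \dot C_{1:n,j}^\nu\,\beta_n^{(j)}$ plus a remainder, where $\beta_n$ is the multivariate (uniform-margin) empirical process of the bootstrap sample and $\dot C_{1:n,j}^\nu$ are the first-order partial derivatives of $C_{1:n}^\nu$. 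Establishing the convergence then requires (i) the conditional functional central limit theorem $\beta_n \cwc \Bb_C(0,1,\cdot)$, which follows from the uniform consistency $\|C_{1:n}^\nu - C\|_\infty \p 0$ (a by-product of the tightness of $\Cb_n^\nu$) via a triangular-array Donsker argument, and (ii) the uniform in-probability consistency $\dot C_{1:n,j}^\nu \to \dot C_j$ together with control of the linearization remainder.

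Granting the displayed conditional convergence, I would obtain the first display by invoking the equivalence between conditional (in probability) and joint unconditional weak convergence of bootstrap replicates, a device used in \citet{BucKoj16} and \citet{KirSegTsu21}. Since $\Cb_n(0,1,\cdot) \leadsto \Cb_C$ unconditionally, the two resampled samples are independent given $\Xc_{1:n}$, and each replicate converges conditionally in probability to the fixed law of $\Cb_C$, this device delivers
\[
\big(\Cb_n(0,1,\cdot),\ \sqrt n (C_{1:n}^{\sss \#,[1]} - C_{1:n}^\nu),\ \sqrt n (C_{1:n}^{\sss \#,[2]} - C_{1:n}^\nu)\big) \leadsto \big(\Cb_C, \Cb_C^{\sss [1]}, \Cb_C^{\sss [2]}\big)
\]
with $\Cb_C^{\sss [1]},\Cb_C^{\sss [2]}$ independent copies of $\Cb_C$; combined with the centering replacement from the first paragraph, this is exactly the first display.

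Finally, for the second display I would argue that smoothing the bootstrap empirical copula is asymptotically negligible, i.e.\ $\sqrt n\,\|C_{1:n}^{\sss \#,\nu,[i]} - C_{1:n}^{\sss \#,[i]}\|_\infty = o_\Pr(1)$; this is the conclusion of the argument underlying Theorem~\ref{thm:Cb:n:nu}, carried out conditionally on the bootstrap sample $\Vc_{1:n}^{\sss \#,[i]}$, which has no ties almost surely, with Condition~\ref{cond:var:W} bearing on the same smoothing distributions and limiting copula $C$ satisfying Condition~\ref{cond:pd}. Since $\sqrt n (C_{1:n}^{\sss \#,\nu,[i]} - C_{1:n}^\nu) = \sqrt n (C_{1:n}^{\sss \#,[i]} - C_{1:n}^\nu) + o_\Pr(1)$ and $\Cb_n^\nu(0,1,\cdot) = \Cb_n(0,1,\cdot) + o_\Pr(1)$, the second display follows from the first by the continuous mapping theorem. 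I expect the main obstacle to be step~(ii) above: making the linearization and the convergence of the partial derivatives of the \emph{random}, $n$-varying copula $C_{1:n}^\nu$ uniform and valid conditionally in probability, including near the faces of the unit cube --- precisely the behaviour analysed for the smooth estimators of the first-order partial derivatives of $C$ investigated in the final section. The remainder is an assembly of the results of Section~\ref{sec:asym} with standard bootstrap-convergence bookkeeping.
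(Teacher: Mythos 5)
Your overall architecture coincides with the paper's: prove a single-replicate conditional convergence, convert it to the joint unconditional statement (the paper uses Lemma~3.1 of \cite{BucKoj19}), swap the centering $C_{1:n}^\nu \leftrightarrow C_{1:n}$ via Theorem~\ref{thm:Cb:n:nu}, and then treat the smooth display as a perturbation of the non-smooth one. The genuine gap is in the core step, $\sqrt{n}(C_{1:n}^{\sss \#}-C_{1:n}^\nu)\cwc\Cb_C(0,1,\cdot)$, which is precisely what you leave unproven, and the route you sketch for it would fail. A Segers-type linearization carried out conditionally on $\Xc_{1:n}$ treats $C_{1:n}^\nu$ as the sampling copula, so it requires $C_{1:n}^\nu$ itself to satisfy Condition~\ref{cond:pd} and its partial derivatives $\dot C_{1:n,j}^\nu$ to exist, be continuous, and converge to $\dot C_j$. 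Under the hypotheses of the theorem none of this is available: Conditions~\ref{cond:unif:marg}, \ref{cond:smooth:surv:marg} and~\ref{cond:smooth:cop} only guarantee that $t \mapsto \bar \Fc_{j,t}^{\bm x}(w)$ is right-continuous and increasing, so $C_{1:n}^\nu$ need not be differentiable at all (its margins' dependence on $\bm u$ can be quite rough), and there is no basis for the consistency $\dot C_{1:n,j}^\nu \to \dot C_j$ you require in step~(ii). The paper's proof avoids derivatives of $C_{1:n}^\nu$ entirely: Lemma~\ref{lem:unif:as:C:1n:nu} gives $\sup_{\bm u}|C_{1:n}^\nu(\bm u)-C(\bm u)| \to 0$ a.s., which yields your step~(i), namely $\sqrt{n}(G_{1:n}^{\sss \#}-C_{1:n}^\nu)\cwc\Bb_C(0,1,\cdot)$ for the bootstrap empirical d.f.; then, instead of a linearization, it applies the Hadamard differentiability of the copula map $\Phi$ in~\eqref{eq:Phi} (Theorem~2.4 of \cite{BucVol13}) together with the functional delta method for the bootstrap ``in probability'' \citep[Theorem~3.9.11]{vanWel96} and the identity $\Phi(C_{1:n}^\nu)=C_{1:n}^\nu$. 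The point of that device is that differentiability is needed only at the \emph{limit} $C$ (Condition~\ref{cond:pd}), tangentially, and never at the random, $n$-varying copulas $C_{1:n}^\nu$. Your closing remark correctly identifies this obstacle, but the proposal contains no way around it; the delta-method step is the missing idea.

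A secondary instance of the same problem occurs in your treatment of the second display: invoking ``Theorem~\ref{thm:Cb:n:nu} conditionally on the bootstrap sample'' would, taken literally, produce a bias term $\sqrt{n}\sup_{\bm u}|\int C_{1:n}^\nu\,\dd\nu_{\bm u}^{\sss \Vc_{1:n}^{\sss \#,[i]}} - C_{1:n}^\nu(\bm u)|$ whose control again needs smoothness of $C_{1:n}^\nu$. The paper instead bounds $\sup_{\bm u}|\sqrt{n}\{C_{1:n}^{\sss \#,\nu,[j]}(\bm u)-C_{1:n}^\nu(\bm u)\}-\sqrt{n}\{C_{1:n}^{\sss \#,[j]}(\bm u)-C_{1:n}(\bm u)\}|$ by a decomposition in which every bias term is taken relative to $C$: the stochastic terms are handled by the equicontinuity argument of Lemma~\ref{lem:stochastic} (using that $\sqrt{n}(C_{1:n}^{\sss \#,[j]}-C_{1:n})$ converges weakly, by the already-proved first display, to a process with continuous trajectories), and the bias terms $\sqrt{n}\sup_{\bm u}|\int C\,\dd\nu_{\bm u}^{\sss \Vc_{1:n}^{\sss \#,[j]}}-C(\bm u)|$ and $\sqrt{n}\sup_{\bm u}|\int C\,\dd\nu_{\bm u}^{\sss \Xc_{1:n}}-C(\bm u)|$ are handled by Lemma~\ref{lem:bias}, which uses Conditions~\ref{cond:pd} and~\ref{cond:var:W} for $C$ only. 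Your structural reduction (centering swap plus negligibility of smoothing the bootstrap empirical copula) has the right shape, but to be valid it must be executed with all bias routed through $C$, as in the paper, rather than through the non-smooth random copula $C_{1:n}^\nu$.
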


\begin{remark}
The first joint weak convergence in Theorem~\ref{thm:asym:val:Cb:n:hash} establishes the asymptotic validity of a smooth bootstrap for the (non-sequential) classical empirical process while the second one provides a similar results for the smooth empirical copula process $\Cb_n^\nu(0,1,\cdot)$. According to Lemma~3.1 in \cite{BucKoj19}, these two joint weak convergences are equivalent to similar joint weak convergences with $B \geq 2$ bootstrap replicates. In a further step, the latter can be transferred to the ``statistic level'' using the continuous mapping theorem or the functional delta method, which could then be combined with the results in Section~4 of \cite{BucKoj19} to establish the validity of bootstrap-based confidence intervals or tests. Note also that, from Lemma~3.1 in \cite{BucKoj19}, the unconditional asymptotic validity results appearing in Theorem~\ref{thm:asym:val:Cb:n:hash} are equivalent to possibly more classical conditional results which rely, however, on a more subtle mode of convergence. For instance, the first claim can be equivalently informally stated as ``$\sqrt{n}(C_{1:n}^{\sss \#,[1]} - C_{1:n})$ converges weakly to $\Cb_C(0,1,\cdot)$ in $\ell^\infty([0,1]^d)$ conditionally on the data in probability''; see, e.g., \citet[Section~2.2.3]{Kos08} or Appendix~\ref{proof:thm:asym:val:Cb:n:hash} for a precise definition of that mode of convergence.
\end{remark}

\subsection{Finite-sample comparison of two smooth bootstraps} 
\label{sec:ci}

\begin{table}[t!]
\centering
\caption{Coverage probabilities (cov.) and average lengths (ave.) of 95\%-confidence intervals for Kendall's tau estimated from 1000 random samples of size $n \in \{20,40,80,160\}$ from the bivariate Clayton or Gumbel--Hougaard copula with a Kendall's tau of $\tau \in \{0,0.5,0.75,0.9\}$. Each confidence interval was computed using 1000 smooth bootstrap samples drawn from either $C_{1:n}^{\sss \Bin}$ or $C_{1:n}^{\sss \BetaB}$ using Algorithm~\ref{algo:sampling}.} 
\label{tab:kendall}
\begingroup\small
\begin{tabular}{rrrrrrrrrr}
  \hline
  \multicolumn{2}{c}{} & \multicolumn{4}{c}{Clayton} & \multicolumn{4}{c}{Gumbel--Hougaard} \\ \cmidrule(lr){3-6} \cmidrule(lr){7-10} \multicolumn{2}{c}{} & \multicolumn{2}{c}{Bin} & \multicolumn{2}{c}{BetaB4} & \multicolumn{2}{c}{Bin} & \multicolumn{2}{c}{BetaB4} \\ \cmidrule(lr){3-4} \cmidrule(lr){5-6} \cmidrule(lr){7-8} \cmidrule(lr){9-10}  $\tau$ & $n$ & cov. & ave. & cov. & ave. & cov. & ave. & cov. & ave.  \\ \hline
0.00 & 20 & 0.973 & 0.626 & 0.967 & 0.615 & 0.969 & 0.624 & 0.962 & 0.614 \\ 
   & 40 & 0.962 & 0.428 & 0.954 & 0.424 & 0.950 & 0.430 & 0.944 & 0.425 \\ 
   & 80 & 0.949 & 0.298 & 0.943 & 0.296 & 0.970 & 0.297 & 0.961 & 0.296 \\ 
   & 160 & 0.944 & 0.208 & 0.943 & 0.207 & 0.949 & 0.208 & 0.949 & 0.208 \\ 
  0.50 & 20 & 0.971 & 0.513 & 0.972 & 0.493 & 0.978 & 0.521 & 0.982 & 0.498 \\ 
   & 40 & 0.958 & 0.347 & 0.954 & 0.334 & 0.959 & 0.345 & 0.957 & 0.332 \\ 
   & 80 & 0.946 & 0.239 & 0.938 & 0.233 & 0.950 & 0.237 & 0.947 & 0.231 \\ 
   & 160 & 0.954 & 0.168 & 0.957 & 0.165 & 0.954 & 0.164 & 0.958 & 0.162 \\ 
  0.75 & 20 & 0.717 & 0.392 & 0.899 & 0.367 & 0.777 & 0.391 & 0.927 & 0.358 \\ 
   & 40 & 0.728 & 0.234 & 0.908 & 0.221 & 0.793 & 0.231 & 0.954 & 0.211 \\ 
   & 80 & 0.798 & 0.151 & 0.930 & 0.146 & 0.844 & 0.146 & 0.953 & 0.137 \\ 
   & 160 & 0.866 & 0.101 & 0.943 & 0.100 & 0.883 & 0.098 & 0.944 & 0.094 \\ 
  0.90 & 20 & 0.000 & 0.315 & 0.212 & 0.272 & 0.000 & 0.317 & 0.270 & 0.264 \\ 
   & 40 & 0.000 & 0.160 & 0.475 & 0.131 & 0.000 & 0.162 & 0.593 & 0.127 \\ 
   & 80 & 0.000 & 0.086 & 0.692 & 0.074 & 0.000 & 0.087 & 0.804 & 0.069 \\ 
   & 160 & 0.000 & 0.050 & 0.837 & 0.047 & 0.000 & 0.050 & 0.902 & 0.043 \\ 
   \hline
\end{tabular}
\endgroup
\end{table}

As already mentioned in the introduction, in their Monte Carlo experiments, \cite{KirSegTsu21} found the smooth bootstrap based on the empirical beta copula $C_{1:n}^{\sss \Bin}$ to be a competitive alternative to many other resampling schemes (including the multiplier bootstrap to be studied in the forthcoming section). Since the data-adaptive empirical copula $C_{1:n}^{\sss \BetaB}$ was found to outperform the empirical beta copula $C_{1:n}^{\sss \Bin}$ in the experiments reported in \cite{KojYi22}, it seems natural to empirically investigate how the smooth bootstrap based on $C_{1:n}^{\sss \BetaB}$ compares to the smooth bootstrap based on $C_{1:n}^{\sss \Bin}$. To do so, we reproduced some of the experiments reported in Sections 4.2 and 4.3 of \cite{KirSegTsu21}.

We first estimated coverage probabilities and average lengths of confidence intervals of level 95\% for Kendall’s tau from 1000 random samples of size $n \in \{20, 40, 80, 160\}$ from the bivariate Clayton or Gumbel–Hougaard copula with a Kendall’s tau of $\tau \in \{0, 0.5, 0.75, 0.9\}$. Each confidence interval was computed using 1000 smooth bootstrap samples drawn from either $C_{1:n}^{\sss \Bin}$ or $C_{1:n}^{\sss \BetaB}$. The results are reported in Table~\ref{tab:kendall}. As one can see, under independence or moderate dependence ($\tau \in \{0,0.5\}$), the estimated coverage probabilities are overall on target and very similar for the two resampling schemes. The intervals obtained using the smooth bootstrap based on $C_{1:n}^{\sss \BetaB}$ seem nonetheless to be slightly shorter on average. Under strong dependence ($\tau = 0.75$) however, the estimated coverage probabilities of the confidence intervals computed using the smooth bootstrap based on $C_{1:n}^{\sss \Bin}$ are substantially below the 0.95 target value. The results for $\tau = 0.9$ actually show that the smooth bootstrap based on $C_{1:n}^{\sss \Bin}$ is unable to generate samples with such a very strong dependence. While its results are not perfect, the smooth bootstrap based on $C_{1:n}^{\sss \BetaB}$ copes much better with strong dependence. This is likely to be due to the modification of the ``shape'' of the underlying smoothing distributions using the empirical beta copula in the expression of $C_{1:n}^{\sss \BetaB}$ as can be deduced from~\eqref{eq:C:kl:dis}.

\begin{table}[t!]
\centering
\caption{Coverage probabilities (cov.) and average lengths (ave.) of 95\%-confidence intervals for the parameter of a bivariate Frank copula estimated by maximum pseudo-likelihood from 1000 random samples of size $n \in \{20,40,80\}$ from the bivariate Frank copula with a Kendall's tau of $\tau \in \{-0.9,-0.75,-0.5,0,0.5,0.75,0.9\}$. Each confidence interval was computed using 1000 smooth bootstrap samples drawn from either $C_{1:n}^{\sss \Bin}$ or $C_{1:n}^{\sss \BetaB}$ using Algorithm~\ref{algo:sampling}.} 
\label{tab:mpl}
\begingroup\small
\begin{tabular}{rrrrrr}
  \hline
  \multicolumn{2}{c}{} & \multicolumn{2}{c}{Bin} & \multicolumn{2}{c}{BetaB4} \\ \cmidrule(lr){3-4} \cmidrule(lr){5-6} $\tau$ & $n$ & cov. & ave. & cov. & ave.  \\ \hline
-0.90 & 20 & 0.000 & 0.194 & 0.000 & 0.133 \\ 
   & 40 & 0.000 & 0.086 & 0.056 & 0.051 \\ 
   & 80 & 0.000 & 0.039 & 0.313 & 0.023 \\ 
  -0.75 & 20 & 0.731 & 0.286 & 0.940 & 0.237 \\ 
   & 40 & 0.641 & 0.153 & 0.939 & 0.126 \\ 
   & 80 & 0.662 & 0.088 & 0.947 & 0.077 \\ 
  -0.50 & 20 & 0.988 & 0.548 & 0.981 & 0.511 \\ 
   & 40 & 0.975 & 0.342 & 0.957 & 0.327 \\ 
   & 80 & 0.957 & 0.230 & 0.940 & 0.224 \\ 
  0.00 & 20 & 0.952 & 1.009 & 0.946 & 1.002 \\ 
   & 40 & 0.937 & 0.681 & 0.929 & 0.681 \\ 
   & 80 & 0.941 & 0.467 & 0.938 & 0.469 \\ 
  0.50 & 20 & 0.986 & 0.542 & 0.970 & 0.508 \\ 
   & 40 & 0.972 & 0.344 & 0.949 & 0.328 \\ 
   & 80 & 0.959 & 0.224 & 0.948 & 0.219 \\ 
  0.75 & 20 & 0.722 & 0.285 & 0.938 & 0.235 \\ 
   & 40 & 0.634 & 0.154 & 0.927 & 0.128 \\ 
   & 80 & 0.671 & 0.088 & 0.942 & 0.077 \\ 
  0.90 & 20 & 0.000 & 0.193 & 0.000 & 0.132 \\ 
   & 40 & 0.000 & 0.086 & 0.046 & 0.051 \\ 
   & 80 & 0.000 & 0.039 & 0.319 & 0.023 \\ 
   \hline
\end{tabular}
\endgroup
\end{table}

In a second experiment, following \citet[Section~4.3]{KirSegTsu21} we estimated coverage probabilities and average lengths of 95\%-confidence intervals for the parameter of a bivariate Frank copula estimated by maximum pseudo-likelihood \citep[see][]{GenGhoRiv95} from 1000 random samples of size $n \in \{20, 40, 80, 160\}$ from the bivariate Frank copula with a Kendall’s tau of $\tau \in \{-0.9,-0.75,-0.5,0,0.5,0.75,0.9\}$. Again, each confidence interval was computed using 1000 smooth bootstrap samples drawn from either $C_{1:n}^{\sss \Bin}$ or $C_{1:n}^{\sss \BetaB}$. The results are reported in Table~\ref{tab:mpl} and the main conclusion is qualitatively the same as for the previous experiment: the smooth bootstrap based on $C_{1:n}^{\sss \BetaB}$ copes much better with strong dependence than the smooth bootstrap based on $C_{1:n}^{\sss \Bin}$.

\section{Smooth sequential dependent multiplier bootstraps in the time series case}
\label{sec:mult}

The smooth bootstrap investigated in the previous section can only be used in the case of i.i.d.\ observations. Fortunately, the multiplier bootstrap, one of the most popular approaches for bootstrapping functionals of the classical empirical copula, can be employed in the time series setting. In this section, after providing some intuitions and defining \emph{multiplier sequences}, we recall the non-smooth sequential dependent multiplier bootstrap studied in \cite{BucKoj16}. We next propose smooth extensions of the latter, provide asymptotic validity results and compare the finite-sample performance of three (smooth) multiplier bootstraps for approximating three (smooth) empirical copula processes. Finally, as an application, we consider a smooth version (based on the empirical beta copula and corresponding smooth multiplier bootstrap replicates) of the test for change-point detection developed in \cite{BucKojRohSeg14} and we compare its finite-sample performance to that of its non-smooth counterpart.

\subsection{Main intuition and existing work}
\label{sec:intuition}

As mentioned in Section~\ref{sec:asym}, Condition~\ref{cond:Bn} holds under \emph{strong mixing}. Given a stationary time series $(\bm Y_i)_{i \in \Z}$, denote by $\Fc_j^k$ the $\sigma$-field generated by $(\bm Y_i)_{j \leq i \leq k}$, $j, k \in \Z \cup \{-\infty,+\infty \}$, and recall that the strong mixing coefficients corresponding to the stationary sequence $(\bm Y_i)_{i \in \Z}$ are then defined by
\begin{equation*}
\alpha_r^{\bm Y} = \sup_{A \in \Fc_{-\infty}^0,B\in \Fc_{r}^{+\infty}} \big| \Pr(A \cap B) - \Pr(A) \Pr(B) \big|, \qquad r \in \N, \, r > 0,
\end{equation*}
and that the sequence $(\bm Y_i)_{i \in \Z}$ is said to be \emph{strongly mixing} if $\alpha_r^{\bm Y} \to 0$ as $r \to \infty$.

From \cite{Buc15}, Condition~\ref{cond:Bn} holds 
if the strong mixing coefficients of the time series $(\bm X_i)_{i \in \Z}$ satisfy $\alpha_r^{\bm X} = O(r^{-a})$ with $a > 1$ as $r \to \infty$. In that case, Theorem~\ref{thm:wc:Cb:n} suggests that, in order to bootstrap the classical sequential empirical copula process $\Cb_n$ in~\eqref{eq:Cb:n} in an asymptotically valid way, it suffices to bootstrap the process $\tilde \Cb_n$ in~\eqref{eq:Cb:n:tilde}. The latter could be done by bootstrapping $\Bb_n$ in~\eqref{eq:Bb:n} and estimating the first-order partial derivatives $\dot{C}_j$, $j \in \{1,\dots,d\}$, of $C$. Such an approach was initially proposed in the independent non-sequential setting by \cite{Sca05} and \cite{RemSca09} who used a \emph{multiplier bootstrap} in the spirit of \citet[Chapter 2.9]{vanWel96} to resample $\Bb_n$, and finite-differencing to estimate the partial derivatives $\dot{C}_j$, $j \in \{1,\dots,d\}$. This resampling scheme was extended to the time series sequential setting in \cite{BucKoj16} and \cite{BucKojRohSeg14}.

\subsection{I.i.d.\ and dependent multiplier sequences}
\label{sec:mult:seq}

In the case of independent observations, multiplier bootstraps are based on \emph{i.i.d.\ multiplier sequences}. We say that a sequence of random variables $(\xi_{i,n})_{i \in \Z}$ is an i.i.d.\ multiplier sequence if:
\begin{enumerate}[({M}0)]
\item 
$(\xi_{i,n})_{i \in \Z}$ is i.i.d., independent of $\Xc_{1:n}$, with distribution not changing with~$n$, having mean 0, variance 1, and being such that $\int_0^\infty \{ \Pr(|\xi_{0,n}| > x) \}^{1/2} \dd x < \infty$.
\end{enumerate}

The time series extension of the multiplier bootstrap relies on the notion of \emph{dependent multiplier sequence}. The key idea due to \cite{Buh93} is to replace i.i.d.\ multipliers by suitably serially dependent multipliers that will capture the serial dependence in the data. We say that a sequence of random variables $(\xi_{i,n})_{i \in \Z}$ is a dependent multiplier sequence if:
\begin{enumerate}[({M}1)]
\item 
The sequence of random variables $(\xi_{i,n})_{i \in \Z}$ is stationary with $\Ex(\xi_{0,n}) = 0$, $\Ex(\xi_{0,n}^2) = 1$ and $\sup_{n \geq 1} \Ex(|\xi_{0,n}|^\gamma) < \infty$ for all $\gamma \geq 1$, and is independent of the available sample $\Xc_{1:n}$.
\item 
There exists a sequence $\ell_n \to \infty$ of strictly positive constants such that $\ell_n = o(n)$ and the sequence $(\xi_{i,n})_{i \in \Z}$ is $\ell_n$-dependent, i.e., $\xi_{i,n}$ is independent of $\xi_{i+h,n}$ for all $h > \ell_n$ and $i \in \N$.
\item 
There exists a function $\varphi:\R \to [0,1]$, symmetric around 0, continuous at $0$, satisfying $\varphi(0)=1$ and $\varphi(x)=0$ for all $|x| > 1$ such that $\Ex(\xi_{0,n} \xi_{h,n}) = \varphi(h/\ell_n)$ for all $h \in \Z$.
\end{enumerate}

As shall become clearer for instance from~\eqref{eq:Bb:n:hat} or~\eqref{eq:Bb:n:check} below, the bandwidth parameter $\ell_n$ defined in (M2) plays a role similar to that of the block length in the block bootstrap. In practice, for the non-smooth sequential dependent multiplier bootstrap to be presented in the forthcoming section, its value can be chosen in a data-driven way using the approach described in detail in \citet[Section~5]{BucKoj16}; see also Section~\ref{sec:cp}. The latter reference also describes in detail ways to generate dependent multiplier sequences.

\subsection{Non-smooth sequential dependent multiplier replicates}
\label{sec:non-smooth:mult}

Let $(\xi_{i,n}^{\sss [1]})_{i \in \Z}$, $(\xi_{i,n}^{\sss [2]})_{i \in \Z}$,\dots, be independent copies of the same multiplier sequence. Two different multiplier bootstrap replicates of the process $\Bb_n$ in~\eqref{eq:Bb:n} were proposed in \citet{BucKoj16} and \citet{BucKojRohSeg14}, respectively. For any $b \in \N$, $(s,t) \in \Lambda$ and $\bm u \in [0,1]^d$, they are defined by
\begin{equation}
\label{eq:Bb:n:hat}
\hat{\Bb}_n^{\sss [b]} (s,t,\bm u) = \frac{1}{\sqrt{n}}\sum_{i=\ip{ns}+1}^{\ip{nt}} \xi_{i,n}^{\sss [b]}\left\{  \1(\hat{\bm U}_i^{1:n} \leq \bm u)- C_{1:n}(\bm u) \right\}
\end{equation}
and
\begin{equation}
\label{eq:Bb:n:check}
\check{\Bb}_n^{\sss [b]} (s,t,\bm u) = \frac{1}{\sqrt{n}}\sum_{i=\ip{ns}+1}^{\ip{nt}} \xi_{i,n}^{\sss [b]}\left\{  \1(\hat{\bm U}_i^{\ip{ns}+1:\ip{nt}} \leq \bm u)- C_{\ip{ns}+1:\ip{nt}}(\bm u) \right\},
\end{equation}
respectively, where $C_{1:n}$ and $C_{\ip{ns}+1:\ip{nt}}$ are generically defined in~\eqref{eq:C:kl}  and with the convention that $\hat{\Bb}_n^{\sss [b]}(s, t, \cdot) = \check{\Bb}_n^{\sss [b]}(s, t, \cdot) = 0$ if $\ip{nt} - \ip{ns} = 0$.

In order to define multiplier bootstrap replicates of $\tilde{\Cb}_n$ in~\eqref{eq:Cb:n:tilde}, it is further necessary to estimate the unknown first-order partial derivatives $\dot C_j$, $j \in \{1,\dots,d\}$, of $C$.  In the rest of this section, $\dot C_{j,k:l}$ will denote an estimator of $\dot C_j$ based on a stretch $\Xc_{k:l} = (\bm X_k,\dots,\bm X_l)$ of observations, $1 \leq k \leq l \leq n$, with the convention that $\dot C_{j,k:l} = 0$ if $k > l$. Then, following \cite{BucKoj16} and \cite{BucKojRohSeg14}, we consider two types of multiplier bootstrap replicates of $\Cb_n$ in~\eqref{eq:Cb:n}. For any $b \in \N$, $(s,t) \in \Lambda$ and $\bm u \in [0,1]^d$, these are defined by
\begin{equation}
\label{eq:Cb:n:hat}
  \hat{\Cb}_n^{\sss [b]}(s, t, \bm u) = \hat\Bb_n^{\sss [b]}(s, t, \bm u) -   \sum_{j=1}^d \dot C_{j,1:n}(\bm u) \,  \hat\Bb_n^{\sss [b]}(s, t, \bm u^{(j)})
  \end{equation}
and
\begin{equation}
\label{eq:Cb:n:check}
  \check{\Cb}_n^{\sss [b]}(s, t, \bm u) = \check\Bb_n^{\sss [b]}(s, t, \bm u) -  \sum_{j=1}^d \dot C_{j,\ip{ns}+1:\ip{nt}}(\bm u) \,  \check\Bb_n^{\sss [b]}(s, t, \bm u^{(j)}),
\end{equation}
respectively, where $\hat\Bb_n^{\sss [b]}$ (resp.\ $\check\Bb_n^{\sss [b]}$) is defined in~\eqref{eq:Bb:n:hat} (resp.\ \eqref{eq:Bb:n:check}). Clearly, both types of replicates coincide in a non-sequential setting as $\hat{\Cb}_n^{\sss [b]}(0, 1, \cdot) = \check{\Cb}_n^{\sss [b]}(0, 1, \cdot)$. As far as the estimators of the partial derivatives are concerned, it is expected that the more accurate they are, the better the approximation of the ``sampling distribution'' of $\Cb_n$ by the multiplier replicates will be. The latter aspect will be discussed in detail in Section~\ref{sec:pd}, where two broad classes of smooth estimators will be introduced and studied both theoretically and empirically.

\subsection{Smooth sequential dependent multiplier replicates}

We now consider a similar construction but based on smooth analogs of $\hat\Bb_n^{\sss [b]}$ in~\eqref{eq:Bb:n:hat} and  $\check\Bb_n^{\sss [b]}$ in \eqref{eq:Bb:n:check}. Specifically, Corollary~\ref{cor:wc:Cb:n:nu} suggests that, to bootstrap $\Cb_n^\nu$ in~\eqref{eq:Cb:n:nu}, a first step is to bootstrap $\tilde \Bb_n^\nu$ in~\eqref{eq:tilde:Bb:n:nu} or $\bar \Bb_n^\nu$ in~\eqref{eq:bar:Bb:n:nu}. By analogy with~\eqref{eq:C:kl:nu} and~\eqref{eq:C:kl:K}, natural smooth analogs of $\hat\Bb_n^{\sss [b]}$  and $\check\Bb_n^{\sss [b]}$ could be defined, for any $b \in \N$, $(s,t) \in \Lambda$ and $\bm u \in [0,1]^d$, by
\begin{equation}
  \label{eq:Bb:n:nu:hat}
  \begin{split}
  \hat{\Bb}_n^{\sss [b],\nu} (s,t,\bm u) &= \int_{[0,1]^d} \hat\Bb_n^{\sss [b]}(s,t,\bm w) \dd \nu_{\bm u}^{\sss \Xc_{1:n}}(\bm w) \\
&= \frac{1}{\sqrt{n}} \sum_{i=\ip{ns}+1}^{\ip{nt}} \xi_{i,n}^{\sss [b]}\left\{  \Kc_{\sss{\bm R^{1:n}_i}}^{\sss \Xc_{1:n}}(\bm u)- C_{1:n}^\nu(\bm u) \right\}
\end{split}
\end{equation}
and
\begin{equation}
\label{eq:Bb:n:nu:check}
  \begin{split}
  \check{\Bb}_n^{\sss [b],\nu} (s,t,\bm u) &= \int_{[0,1]^d} \check \Bb_n^{\sss [b]}(s,t,\bm w) \dd \nu_{\bm u}^{\sss \Xc_{\ip{ns}+1:\ip{nt}}}(\bm w) \\ &= \frac{1}{\sqrt{n}} \sum_{i=\ip{ns}+1}^{\ip{nt}} \xi_{i,n}^{\sss [b]}\left\{ \Kc_{\sss{\bm R^{\ip{ns}+1:\ip{nt}}_i}}^{\sss \Xc_{\ip{ns}+1:\ip{nt}}}(\bm u)  - C_{\ip{ns}+1:\ip{nt}}^\nu(\bm u) \right\},
\end{split}
\end{equation}
respectively, where $\Kc_{\sss{\bm R^{1:n}_i}}^{\sss \Xc_{1:n}}$ and $\Kc_{\sss{\bm R^{\ip{ns}+1:\ip{nt}}_i}}^{\sss \Xc_{\ip{ns}+1:\ip{nt}}}$ are defined in~\eqref{eq:K}. Combining these ingredients with estimators of the unknown partial derivatives of $C$, as smooth analogs of $\hat\Cb_n^{\sss [b]}$ in~\eqref{eq:Cb:n:hat} and $\check\Cb_n^{\sss [b]}$ in~\eqref{eq:Cb:n:check}, we obtain
\begin{equation}
\label{eq:Cb:n:hat:nu}
\hat{\Cb}_n^{\sss [b], \nu}(s, t, \bm u) = \hat\Bb_n^{\sss [b], \nu}(s, t, \bm u) -   \sum_{j=1}^d \dot C_{j,1:n}(\bm u) \,  \hat\Bb_n^{\sss [b], \nu}(s, t, \bm u^{(j)}),
\end{equation}
and
\begin{equation}
\label{eq:Cb:n:check:nu}
  \check{\Cb}_n^{\sss [b], \nu}(s, t, \bm u) = \check\Bb_n^{\sss [b], \nu}(s, t, \bm u) -  \sum_{j=1}^d \dot C_{j,\ip{ns}+1:\ip{nt}}(\bm u) \,  \check\Bb_n^{\sss [b], \nu}(s, t, \bm u^{(j)}),
\end{equation}
respectively, for $b \in \N$, $(s,t) \in \Lambda$ and $\bm u \in [0,1]^d$. 

To establish the asymptotic validity of these smooth multiplier bootstrap replicates, it will suffice that the partial derivative estimators satisfy the following rather natural mild condition.

\begin{cond}[Bounded and weakly consistent partial derivative estimators]
  \label{cond:pd:est}
There exists a constant $\zeta > 0$ such that, for any $j \in \{1,\dots,d\}$ and $n \in \N$,
$$
\sup_{(s,t,\bm u) \in \Lambda \times [0,1]^d}  \left| \dot C_{j,\ip{ns}+1:\ip{nt}}(\bm u) \right| \leq \zeta.
$$
Furthermore, for any $\delta \in (0,1)$, $\eps \in (0,1/2)$ and $j \in \{1,\dots,d\}$,
\begin{equation*}
\sup_{\substack{(s,t) \in \Lambda \\ t-s \geq \delta}} \sup_{\substack{\bm u \in [0,1]^d\\ u_j \in [\eps,1-\eps]}}  \left| \dot C_{j,\ip{ns}+1:\ip{nt}}(\bm u) - \dot C_j(\bm u) \right| = o_\Pr(1).
\end{equation*}
\end{cond}

In addition, following~\cite{BucKojRohSeg14}, we impose the following condition on the observations and the underlying multiplier sequences.

\begin{cond}[Strong mixing and multiplier conditions]
\label{cond:DGP}
One of the following two conditions holds:
\begin{enumerate}[\bf (i)]
\item The random vectors in $\Xc_{1:n}$ are i.i.d.\ and $(\xi_{i,n}^{\sss [1]})_{i \in \Z},(\xi_{i,n}^{\sss [2]})_{i \in \Z}$,\dots are independent copies of a multiplier sequence satisfying~(M0).
\item The stretch $\Xc_{1:n}$ is drawn from a stationary sequence $(\bm X_i)_{i \in \Z}$ whose strong mixing coefficients satisfy $\alpha_r^{\bm X} = O(r^{-a})$ for some $a > 3 + 3d/2$ as $r \to \infty$. Furthermore, $(\xi_{i,n}^{\sss [1]})_{i \in \Z},(\xi_{i,n}^{\sss [2]})_{i \in \Z}$, \dots are independent copies of a dependent multiplier sequence satisfying~(M1)--(M3) with $\ell_n = O(n^{1/2 - \gamma})$ for some $0 < \gamma < 1/2$.
\end{enumerate}
\end{cond}

The following result is proven in Appendix~\ref{proof:thm:ae:Cb:n:hat:check}.

\begin{thm}[Asymptotic validity of the smooth dependent multiplier bootstraps]
\label{thm:ae:Cb:n:hat:check}
Under Conditions~\ref{cond:pd}, \ref{cond:var:W}, \ref{cond:pd:est} and~\ref{cond:DGP}, for any $b \in \N$, there holds
\begin{align}
  \label{eq:ae:Cb:n:hat}
  \sup_{(s,t,\bm u) \in \Lambda \times [0,1]^d} \left| \hat{\Cb}_n^{\sss [b],\nu} (s,t,\bm u)-\hat{\Cb}_n^{\sss [b]} (s,t,\bm u)\right| = o_\Pr(1), \\
  \label{eq:ae:Cb:n:check}
  \sup_{(s,t,\bm u) \in \Lambda \times [0,1]^d} \left| \check{\Cb}_n^{\sss [b],\nu} (s,t,\bm u)-\check{\Cb}_n^{\sss [b]} (s,t,\bm u)\right| = o_\Pr(1).
\end{align}
Furthermore,
\begin{align*}
  (\Cb_n^\nu,\hat{\Cb}_n^{\sss [1],\nu},\hat{\Cb}_n^{\sss [2],\nu}) &\leadsto (\Cb_C,\Cb_C^{\sss [1]},\Cb_C^{\sss [2]}), \\
  (\Cb_n^\nu,\check{\Cb}_n^{\sss [1],\nu},\check{\Cb}_n^{\sss [2],\nu}) &\leadsto (\Cb_C,\Cb_C^{\sss [1]},\Cb_C^{\sss [2]})
\end{align*}
in $\{ \ell^{\infty} (\Lambda \times [0,1]^d) \}^3$, where $\Cb_C^{\sss [1]}$ and $\Cb_C^{\sss [2]}$ are independent copies of $\Cb_C$ defined in~\eqref{eq:Cb:C}.
\end{thm}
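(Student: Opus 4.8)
The plan is to prove the two uniform asymptotic equivalences~\eqref{eq:ae:Cb:n:hat} and~\eqref{eq:ae:Cb:n:check} first, and then to read off the two joint weak convergences by combining them with already available asymptotics. The first step is a reduction to the underlying $\Bb$-processes: since the partial-derivative estimators appearing in~\eqref{eq:Cb:n:hat:nu} and~\eqref{eq:Cb:n:hat} (resp.\ in~\eqref{eq:Cb:n:check:nu} and~\eqref{eq:Cb:n:check}) are common to the smooth and the non-smooth replicate, one has
\[
\hat\Cb_n^{\sss [b],\nu} - \hat\Cb_n^{\sss [b]} = \big( \hat\Bb_n^{\sss [b],\nu} - \hat\Bb_n^{\sss [b]} \big) - \sum_{j=1}^d \dot C_{j,1:n}(\bm u)\,\big\{ \hat\Bb_n^{\sss [b],\nu}(s,t,\bm u^{(j)}) - \hat\Bb_n^{\sss [b]}(s,t,\bm u^{(j)}) \big\},
\]
and analogously for the check replicates. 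As the first part of Condition~\ref{cond:pd:est} bounds each $|\dot C_{j,\ip{ns}+1:\ip{nt}}|$ by $\zeta$, it suffices to prove that $\sup_{\Lambda \times [0,1]^d} | \hat\Bb_n^{\sss [b],\nu} - \hat\Bb_n^{\sss [b]} | = o_\Pr(1)$ together with its check analogue.

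The core of the argument exploits the fact that, by the first lines of~\eqref{eq:Bb:n:nu:hat} and~\eqref{eq:Bb:n:nu:check}, each smooth replicate is the corresponding non-smooth one smoothed in its last argument, whence
\[
\hat\Bb_n^{\sss [b],\nu}(s,t,\bm u) - \hat\Bb_n^{\sss [b]}(s,t,\bm u) = \int_{[0,1]^d} \big\{ \hat\Bb_n^{\sss [b]}(s,t,\bm w) - \hat\Bb_n^{\sss [b]}(s,t,\bm u) \big\}\,\dd \nu_{\bm u}^{\sss \Xc_{1:n}}(\bm w).
\]
I would split this integral at $|\bm w - \bm u| \le \eta_n$ with $\eta_n = n^{-1/2 + \beta}$ for a small $\beta > 0$. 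On the complement, since $\bm W_{\bm u}^{\sss \Xc_{1:n}}$ has mean $\bm u$, Condition~\ref{cond:var:W} and Chebyshev's inequality bound the $\nu_{\bm u}^{\sss \Xc_{1:n}}$-mass by $O(n^{-2\beta})$ uniformly in $\bm u$, while $\sup | \hat\Bb_n^{\sss [b]} | = O_\Pr(1)$ by the tightness accompanying the weak convergence of $\hat\Bb_n^{\sss [b]}$ to its continuous Gaussian limit $\Bb_C^{\sss [b]}$ established in \cite{BucKoj16}; the ``far'' contribution is therefore $o_\Pr(1)$ uniformly. On $\{ |\bm w - \bm u| \le \eta_n \}$ the integrand is dominated by the modulus of continuity of $\hat\Bb_n^{\sss [b]}$ in its last argument over windows of width $\eta_n \to 0$, which is $o_\Pr(1)$ by the asymptotic equicontinuity that tightness provides. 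This settles the hat equivalence.

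For $\check\Bb_n^{\sss [b],\nu}$ the smoothing measure $\nu_{\bm u}^{\sss \Xc_{\ip{ns}+1:\ip{nt}}}$ is built from the substretch, so by Condition~\ref{cond:var:W} its spread is governed by the subsample size $\ip{nt} - \ip{ns}$ and it ceases to concentrate when $t - s$ is small. I would accordingly split $\Lambda$ into $\{ t - s \ge \delta \}$ and $\{ t - s < \delta \}$. On the former, the subsample size is of order $n\delta$, the spread is $O\big( (n\delta)^{-1/2} \big) \to 0$, and the near/far argument above applies verbatim using the weak convergence and equicontinuity of $\check\Bb_n^{\sss [b]}$ from \cite{BucKojRohSeg14}. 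On the latter, concentration fails, but because $\nu_{\bm u}^{\sss \Xc_{\ip{ns}+1:\ip{nt}}}$ is a probability measure one still has $\sup_{t - s < \delta} | \check\Bb_n^{\sss [b],\nu} | \le \sup_{t - s < \delta} \sup_{\bm w} | \check\Bb_n^{\sss [b]}(s,t,\bm w) |$, and this bound, like $\sup_{t - s < \delta} | \check\Bb_n^{\sss [b]} |$ itself, is $o_\Pr(1)$ after letting $\delta \to 0$ because the continuous limit vanishes on the diagonal $\{ s = t \}$. Reconciling these two regimes through the usual $\eps$--$\delta$ bookkeeping is the delicate point, and I expect this interplay between a non-concentrating, subsample-dependent smoothing window and the diagonal behaviour of the limit to be the main obstacle of the proof.

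It remains to assemble the weak convergences. Under Conditions~\ref{cond:pd},~\ref{cond:pd:est} and~\ref{cond:DGP}, the results of \cite{BucKoj16} and \cite{BucKojRohSeg14} yield $(\Cb_n, \hat\Cb_n^{\sss [1]}, \hat\Cb_n^{\sss [2]}) \leadsto (\Cb_C, \Cb_C^{\sss [1]}, \Cb_C^{\sss [2]})$ and the corresponding statement for the check replicates in $\{ \ell^\infty(\Lambda \times [0,1]^d) \}^3$, the consistency part of Condition~\ref{cond:pd:est} being used precisely here. Combining this with $\sup | \Cb_n^\nu - \Cb_n | = o_\Pr(1)$ from Theorem~\ref{thm:Cb:n:nu} and with the equivalences~\eqref{eq:ae:Cb:n:hat}--\eqref{eq:ae:Cb:n:check} just obtained, a coordinatewise Slutsky argument in the product space replaces each non-smooth component by its smooth counterpart and delivers the two announced joint convergences, the independence of the multiplier copies ensuring that $\Cb_C^{\sss [1]}$ and $\Cb_C^{\sss [2]}$ are independent copies of $\Cb_C$. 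Lemma~3.1 of \cite{BucKoj19} can finally be invoked should one wish to phrase these statements as conditional bootstrap consistency results.
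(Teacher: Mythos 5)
Your proposal is correct and follows essentially the same route as the paper: reduce the $\Cb$-level equivalences to the $\Bb$-level ones via the boundedness part of Condition~\ref{cond:pd:est}, establish those by showing that smoothing a weakly convergent process with almost surely continuous limit (vanishing on the diagonal) is uniformly negligible, and then assemble the joint weak convergences from the non-smooth multiplier results of \cite{BucKoj16} and \cite{BucKojRohSeg14} combined with Theorem~\ref{thm:Cb:n:nu} and the equivalences just proved. Your near/far split under the smoothing measure, and the $\{t-s\ge\delta\}$ versus $\{t-s<\delta\}$ dichotomy for the check replicates, are precisely the content of the paper's Lemma~\ref{lem:stochastic} (cited there to Lemma~32 of \cite{KojYi22}), which the paper invokes rather than reproves.
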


\subsection{Finite-sample comparison of three multiplier bootstraps}

From Theorem~\ref{thm:Cb:n:nu}, we know that, under Conditions~\ref{cond:pd} and~\ref{cond:var:W}, the classical sequential empirical copula process $\Cb_n$ in~\eqref{eq:Cb:n} and the smooth sequential empirical copula process $\Cb_n^\nu$ in~\eqref{eq:Cb:n:nu} are asymptotically equivalent. In a related way, Theorem~\ref{thm:ae:Cb:n:hat:check} provides conditions under which corresponding multiplier and smooth multiplier replicates are asymptotically equivalent. Although one expects that $\Cb_n^\nu$ is probably best resampled using multiplier replicates constructed with the same smoothing distributions, that is, with $\hat{\Cb}_n^{\sss [b], \nu}$ in~\eqref{eq:Cb:n:hat:nu} or $\check{\Cb}_n^{\sss [b], \nu}$ in~\eqref{eq:Cb:n:check:nu}, we have no asymptotic results to support this (see also Remark~\ref{rem:conv:rate}). Indeed, given that all versions of multiplier replicates are asymptotically equivalent, it may well be that, for instance, in some cases, classical (non-smooth) multiplier replicates are equivalent or even preferable to smooth multiplier replicates when it comes to resampling $\Cb_n^\nu$. It is the aim of this section to study this empirically. For simplicity, we restrict our investigations to a non-sequential setting and independent observations.

Specifically, we designed experiments to study which multiplier replicates are best suited to estimate certain functionals of the three (non-sequential) empirical copula processes defined, for any $\bm u \in [0,1]^d$, by
\begin{align}
  \label{eq:Cb:n:Dirac}
  \Cb_n^{\sss \Dirac}(\bm u) &= \sqrt{n}\{ C_{1:n}(\bm u) - C(\bm u)\} = \Cb_n(0,1,\bm u), \\
  \label{eq:Cb:n:Bin}
  \Cb_n^{\sss \Bin}(\bm u) &=  \sqrt{n}\{ C_{1:n}^{\sss \Bin}(\bm u) - C(\bm u)\}, \\
   \label{eq:Cb:n:BetaB4}
  \Cb_n^{\sss \BetaB}(\bm u) &=  \sqrt{n}\{ C_{1:n}^{\sss \BetaB}(\bm u) - C(\bm u)\}, 
\end{align}
where 
\begin{itemize}
\item $\Cb_n$ is the classical (non-smooth) sequential empirical copula process defined in~\eqref{eq:Cb:n},
\item $C_{1:n}^{\sss \Bin}$ is the empirical beta copula in~\eqref{eq:C:1n:beta} (which is obtained by considering smoothing distributions with scaled binomial margins and independence copula as explained in Section~\ref{sec:ec}),
\item $C_{1:n}^{\sss \BetaB}$ is the version of $C_{1:n}^\nu$ introduced in Section~\ref{sec:ec} obtained by considering smoothing distributions with scaled beta-binomial margins and survival copula the empirical beta copula $C_{1:n}^{\sss \Bin}$, and found to have the best finite-sample performance in the numerical experiments of \cite{KojYi22}.
\end{itemize}

As already mentioned, since we are in a non-sequential setting, the two generic multiplier replicates defined in~\eqref{eq:Cb:n:hat:nu} and~\eqref{eq:Cb:n:check:nu} coincide. To approximate the ``sampling distributions'' of the three empirical copula processes defined above, we considered as candidate bootstraps the multiplier replicates defined using the same smoothing distributions. They will be denoted by $\hat \Cb_n^{\sss [b], \Dirac}$, $\hat \Cb_n^{\sss [b],\Bin}$ and $\hat \Cb_n^{\sss [b],\BetaB}$, $b \in \N$, respectively, as we continue. To only investigate the effect of the choice of the smoothing distributions involved in the definition of $\hat{\Bb}_n^{\sss [b],\nu}$ in~\eqref{eq:Bb:n:nu:hat}, all three multiplier replicates were computed using the true partial derivative $\dot C_j$, $j \in \{1,\dots,d\}$. Furthermore, since we restricted our experiments to independent observations, all the multiplier replicates were based on i.i.d.\ multiplier sequences defined in (M0) in Section~\ref{sec:mult:seq}. Following \cite{BucDet10}, these sequences were simply taken to be random samples drawn from the uniform distribution on $\{-1, 1\}$.

For the design of our experiments, we followed again \cite{BucDet10}. First, for $d=2$, we assessed how well the covariances of the empirical processes $\Cb_n^{\sss \Dirac}$ in~\eqref{eq:Cb:n:Dirac}, $\Cb_n^{\sss \Bin}$ in~\eqref{eq:Cb:n:Bin} and $\Cb_n^{\sss \BetaB}$ in~\eqref{eq:Cb:n:BetaB4} at the points $P = \{(i/3,j/3):i,j=1,2\}$ can be approximated using the three possible multiplier bootstrap replicates. For each target empirical copula process, we began by precisely estimating its covariance at the points in $P$ from $100\,000$ independent samples of size $n \in \{10, 20, 40, 80\}$ drawn from a bivariate copula $C$ with a Kendall's tau of $\tau \in \{0, 0.25, 0.5, 0.75\}$. For $C$, we considered either the Clayton or the Gumbel--Hougaard copula. Next, for each considered combination of $C$, $n$, $\tau$, target process and multiplier process, we generated 1000 samples from $C$, and, for each sample, we computed $B=1000$ multiplier bootstrap replicates. These $B=1000$ replicates were used to obtain one estimate of the covariance of the target process at the points in~$P$.

\begin{figure}[t!]
\begin{center}
  \includegraphics*[width=1\linewidth]{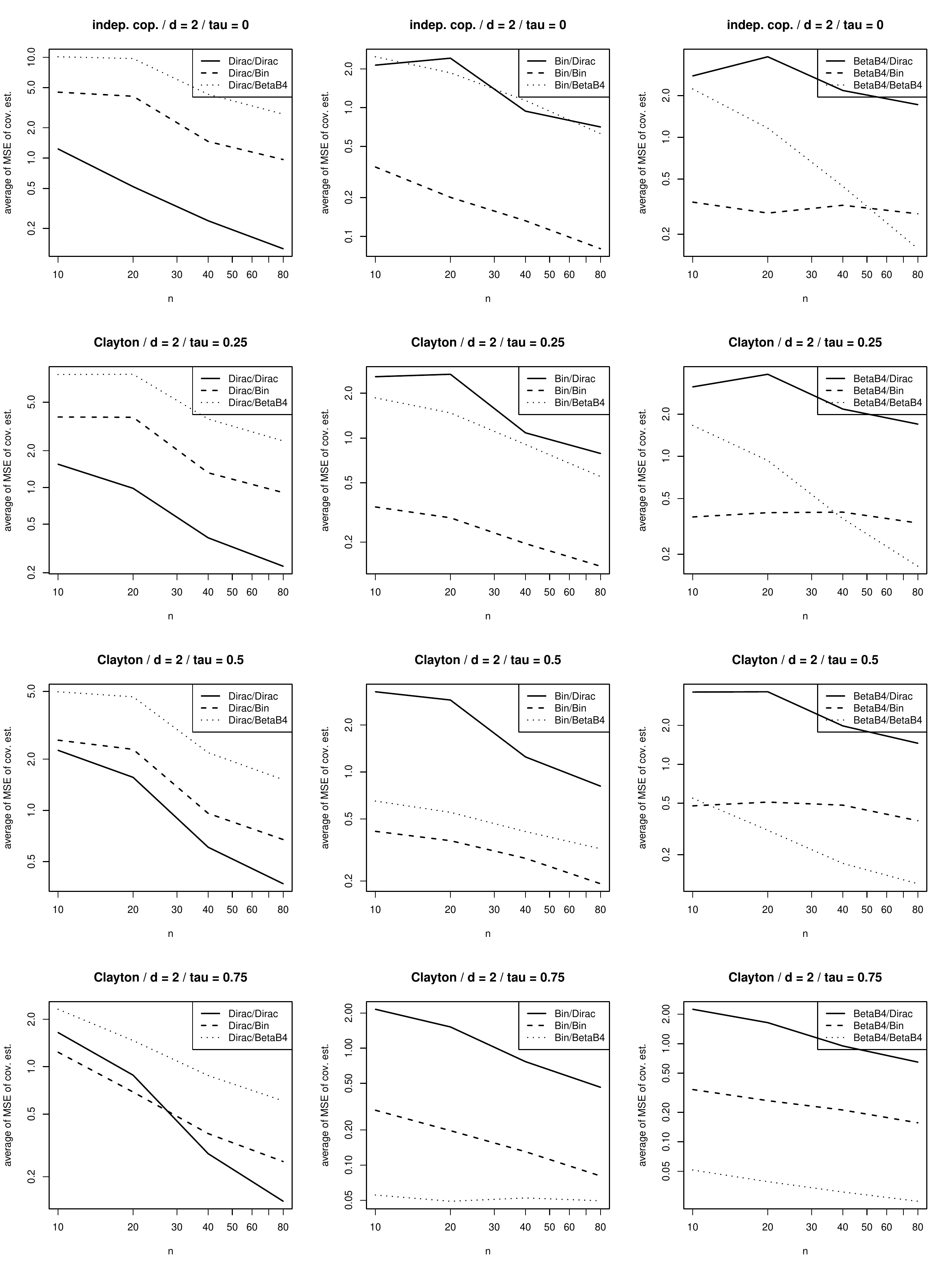}
  \caption{\label{fig:cov} For observations generated from the bivariate Clayton copula with a Kendall's tau of $\tau \in \{0,0.25,0.5,0.75\}$ and for each combination of target and multiplier process, average of the empirical MSEs ($\times 10^4$) of the bootstrap estimators of the covariance of the target process at the points in $P$ against the sample size $n$. The legend ``Dirac/Bin'' for instance refers to the situation when the target process is $\Cb_n^{\sss \Dirac}$ and the multiplier process is $\hat \Cb_n^{\sss [b],\Bin}$.}
\end{center}
\end{figure}

The results when $C$ is the Clayton copula with a Kendall's tau of $\tau \in \{0, 0.25, 0.5, 0.75\}$ are reported in Figure~\ref{fig:cov}. The first (resp.\ second, third) column of graphs reports the average of the empirical mean square errors (MSEs) $\times 10^4$ of the three candidate multiplier estimators of the covariance of $\Cb_n^{\sss \Dirac}$ (resp.\ $\Cb_n^{\sss \Bin}$, $\Cb_n^{\sss \BetaB}$) at the points in $P$ against the sample size~$n$. Each row of graphs corresponds to a different value of $\tau$. In the top-left panel for instance, the solid (resp.\ dashed, dotted) curve gives the average MSE when the covariance of $\Cb_n^{\sss \Dirac}$ is estimated using $\hat \Cb_n^{\sss [b], \Dirac}$ (resp.\ $\hat \Cb_n^{\sss [b],\Bin}$, $\hat \Cb_n^{\sss [b],\BetaB}$).

As one can see, reassuringly, all the curves are globally decreasing, confirming that, for each target process, the bootstrap approximations improve as $n$ increases. A more careful inspection reveals that, in almost all settings, it is the multiplier bootstrap constructed with the same smoothing distributions as the target process that leads to the best estimation. It is actually only when $\Cb_n^{\sss \BetaB}$ is the target process that covariance estimations based on $\hat \Cb_n^{\sss [b],\Bin}$ are sometimes better than estimations based on $\hat \Cb_n^{\sss [b],\BetaB}$. This happens mostly for small $n$ and $\tau$. Results for the Gumbel--Hougaard copula (not reported) are not qualitatively different.

In a second experiment, we assessed how well high quantiles of
\begin{equation}
  \label{eq:KS:CvM}
  KS(f_n) = \sup_{\bm u \in [0,1]^d} |f_n(\bm u)| \qquad \text{and} \qquad
  CvM(f_n) = \int_{[0,1]^d} \{ f_n(\bm u) \}^2 \dd \bm u
\end{equation}
for $d \in \{2,3\}$ and $f_n \in \{\Cb_n^{\sss \Dirac}, \Cb_n^{\sss \Bin}, \Cb_n^{\sss \BetaB} \}$ can be estimated by the three candidate multiplier bootstraps. From a practical perspective, the integral in~\eqref{eq:KS:CvM} was approximated by a mean using a uniform grid on $(0,1)^d$ of size $10^2$ when $d=2$ and $5^3$ when $d=3$. For $d \in \{2,3\}$, $C$ the Clayton or the Gumbel-Hougaard copula whose bivariate margins have a Kendall's tau of $\tau \in \{0, 0.25, 0.5, 0.75\}$ and $n \in \{10, 20, 40, 80\}$, the 90\% and 95\%-quantiles of $CvM(f_n)$ were first precisely estimated from $100\,000$ independent samples of size $n$ drawn from~$C$. Next, for each combination of $d$, $C$, $n$, $\tau$, target process and multiplier process, we generated 1000 samples from $C$ and, for each sample, we computed $B=1000$ multiplier bootstrap replicates. These $B=1000$ replicates were used to obtain one estimate of each of the target quantiles. Following \cite{KojSte19}, all such estimations were carried out using \emph{centered} replicates of $f_n$. When $f_n = \Cb_n^{\sss \Dirac}$ for instance, this amounts to using, for any $\bm u \in [0,1]^d$ and $b \in \{1,\dots,B\}$,
\begin{equation*}
\hat \Cb_n^{\sss [b],\Dirac}(\bm u) - \frac{1}{B} \sum_{b=1}^B \hat \Cb_n^{\sss [b],\Dirac}(\bm u),
\end{equation*}
instead of $\hat \Cb_n^{\sss [b],\Dirac}(\bm u) = \hat \Cb_n^{\sss [b]}(0,1,\bm u)$ in~\eqref{eq:Cb:n:hat}. The centered versions of the other replicates are defined analogously. The rationale behind centering is that the replicates, whatever their type, can be regarded as computable approximations of the limiting \emph{centered} Gaussian process $\Cb_C(0,1,\cdot)$ in~\eqref{eq:Cb:C}; see, for instance, Theorem~\ref{thm:ae:Cb:n:hat:check}. Note that the use of centered replicates was found to always lead to better finite-sample performance in the related Monte Carlo experiments carried out in \cite{KojSte19}. Its use is however irrelevant in the previous covariance estimation experiment given the formula of the empirical covariance.

\begin{figure}[t!]
\begin{center}
  \includegraphics*[width=1\linewidth]{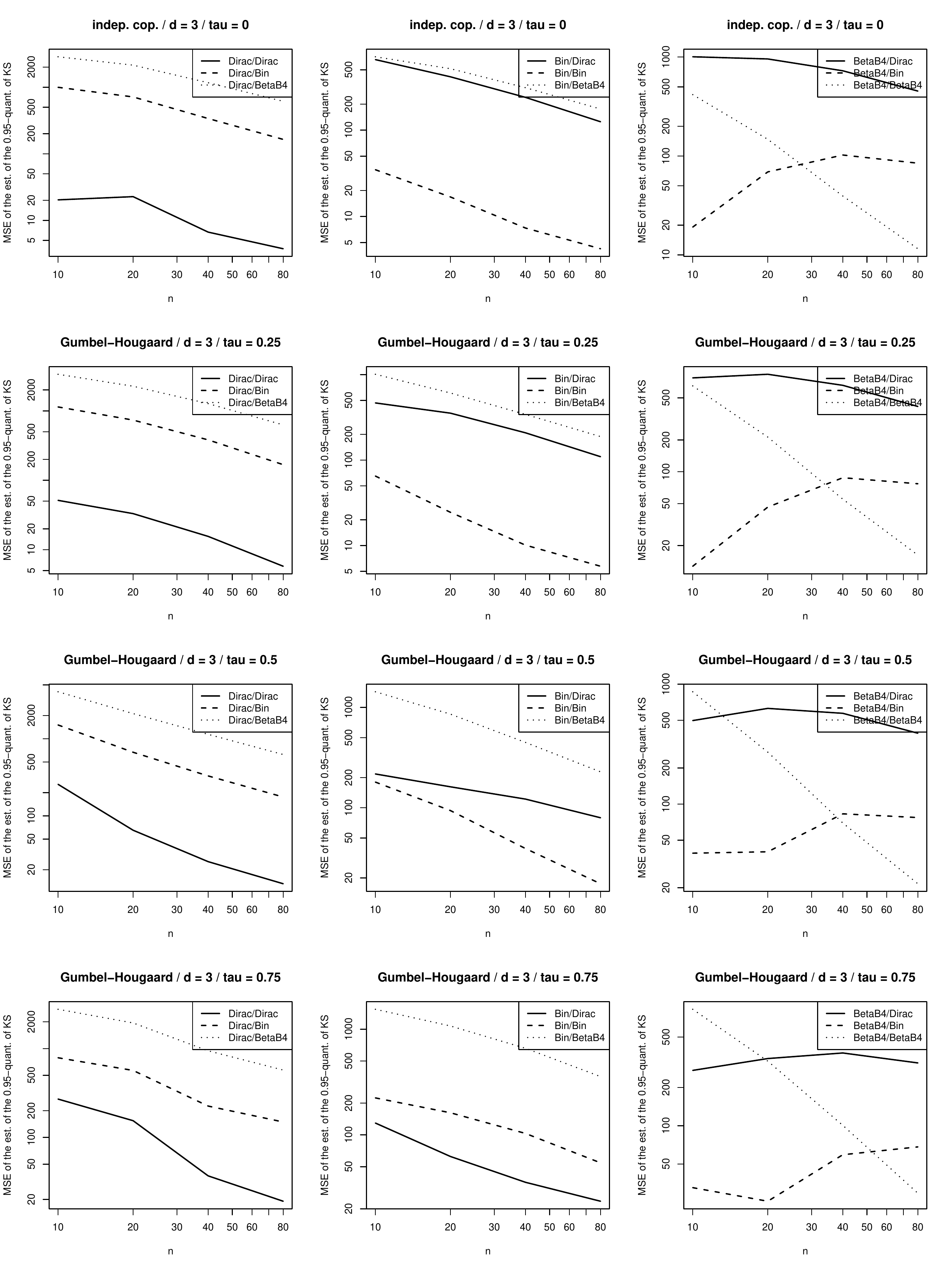}
  \caption{\label{fig:quant} For observations generated from the trivariate Gumbel--Hougaard copula whose bivariate margins have a Kendall's tau of $\tau \in \{0,0.25,0.5,0.75\}$, empirical MSE ($\times 10^4$) of the three candidate multiplier estimators of high quantiles of $KS(f_n)$ in~\eqref{eq:KS:CvM} for $f_n \in \{\Cb_n^{\sss \Dirac}, \Cb_n^{\sss \Bin}, \Cb_n^{\sss \BetaB} \}$ against the sample size $n$. The legend ``Dirac/Bin'' for instance refers to the situation when the target process is $\Cb_n^{\sss \Dirac}$ and the multiplier process is $\hat \Cb_n^{\sss [b],\Bin}$.}
\end{center}
\end{figure}

The results for the 95\%-quantiles of the Kolmogorov--Smirnov functionals when $C$ is the trivariate Gumbel--Hougaard are reported in Figure~\ref{fig:quant}. The conclusions are overall similar to those obtained after the first experiment:
\begin{itemize}
\item  The 95\%-quantile of the Kolmogorov--Smirnov functional of $\Cb_n^{\sss \Dirac}$ is always best estimated using the corresponding empirical quantile of the same functional of $\hat \Cb_n^{\sss [b],\Dirac}$.
\item When the target process is $\Cb_n^{\sss \Bin}$, the best results are obtained when the multiplier process is $\hat \Cb_n^{\sss [b],\Bin}$, except in the case of strongly dependent observations in which case, for the sample sizes under consideration, $\hat \Cb_n^{\sss [b],\Dirac}$ gives better estimations.
\item When the target process is $\Cb_n^{\sss \BetaB}$, it is only when $n$ reaches 40 or 80 that the best estimations are obtained using $\hat \Cb_n^{\sss [b],\BetaB}$. For smaller $n$, the use of $\hat \Cb_n^{\sss [b],\Bin}$ gives better results. 
\end{itemize}
Results for the Clayton copula, 90\%-quantiles, dimension $d=2$ or Cramér--von Mises functionals (not reported) are not qualitatively different.

The previous experiments confirm that it seems meaningful to resample $\Cb_n^\nu$ in~\eqref{eq:Cb:n:nu} using multiplier replicates constructed with the same smoothing distributions, that is, with $\hat{\Cb}_n^{\sss [b], \nu}$ in~\eqref{eq:Cb:n:hat:nu} or $\check{\Cb}_n^{\sss [b], \nu}$ in~\eqref{eq:Cb:n:check:nu}, although this choice may not be optimal in certain cases when $n$ is small.

\subsection{Application to change-point detection}
\label{sec:cp}

A natural application area for the smooth sequential empirical copula process $\Cb_n^\nu$ in \eqref{eq:Cb:n:nu} is that of change-point detection. To illustrate the possible advantages coming from the use of smooth empirical copulas in inference procedures, we first briefly explain in this section how the previous derivations can be used to obtain a smooth version of the test proposed in \cite{BucKojRohSeg14} for detecting changes in the cross-sectional dependence of multivariate time series. We then reproduce some of the experiments of \cite{BucKojRohSeg14} to compare the (non-smooth) test proposed therein with its smooth version based on the empirical beta copula and on corresponding smooth bootstrap replicates. Note that we did not consider the use of the alternative data-adaptive smoothing distributions considered in \cite{KojYi22} and leading to the estimator $C_{k:l}^{\sss \BetaB}$ because they incur a substantially higher computational cost.

The null hypothesis of such tests is that $\Xc_{1:n}$ is a stretch from a stationary time series (of continuous random vectors) and their aim is to be particularly sensitive to the alternative hypothesis
\begin{equation}   
   \begin{split}
   \label{eq:H1}
   H_1 :\;  & \exists \text{ distinct } C_1, \; C_2 \text{ and } k^\star \in \{1,\dots,n-1\} \text{ such that } \\ &\bm X_1, \dots, \bm X_{k^\star} \text{ have copula } C_1 \text{ and } \bm X_{k^\star+1}, \dots, \bm X_n \text{ have copula } C_2.
  \end{split}
\end{equation}

The ingredients of the smooth version of the test can be obtained \emph{mutatis mutandis} from \cite{BucKojRohSeg14}. Specifically, we consider as test statistic the maximally selected Cram\'er--von Mises functional defined by
$$
S_n^\nu = \sup_{s \in [0,1]} \int_{[0,1]^d} \left\{\Db_n^\nu(s, \bm u) \right\}^2 \dd C_{1:n}(\bm u),
$$
where
$$
\Db_n^\nu (s,\bm u) = \sqrt{n} \lambda_n(0,s) \lambda_n(s,1)\{C_{1:\ip{ns}}^\nu(\bm u) - C_{\ip{ns}+1:n}^\nu(\bm u)\}, \qquad (s,\bm u) \in [0,1]^{d+1}.
$$
As one can see, the latter involves comparisons of (smooth) empirical copulas computed from subsamples of the data. Noticing that, under the null,
$$
\Db_n^\nu (s,\bm u) = \lambda_n(s,1)\, \Cb_n^\nu(0,s,\bm u) - \lambda_n(0,s) \Cb_n^\nu(s,1,\bm u), \qquad (s,\bm u) \in [0,1]^{d+1},
$$
possible multiplier bootstrap replicates for $S_n^\nu$ can be defined either by
$$
\hat S_n^{\sss [b],\nu} = \sup_{s \in [0,1]} \int_{[0,1]^d} \{\hat \Db_n^{\sss [b],\nu}(s, \bm u) \}^2 \dd C_{1:n}(\bm u), \qquad b \in \N,
$$
or by
\begin{equation}
  \label{eq:check:mult}
  \check S_n^{\sss [b],\nu} = \sup_{s \in [0,1]} \int_{[0,1]^d} \{\check \Db_n^{\sss [b],\nu}(s, \bm u) \}^2 \dd C_{1:n}(\bm u), \qquad b \in \N,
\end{equation}
where, for any $(s,\bm u) \in [0,1]^{d+1}$,
\begin{align*}
  \hat \Db_n^{\sss [b],\nu} (s,\bm u) &= \lambda_n(s,1)\, \hat \Cb_n^{\sss [b],\nu}(0,s,\bm u) - \lambda_n(0,s)\, \hat \Cb_n^{\sss [b],\nu}(s,1,\bm u),\\
  \check \Db_n^{\sss [b],\nu} (s,\bm u) &= \lambda_n(s,1)\, \check \Cb_n^{\sss [b],\nu}(0,s,\bm u) - \lambda_n(0,s)\, \check \Cb_n^{\sss [b],\nu}(s,1,\bm u),
\end{align*}
with $\hat \Cb_n^{\sss [b],\nu}$ and $\check \Cb_n^{\sss [b],\nu}$ defined in~\eqref{eq:Cb:n:hat:nu} and~\eqref{eq:Cb:n:check:nu}, respectively. Note that, in the expressions of the multiplier replicates of $\Cb_n^\nu$, as estimators of the first-order partial derivatives of the copula, we use the ``truncated'' finite-difference based estimators defined in~\eqref{eq:pd:est:nu:Delta:trunc} of the forthcoming section with bandwidths $h = h' = \min\{(l-k+1)^{-1/2}, 1/2\}$. As we will see from Proposition~\ref{prop:wc:nu:Delta}, the latter can satisfy Condition~\ref{cond:pd:est}. Finally, as in \cite{BucKojRohSeg14}, approximate p-values for $S_n^\nu$ can be computed via either
$$
\frac{1}{B} \sum_{b=1}^B \1 \Big(\hat S_n^{\sss [b],\nu} \geq S_n^\nu \Big) \quad \text{or} \quad \frac{1}{B} \sum_{b=1}^B \1 \Big(\check S_n^{\sss [b],\nu} \geq S_n^\nu \Big),
$$
for some large integer $B$. Theoretical results confirming that the above way of proceeding is asymptotically valid under the null can be obtained by starting from Theorem~\ref{thm:ae:Cb:n:hat:check}, proceeding as in \cite{BucKojRohSeg14} and finally using results stated in Section~4 of \cite{BucKoj19}.

If, for any $m \in \N$, the underlying smoothing distributions $\nu_{\bm u}^{\bm x}$, $\bm x \in (\R^d)^m$,  $\bm u \in [0,1]^d$, are Dirac measures at $\bm u$, the previous ingredients are non-smooth and the resulting test coincides exactly with the test studied in \cite{BucKojRohSeg14}. The test statistic will naturally be denoted by $S_n^{\sss \Dirac}$ in that case. As alternative smoothing distributions, we considered those leading to the empirical beta copula and specified in Remark~\ref{rem:beta} as well as at the end of Section~\ref{sec:ec}. The resulting statistic will then naturally be denoted by $S_n^{\sss \Bin}$. 

To compare the test based on $S_n^{\sss \Bin}$ to the test based on $S_n^{\sss \Dirac}$, we considered experiments similar to those reported in Section~5 of \cite{BucKojRohSeg14}. Both tests were carried out at the 5\% significance level using replicates of the form~\eqref{eq:check:mult} as these seemed to lead to better results. The dependent multiplier sequences necessary to carry out the tests were generated as explained in the last paragraph of Appendix~C of \cite{BucKojRohSeg14}. The value of the bandwidth parameter $\ell_n$ appearing in (M2) and (M3) in Section~\ref{sec:mult:seq} was chosen using the procedure described in \citet[Section~5]{BucKoj16} (although this way of proceeding may not be ``optimal'' for the smooth multiplier bootstrap replicates).

\begin{table}[t!]
\centering
\caption{Percentages of rejection of the null hypothesis of stationarity computed from 1000 samples of size $n \in \{25,50,100,200\}$ generated as explained in Section~\ref{sec:cp}, where $C$ is the bivariate Frank copula with a Kendall's tau of $\tau \in \{0,0.33,0.66\}$.} 
\label{tab:H0}
\begingroup\small
\begin{tabular}{lrrrrrrrrr}
  \hline
  \multicolumn{2}{c}{} & \multicolumn{2}{c}{$n=25$} & \multicolumn{2}{c}{$n=50$} & \multicolumn{2}{c}{$n=100$} & \multicolumn{2}{c}{$n=200$} \\ \cmidrule(lr){3-4} \cmidrule(lr){5-6} \cmidrule(lr){7-8} \cmidrule(lr){9-10} $\beta$ & $\tau$ & $S_n^{\sss \Dirac}$ & $S_n^{\sss \Bin}$ & $S_n^{\sss \Dirac}$ & $S_n^{\sss \Bin}$ & $S_n^{\sss \Dirac}$ & $S_n^{\sss \Bin}$ & $S_n^{\sss \Dirac}$ & $S_n^{\sss \Bin}$ \\ \hline
0 & 0.00 & 17.5 & 13.3 & 7.7 & 8.0 & 5.5 & 5.8 & 3.8 & 4.4 \\ 
   & 0.33 & 18.7 & 13.4 & 7.6 & 7.3 & 4.9 & 6.3 & 4.2 & 4.0 \\ 
   & 0.66 & 21.1 & 11.7 & 5.6 & 4.9 & 3.0 & 3.1 & 3.2 & 3.8 \\ 
  0.3 & 0.00 & 18.8 & 16.1 & 6.2 & 7.4 & 4.3 & 4.7 & 6.4 & 6.0 \\ 
   & 0.33 & 21.4 & 16.4 & 7.8 & 8.7 & 5.2 & 5.9 & 5.4 & 5.4 \\ 
   & 0.66 & 25.3 & 16.8 & 5.4 & 5.9 & 2.1 & 3.0 & 1.2 & 1.4 \\ 
  0.5 & 0.00 & 26.1 & 22.8 & 11.4 & 11.7 & 6.1 & 6.6 & 6.2 & 7.2 \\ 
   & 0.33 & 22.9 & 23.0 & 10.3 & 11.2 & 5.5 & 7.2 & 2.4 & 3.6 \\ 
   & 0.66 & 27.5 & 20.1 & 10.5 & 11.0 & 2.2 & 3.6 & 1.6 & 1.6 \\ 
   \hline
\end{tabular}
\endgroup
\end{table}

As a first experiment, we estimated the percentages of rejection of the null hypothesis of stationarity for data generated under the null. As data generating model, we used a bivariate AR(1) model. Specifically, let $\bm U_i$, $i \in \{ -100, \dots, n \}$, be a bivariate i.i.d.\ sample from a copula~$C$. Then, set $\bm \epsilon_i = (\Phi^{-1}(U_{i1}),\Phi^{-1}(U_{i2}))$, where $\Phi$ is the d.f.\ of the standard normal distribution, and $\bm X_{-100} = \bm \epsilon_{-100}$. Finally, for any $j \in \{ 1, 2\}$ and $i \in \{ -99, \dots, n \}$, compute recursively
$$
X_{ij} = \beta X_{i-1,j} + \epsilon_{ij},
$$
where the first 100 observations are used as a burn-out sample. 

We considered $n \in \{25, 50, 100, 200\}$, $C$ to be bivariate Frank copula with a Kendall's tau of $\tau \in \{0, 0.33, 0.66\}$ and $\beta \in \{0,0.3,0.5\}$. The corresponding rejection percentages are reported in Table~\ref{tab:H0}. As one can see, both tests appear to hold their level reasonably well when $n \in \{100, 200\}$. The tests should however clearly not be used when $n = 25$ but might be employed when $n=50$ in the case of weakly serially dependent data.

\begin{table}[t!]
\centering
\caption{Percentages of rejection of the null hypothesis of stationarity computed from 1000 samples of size $n \in \{50,100,200\}$ generated under $H_1$ as explained in Section~\ref{sec:cp}, where $k^\star = \ip{nt}$, $C_1$ and $C_2$ are both bivariate Frank copulas such that $C_1$ has a Kendall's tau of 0.2 and $C_2$ a Kendall's tau of $\tau \in \{0.4,0.6\}$.} 
\label{tab:H1}
\begingroup\small
\begin{tabular}{rrrrrrr}
  \hline
  \multicolumn{3}{c}{} & \multicolumn{2}{c}{$\beta=0$} & \multicolumn{2}{c}{$\beta=0.3$}  \\ \cmidrule(lr){4-5} \cmidrule(lr){6-7} $\tau$ & $n$ & $t$ & $S_n^{\sss \Dirac}$ & $S_n^{\sss \Bin}$ & $S_n^{\sss \Dirac}$ & $S_n^{\sss \Bin}$  \\ \hline
0.4 & 50 & 0.10 & 8.8 & 8.7 & 8.0 & 8.1 \\ 
   &  & 0.25 & 13.5 & 16.1 & 14.0 & 15.5 \\ 
   &  & 0.50 & 14.7 & 15.3 & 17.5 & 18.4 \\ 
   & 100 & 0.10 & 4.0 & 4.9 & 5.5 & 7.6 \\ 
   &  & 0.25 & 16.9 & 19.3 & 14.8 & 17.9 \\ 
   &  & 0.50 & 26.6 & 28.8 & 22.5 & 25.3 \\ 
   & 200 & 0.10 & 6.6 & 7.4 & 5.6 & 6.6 \\ 
   &  & 0.25 & 29.4 & 31.8 & 22.0 & 24.2 \\ 
   &  & 0.50 & 51.4 & 53.8 & 42.0 & 43.8 \\ 
  0.6 & 50 & 0.10 & 10.2 & 13.0 & 9.1 & 11.6 \\ 
   &  & 0.25 & 33.0 & 39.8 & 31.6 & 39.1 \\ 
   &  & 0.50 & 53.0 & 56.8 & 47.0 & 51.1 \\ 
   & 100 & 0.10 & 12.1 & 16.6 & 8.6 & 12.5 \\ 
   &  & 0.25 & 62.6 & 70.9 & 51.9 & 60.3 \\ 
   &  & 0.50 & 83.1 & 84.9 & 75.0 & 78.6 \\ 
   & 200 & 0.10 & 30.4 & 37.8 & 21.0 & 28.8 \\ 
   &  & 0.25 & 95.2 & 97.0 & 87.8 & 91.0 \\ 
   &  & 0.50 & 99.4 & 99.4 & 97.0 & 97.2 \\ 
   \hline
\end{tabular}
\endgroup
\end{table}

As a second experiment, we estimated rejection percentages of the null hypothesis of stationarity for data generated under $H_1$ in~\eqref{eq:H1}. To do so, we considered a similar data generating model as in the first experiment except that the $\bm U_i$'s for $i \in \{ -100, \dots, k^\star \}$ are i.i.d.\ from a copula $C_1$ while the $\bm U_i$'s for $i \in \{k^\star + 1,\dots, n \}$ are i.i.d.\ from a copula $C_2 \neq C_1$. Following \cite{BucKojRohSeg14}, we set $k^\star = \ip{nt}$ with $t \in \{0.1,0.25,0.5\}$ and considered $n \in \{50, 100, 200\}$, $C_1$ the bivariate Frank copula with a Kendall's tau $0.2$ and $C_2$ the bivariate Frank copula with a Kendall's tau in $\{ 0.4, 0.6\}$. The results are reported in Table~\ref{tab:H1}. As one can see, the test based on $S_n^{\sss \Bin}$ appears overall to be more powerful than the one based on $S_n^{\sss \Dirac}$. The largest differences in power tend to occur for $\tau=0.6$ and $t \in \{0.1,0.25\}$ which corresponds to the situation when the test statistic should be the largest because of a difference between an empirical copula computed from a small number of observations (approximately $\ip{nt}$) and an empirical copula computed from the remaining observations. While one cannot conclude that smooth change-point detection tests such as the one based on $S_n^{\sss \Bin}$ will be more powerful than the non-smooth test based on $S_n^{\sss \Dirac}$ in all situations, the obtained results confirm in part the intuition that smooth tests might be more sensitive to changes at the beginning or at the end of the data sequence.

\section{Estimators of the first-order partial derivatives of the copula}
\label{sec:pd}

The multiplier bootstrap replicates defined in the previous section all depend on the choice of estimators of the first-order partial derivatives of $C$. For asymptotic reasons, the latter were required to satisfy Condition~\ref{cond:pd:est}. Obviously, the more accurate such estimators, the better we can expect the multiplier bootstraps to behave, whether they involve smoothing or not. After recalling existing definitions of such estimators based on finite differences of the classical empirical copula, we define two related classes of smooth estimators. Then, upon an appropriate choice of the underlying bandwidth parameters, we establish their weak consistency in a sequential setting which implies that many of the considered estimators satisfy Condition~\ref{cond:pd:est}. In the last subsection, we report the results of bivariate and trivariate Monte Carlo experiments comparing selected estimators in terms of integrated mean squared error.

Note that, as already mentioned in the introduction, the results of this section can be of independent interest since, as discussed for instance in \cite{JanSwaVer16}, estimators of the first-order partial derivatives of a copula have applications in mean and quantile regression as they lead to estimators of the conditional distribution function. In particular, as we shall see, several estimators considered in our Monte Carlo experiments display a better finite-sample performance than the Bernstein estimator studied in \cite{JanSwaVer16}.

\subsection{Estimators based on finite differences of the empirical copula}

As already mentioned in Section~\ref{sec:non-smooth:mult}, in their seminal work on the multiplier bootstrap for the classical empirical copula process, \cite{RemSca09} considered estimators of the first-order partial derivatives $\dot C_j$, $j \in \{1,\dots,d\}$, of $C$ based on finite-differences of the empirical copula. In a sequential context, given a stretch $\Xc_{k:l} = (\bm X_k,\dots,\bm X_l)$, $1 \leq k \leq l \leq n$, of observations and two bandwidth parameters $h$ and $h'$ in $[0,1/2]$ such that $h + h' > 0$, a slightly more general definition of the aforementioned estimators is
\begin{equation}
  \label{eq:pd:est:nabla}
\dot C_{j,k:l,h,h'}^{\sss \nabla}(\bm u) = \frac{C_{k:l}\{(\bm u + h\bm e_j) \wedge \bm 1 \} - C_{k:l} \{ (\bm u - h' \bm e_j) \vee \bm 0 \} }{h+h'}, \qquad \bm u \in [0,1]^d,
\end{equation}
where $\bm e_j$ is the $j$th vector of the canonical basis of $\R^d$, $\bm 0 = (0,\dots,0)$, $\bm 1 = (1, \dots, 1) \in \R^d$, $\wedge$ (resp.\ $\vee$) denotes the minimum (resp.\ maximum) componentwise operator and $C_{k:l}$ is the classical empirical copula of $\Xc_{k:l}$ defined in~\eqref{eq:C:kl}. The symbol $\nabla$ indicates that the estimators are based on finite-differences of $C_{k:l}$ with ``right'' (resp.\ ``left'') bandwidth $h$ (resp.\ $h'$).

In order to reduce the bias of the previous estimator for evaluation points $\bm u \in [0,1]$ with $u_j \in [0,h') \cup (1-h,1]$, \cite{KojSegYan11} considered the following minor variation of~\eqref{eq:pd:est:nabla}:
\begin{equation}
  \label{eq:pd:est:Delta}
\dot C_{j,k:l,h,h'}^{\sss \Delta}(\bm u) = \frac{C_{k:l}\{(\bm u + h\bm e_j) \wedge \bm 1 \} - C_{k:l} \{ (\bm u - h' \bm e_j) \vee \bm 0 \} }{(u_j+h) \wedge 1 - (u_j - h') \vee  0}, \qquad \bm u \in [0,1]^d.
\end{equation}
Note the use of the symbol $\Delta$ still referring to finite-differences but upside-down compared to $\nabla$ to distinguish~\eqref{eq:pd:est:Delta} from~\eqref{eq:pd:est:nabla}. 

As is well known, in general, $\dot C_j$ exists almost everywhere on $[0,1]^d$ and, for those $\bm u \in [0,1]^d$ for which it exists, $0 \leq \dot C_j(\bm u) \leq 1$ \citep[see e.g.,][Theorem 2.2.7]{Nel06}. A natural modification of the estimators $\dot C_{j,k:l,h,h'}^{\sss \nabla}$ in~\eqref{eq:pd:est:nabla} and $\dot C_{j,k:l,h,h'}^{\sss \Delta}$ in~\eqref{eq:pd:est:Delta} thus consists of ensuring that they take their values in $[0,1]$ by truncating them:
\begin{align}
  \label{eq:pd:est:nabla:trunc}
  \dotu{C}_{j,k:l,h,h'}^{\sss \nabla} = (\dot C_{j,k:l,h,h'}^{\sss \nabla} \vee 0) \wedge 1,\\
  \label{eq:pd:est:Delta:trunc}
  \dotu{C}_{j,k:l,h,h'}^{\sss \Delta} = (\dot C_{j,k:l,h,h'}^{\sss \Delta} \vee 0) \wedge 1.
\end{align}
Notice that taking the maximum with 0 in the previous expressions is actually not necessary as the estimators in~\eqref{eq:pd:est:nabla} and~\eqref{eq:pd:est:Delta} cannot be negative since the empirical copula $C_{k:l}$ is a multivariate d.f. We nonetheless keep~\eqref{eq:pd:est:nabla:trunc} and~\eqref{eq:pd:est:Delta:trunc}  as they are to be consistent with certain forthcoming definitions. More generally, in the rest of this section, underlining will be used to denote estimators constrained to take their values in $[0,1]$.

\subsection{Two classes of smooth estimators}

To obtain smooth estimators of the first-order partial derivatives of $C$, the proposals in~\eqref{eq:pd:est:nabla} and~\eqref{eq:pd:est:Delta} can be extended in two natural ways. The first approach consists of considering finite-differences of smooth estimators of $C$. Given a stretch $\Xc_{k:l}$, $1 \leq k \leq l \leq n$, of observations and two bandwidth parameters $h$ and $h'$ in $[0,1/2]$ such that $h + h' > 0$, this leads to the estimators
\begin{align}
  \label{eq:pd:est:nu:nabla}
  \dot C_{j,k:l,h,h'}^{\sss \nu,\nabla}(\bm u) = \frac{C_{k:l}^\nu\{(\bm u + h\bm e_j) \wedge \bm 1 \} - C_{k:l}^\nu\{ (\bm u - h' \bm e_j) \vee \bm 0 \} }{h+h'}, \qquad \bm u \in [0,1]^d, \\
  \label{eq:pd:est:nu:Delta}
  \dot C_{j,k:l,h,h'}^{\sss \nu,\Delta}(\bm u) = \frac{C_{k:l}^\nu\{(\bm u + h\bm e_j) \wedge \bm 1 \} - C_{k:l}^\nu\{ (\bm u - h' \bm e_j) \vee \bm 0 \} }{(u_j+h) \wedge 1 - (u_j - h') \vee  0}, \qquad \bm u \in [0,1]^d,
\end{align}
where $C_{k:l}^\nu$ is the smooth empirical copula of $\Xc_{k:l}$ defined in~\eqref{eq:C:kl:nu}. Notice the order of the symbols $\nu$ and $\nabla$ (resp.~$\Delta$) indicating that the empirical copula is first smoothed before finite-differencing is applied. Clearly, \eqref{eq:pd:est:nabla} (resp.~\eqref{eq:pd:est:Delta}) is a particular case of~\eqref{eq:pd:est:nu:nabla} (resp.~\eqref{eq:pd:est:nu:Delta}) when the smoothing distributions $\nu_{\bm u}^{\sss \Xc_{k:l}}$, $\bm u \in [0,1]^d$,  in~\eqref{eq:C:kl:nu} are chosen to be Dirac measures at $\bm u \in [0,1]^d$.

\begin{remark}
  \label{rem:constraint}
  Since $C$ is a multivariate d.f.\ with standard uniform margins, we have that, for any $j \in \{1,\dots,d\}$ and $\bm u \in V_j$ (where $V_j$ is defined in Condition~\ref{cond:pd}), $\dot C_j(\bm u^{(j)}) = \lim_{h \to 0} \{ C(\bm u^{(j)} + h \bm e_j) - C(\bm u^{(j)}) \} / h = 1$ (where the notation $\bm u^{(j)}$ is defined above Theorem~\ref{thm:wc:Cb:n}). Interestingly enough, the estimator $\dot C_{j,k:l,h,h'}^{\sss \nu,\Delta}$ in~\eqref{eq:pd:est:nu:Delta} can satisfy this boundary constraint, that is, we can have $\dot C_{j,k:l,h,h'}^{\sss \nu,\Delta}(\bm u^{(j)}) = 1$. This will indeed happen if $C_{k:l}^\nu$ is a genuine copula, which according to Proposition~\ref{prop:genuine:copula}, can occur under Condition~\ref{cond:no:ties} for specific choices of the smoothing distributions in~\eqref{eq:C:kl:nu} such as those leading to the empirical copulas $C_{k:l}^{\sss \Bin}$ or $C_{k:l}^{\sss \BetaB}$ defined in Section~\ref{sec:ec}.
\end{remark}

By analogy with~\eqref{eq:pd:est:nabla:trunc} and~\eqref{eq:pd:est:Delta:trunc}, it is straightforward to define truncated versions of the estimators in~\eqref{eq:pd:est:nu:nabla} and~\eqref{eq:pd:est:nu:Delta} by
\begin{align}
  \label{eq:pd:est:nu:nabla:trunc}
  \dotu{C}_{j,k:l,h,h'}^{\sss \nu,\nabla} = (\dot C_{j,k:l,h,h'}^{\sss \nu,\nabla} \vee 0) \wedge 1,\\
  \label{eq:pd:est:nu:Delta:trunc}
  \dotu{C}_{j,k:l,h,h'}^{\sss \nu,\Delta} = (\dot C_{j,k:l,h,h'}^{\sss \nu,\Delta} \vee 0) \wedge 1.
\end{align}

\begin{remark}
  As discussed in Remark~\ref{rem:constraint}, the smoothing distributions $\nu_{\bm u}^{\sss \Xc_{k:l}}$, $\bm u \in [0,1]^d$, can be chosen such that $C_{k:l}^\nu$ is a genuine copula under Condition~\ref{cond:no:ties}. In that case, using the fact that $C_{k:l}^\nu$ is a multivariate d.f.\ with standard uniform margins, we immediately obtain \cite[see, e.g.,][Lemma~1.2.14]{DurSem15} that, for any $\bm u \in [0,1]^d$ and $h,h'$ in $[0,1/2]$ such that $h+h'>0$,
  $$
  C_{k:l}^\nu\{(\bm u + h\bm e_j) \wedge \bm 1 \} - C_{k:l}^\nu\{ (\bm u - h' \bm e_j) \vee \bm 0 \}  \leq (u_j+h) \wedge 1 - (u_j - h') \vee  0 ,
  $$
  which implies that $0 \leq \dot C_{j,k:l,h,h'}^{\sss \nu,\Delta} \leq 1$ and thus that truncation of $\dot C_{j,k:l,h,h'}^{\sss \nu,\Delta}$ is not necessary in that case since $\dotu{C}_{j,k:l,h,h'}^{\sss \nu,\Delta}$ in~\eqref{eq:pd:est:nu:Delta:trunc} is equal to $\dot C_{j,k:l,h,h'}^{\sss \nu,\Delta}$. 
\end{remark}

By analogy with~\eqref{eq:C:kl:nu}, a second natural approach to obtain smooth partial derivative estimators consists of directly smoothing~\eqref{eq:pd:est:nabla} and~\eqref{eq:pd:est:Delta} and leads to the estimators
\begin{align}
  \label{eq:pd:est:nabla:nu}
\dot C_{j,k:l,h,h'}^{\sss \nabla,\nu}(\bm u) =  \int_{[0,1]^d} \dot C_{j,k:l,h,h'}^{\sss \nabla}(\bm w) \dd \nu_{\bm u}^{\sss \Xc_{k:l}} (\bm w), \qquad \bm u \in [0,1]^d, \\
  \label{eq:pd:est:Delta:nu}
\dot C_{j,k:l,h,h'}^{\sss \Delta,\nu}(\bm u) =  \int_{[0,1]^d} \dot C_{j,k:l,h,h'}^{\sss \Delta}(\bm w) \dd \nu_{\bm u}^{\sss \Xc_{k:l}} (\bm w),
\qquad \bm u \in [0,1]^d.
\end{align}
This time the order of the symbols $\nu$ and $\nabla$ (resp.~$\Delta$) is reversed indicating that it is the finite-differences-based estimator $\dot C_{j,k:l,h,h'}^{\sss \nabla}$ in~\eqref{eq:pd:est:nabla} (resp.~$\dot C_{j,k:l,h,h'}^{\sss \Delta}$ in~\eqref{eq:pd:est:Delta}) that is smoothed. Versions of these estimators that necessarily take their values in $[0,1]$ can be obtained by constructing them from the truncated estimators~\eqref{eq:pd:est:nabla:trunc} and \eqref{eq:pd:est:Delta:trunc} instead, leading respectively to
\begin{align}
  \label{eq:pd:est:nabla:nu:trunc}
\dotu{C}_{j,k:l,h,h'}^{\sss \nabla,\nu}(\bm u) =  \int_{[0,1]^d} \dotu{C}_{j,k:l,h,h'}^{\sss \nabla}(\bm w) \dd \nu_{\bm u}^{\sss \Xc_{k:l}} (\bm w), \qquad \bm u \in [0,1]^d, \\
  \label{eq:pd:est:Delta:nu:trunc}
\dotu{C}_{j,k:l,h,h'}^{\sss \Delta,\nu}(\bm u) =  \int_{[0,1]^d} \dotu{C}_{j,k:l,h,h'}^{\sss \Delta}(\bm w) \dd \nu_{\bm u}^{\sss \Xc_{k:l}} (\bm w),
\qquad \bm u \in [0,1]^d.
\end{align}

Note that a third approach to obtain a smooth estimator of the $j$th partial derivative $\dot C_j$ would consist of attempting to directly differentiate $C_{k:l}^\nu$ in~\eqref{eq:C:kl:nu} with respect to its $j$th argument (provided of course that $C_{k:l}^\nu$ is differentiable). The resulting estimator
\begin{equation*}
\dot C_{j,k:l}^\nu =  \frac{\partial C_{k:l}^\nu}{\partial u_j}
\end{equation*}
may exist only on the set $V_j$ defined in Condition~\ref{cond:pd}. This is the path followed by \cite{JanSwaVer16}, who, for some integer $m \geq 2$, started from the empirical Bernstein copula $C_{k:l,m}^{\sss \Bern}$ in~\eqref{eq:C:Bern:kl:m} (which, as discussed in Remark~\ref{rem:beta}, is a particular case of $C_{k:l}^\nu$ in~\eqref{eq:C:kl:nu}). Let $\dot C_{j,k:l,m}^{\sss \Bern} =  \partial C_{k:l,m}^{\sss \Bern} / \partial u_j$ be the resulting estimator. Interestingly enough, from Lemma~\ref{lem:Ber:pd} in Appendix~\ref{app:pd},  we have that
\begin{equation}
  \label{eq:Bern:pd}
  \dot C_{j,k:l,m}^{\sss \Bern}(\bm u) = \int_{[0,1]^d} \dot C_{j,k:l,\frac{1}{m},0}^{\sss \nabla}(\bm w) \dd \tilde \mu_{j,m,\bm u}(\bm w),  \qquad \bm u \in V_j,
\end{equation}
where $\dot C_{j,k:l,\frac{1}{m},0}^{\sss \nabla}$ is given by~\eqref{eq:pd:est:nabla} with $h = 1/m$ and $h'=0$ and, for any $\bm u \in [0,1]^d$, $\tilde \mu_{j,m,\bm u}$ is the law of the random vector $(\tilde S_{m,1,u_1}/m,\dots, \tilde S_{m,d,u_d}/m)$ whose components are independent such that, for $i \in \{1,\dots,d\} \setminus \{j\}$, $\tilde S_{m,i,u_i}$ is Binomial$(m,u_i)$ while $\tilde S_{m,j,u_j}$ is Binomial$(m-1,u_j)$. In other words, differentiating directly the empirical Bernstein copula $C_{k:l,m}^{\sss \Bern}$ in~\eqref{eq:C:Bern:kl:m} with respect to its $j$th argument leads to a special case of the estimator in~\eqref{eq:pd:est:nabla:nu}. Notice that, since the measures $\tilde \mu_{j,m,\bm u}$ are well-defined for any $\bm u \in [0,1]^d$, the integral in~\eqref{eq:Bern:pd} is actually well-defined for any $\bm u \in [0,1]^d$. Hence, as we continue, we take~\eqref{eq:Bern:pd} with $\bm u \in [0,1]^d$ as the definition of $\dot C_{j,k:l,m}^{\sss \Bern}$.

The following result, proven in Appendix~\ref{app:pd}, shows that $\dot C_{j,k:l,m}^{\sss \Bern}$ can be easily computed.

\begin{prop}
  \label{prop:Bern:pd:comp}
Given a stretch $\Xc_{k:l}$, $1 \leq k \leq l \leq n$, of observations, we have that, for any $j \in \{1,\dots,d\}$, $\bm u \in [0,1]^d$ and integer $m \geq 2$,
\begin{multline}
\label{eq:Bern:pd:comp}
\dot C_{j,k:l,m}^{\sss \Bern}(\bm u) = \frac{m}{l-k+1} \sum_{i=k}^l b_{m-1, u_j}\left\{\up{mR_{ij}^{k:l}/(l-k+1)} - 1\right\}  \\ \times \prod_{t=1 \atop t \neq j}^d \bar B_{m, u_t}\left\{\up{mR_{it}^{k:l}/(l-k+1)} - 1\right\},
\end{multline}
where $\up{\cdot}$ denotes the ceiling function and, for any $p \in \N$ and $u \in [0,1]$, $\bar B_{p, u}$ (resp.\ $b_{p, u}$) is the survival (resp.\ probability mass) function of the Binomial$(p,u)$.
\end{prop}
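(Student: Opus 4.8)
The plan is to read \eqref{eq:Bern:pd} as an expectation, namely $\dot C_{j,k:l,m}^{\sss \Bern}(\bm u) = \Ex\{\dot C_{j,k:l,\frac{1}{m},0}^{\sss \nabla}(\bm W)\}$ where $\bm W = (\tilde S_{m,1,u_1}/m,\dots,\tilde S_{m,d,u_d}/m)$ has law $\tilde\mu_{j,m,\bm u}$, and then to compute this expectation explicitly. First I would expand the integrand. Since $h=1/m$ and $h'=0$, combining the definition \eqref{eq:pd:est:nabla} with the definition \eqref{eq:C:kl} of $C_{k:l}$ gives, for $\bm w \in [0,1]^d$,
\[
\dot C_{j,k:l,\frac{1}{m},0}^{\sss \nabla}(\bm w) = \frac{m}{l-k+1}\sum_{i=k}^l \Bigl(\prod_{t\neq j}\1\bigl(\tfrac{R_{it}^{k:l}}{l-k+1}\le w_t\bigr)\Bigr)\1\bigl(w_j<\tfrac{R_{ij}^{k:l}}{l-k+1}\le (w_j+\tfrac1m)\wedge 1\bigr),
\]
because the finite difference alters only the $j$th factor of each product and the resulting difference of indicators collapses to a single half-open indicator.

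Next I would take the expectation over $\bm W$ and use the independence of its components asserted by $\tilde\mu_{j,m,\bm u}$, so that by linearity the sum over $i$ comes out and each summand factors as a product over the $d$ coordinates. For $t\neq j$, writing $c_t = mR_{it}^{k:l}/(l-k+1)$, the coordinate factor is $\Pr(\tilde S_{m,t,u_t}\ge c_t)$; since $\tilde S_{m,t,u_t}$ is integer-valued, the threshold may be replaced by $\up{c_t}$, and as $\tilde S_{m,t,u_t}$ is $\mathrm{Binomial}(m,u_t)$ this equals $\bar B_{m,u_t}\{\up{c_t}-1\}$, matching the product in \eqref{eq:Bern:pd:comp}. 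The identity $\Pr(S\ge c)=\Pr(S\ge \up{c})=\bar B(\up{c}-1)$ for integer-valued $S$ is the one fact I would state explicitly and reuse.

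The crux is the $j$th coordinate factor. Writing $s=\tilde S_{m,j,u_j}\in\{0,\dots,m-1\}$ and $c = mR_{ij}^{k:l}/(l-k+1)\in(0,m]$, the retained indicator is $\1\bigl(s/m < c/m \le (s/m+1/m)\wedge 1\bigr)$; the truncation at $1$ is inactive precisely because $s\le m-1$, which is exactly why $\tilde S_{m,j,u_j}$ was taken $\mathrm{Binomial}(m-1,u_j)$, so this reduces to $\1(s<c\le s+1)$. A short case distinction according to whether $c$ is an integer shows that $s<c\le s+1$ holds for the single value $s=\up{c}-1$, which lies in $\{0,\dots,m-1\}$ since $c\in(0,m]$. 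Hence this factor equals $\Pr(\tilde S_{m,j,u_j}=\up{c}-1)=b_{m-1,u_j}\{\up{c}-1\}$. Assembling the coordinate factors under the sum over $i$ with the prefactor $m/(l-k+1)$ then yields \eqref{eq:Bern:pd:comp}.

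I expect the only delicate point to be this last step: one must check that the half-open grid cell of width $1/m$ captures exactly one atom of the $\mathrm{Binomial}(m-1,u_j)$ law, identify that atom through the ceiling function uniformly in whether $c$ is an integer, and confirm the atom stays in the valid support. Everything else — the collapse of the difference of indicators, the factorization by independence, and the ceiling identity for survival probabilities of integer-valued variables — is routine once stated.
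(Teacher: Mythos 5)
Your proof is correct and takes essentially the same route as the paper's: both evaluate the integral in~\eqref{eq:Bern:pd} by expanding $C_{k:l}$ as a sum of indicators, factoring each summand through the independence of the coordinates of $\tilde \mu_{j,m,\bm u}$, and invoking the ceiling identity $\Pr(S \geq x) = \bar B(\up{x}-1)$ for integer-valued $S$. The only difference is organizational: you collapse the finite difference into a single half-open-interval indicator at the outset, so the $j$th factor is directly the probability of one binomial atom, whereas the paper computes the two integrals in~\eqref{eq:Bern:pd:split} separately and recovers the same point mass at the end via $\bar B_{m-1, u_j}\left\{\up{c} - 2\right\} - \bar B_{m-1, u_j}\left\{\up{c} - 1\right\} = b_{m-1,u_j}\left\{\up{c}-1\right\}$, which is the identical fact in a different guise.
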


\subsection{Weak consistency}

In order to study the weak consistency of the estimators of the partial derivatives of $C$ defined in the previous subsection, it is necessary to link the bandwidth parameters in their expressions to the data (or, at least, to the amount of data) from which these estimators are computed. As we continue, for any $n \in \N$ and any potential $d$-dimensional data set $\bm x \in (\R^d)^n$, $h(\bm x)$ and $h'(\bm x)$ will denote the values of the left and right bandwidths for the data set $\bm x$. With this in mind, in the rest of this subsection, for the sake of a more compact notation, we shall write $\dot C_{j,k:l}^{\sss \nu,\nabla}$ (resp.\  $\dot C_{j,k:l}^{\sss \nu,\Delta}$, $\dot C_{j,k:l}^{\sss \nabla,\nu}$, \dots) for $\dot C_{j,k:l,h,h'}^{\sss \nu,\nabla}$ (resp.\ $\dot C_{j,k:l,h,h'}^{\sss \nu,\Delta}$, $\dot C_{j,k:l,h,h'}^{\sss \nabla,\nu}$, \dots) with the understanding that $h = h(\Xc_{k:l})$ and $h'= h'(\Xc_{k:l})$ are random variables. We impose in addition the following condition on the bandwidths.

\begin{cond}[Bandwidth condition]
 \label{cond:band}
There exists positive sequences $b_n \downarrow 0$ and $b_n' \downarrow 0$ and constants $L_2 \geq L_1 > 0$ such that, for all $n \in \N$, $b_n + b'_n  \geq n^{-1/2}$, and, for any $\bm x \in (\R^d)^n$, $L_1 b_n  \leq h(\bm x) \leq (L_2 b_n) \wedge 1/2$ and $L_1 b_n'  \leq h'(\bm x) \leq (L_2 b_n') \wedge 1/2$.
\end{cond}

As we shall see in Section~\ref{sec:MC:pd}, one meaningful possibility among many others is to consider that, for any $n \in \N$ and $\bm x \in \R^d$, the left and right bandwidths for the data set~$\bm x$ are defined by $h(\bm x) = h'(\bm x) = [M_2 \{1 - |\tau(\bm x)|\}^a + M_1] n^{-1/2} \wedge 1/2$, where $M_1,M_2 > 0$ are constants, $\tau(\bm x) \in [-1,1]$ is the value of the sample version of a suitable multivariate extension of Kendall's tau for the data set~$\bm x$ and $a \in (0,\infty)$ is a fixed power. Roughly speaking, the bandwidths will be larger (resp. smaller) in the case of weakly (resp.\ strongly) cross-sectionally dependent data. It is easy to verify that Condition~\ref{cond:band} holds for the previous definitions.

The following result, proven in Appendix~\ref{app:pd}, establishes the weak consistency of the smooth estimators of the first class in a sequential setting.

\begin{prop}[Weak consistency in a sequential setting for the first class of smooth estimators]
\label{prop:wc:nu:Delta}
Under Conditions~\ref{cond:pd},~\ref{cond:var:W},~\ref{cond:Bn} and~\ref{cond:band}, for any $j \in \{1,\dots,d\}$, $\delta \in (0,1)$ and $\eps \in (0,1/2)$,
\begin{equation}
  \label{eq:wc:nu:Delta}
  \sup_{\substack{(s,t) \in \Lambda \\ t-s \geq \delta}} \sup_{\substack{\bm u \in [0,1]^d\\ u_j \in [\eps, 1-\eps]}}  \left| \dot C_{j,\ip{ns}+1:\ip{nt}}^{\nu, \Delta}(\bm u) - \dot C_j(\bm u) \right| = o_\Pr(1),
\end{equation}
where $\dot C_{j,k:l}^{\nu, \Delta}$ is defined in~\eqref{eq:pd:est:nu:Delta}, and similarly for $\dot C_{j,k:l}^{\sss \nu,\nabla}$ in~\eqref{eq:pd:est:nu:nabla}, $\dotu{C}_{j,k:l}^{\sss \nu,\nabla}$ in~\eqref{eq:pd:est:nu:nabla:trunc} and $\dotu{C}_{j,k:l}^{\sss \nu,\Delta}$ in~\eqref{eq:pd:est:nu:Delta:trunc}.
\end{prop}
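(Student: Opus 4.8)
The plan is to work directly with the estimator $\dot C_{j,\ip{ns}+1:\ip{nt}}^{\nu,\Delta}$ in~\eqref{eq:pd:est:nu:Delta} and to reduce the claim to a deterministic \emph{bias} contribution plus a \emph{stochastic} contribution carried by the smooth sequential empirical copula process $\Cb_n^\nu$ in~\eqref{eq:Cb:n:nu}. Writing $m = \ip{nt} - \ip{ns}$ and using $\sqrt{n}\,\lambda_n(s,t) = m/\sqrt{n}$, I would first record that, whenever $m>0$,
$$
C_{\ip{ns}+1:\ip{nt}}^\nu(\bm v) - C(\bm v) = \frac{\sqrt{n}}{m}\,\Cb_n^\nu(s,t,\bm v).
$$
By Condition~\ref{cond:band} the bandwidths $h = h(\Xc_{\ip{ns}+1:\ip{nt}})$ and $h' = h'(\Xc_{\ip{ns}+1:\ip{nt}})$ obey $h,h' \le L_2\max(b_m,b'_m)$, and since $t-s\ge\delta$ forces $m\ge n\delta-1\to\infty$, monotonicity of $b_\cdot$ gives $\max(h,h')\le\theta_n := L_2\{b_{\up{n\delta-1}} + b'_{\up{n\delta-1}}\}\to 0$ \emph{uniformly} in $(s,t)$. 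Hence, for $n$ large enough that $\theta_n<\eps/2$, the minima and maxima with $\bm 1$ and $\bm 0$ in~\eqref{eq:pd:est:nu:Delta} are inactive on $\{u_j\in[\eps,1-\eps]\}$, the denominator reduces to $h+h'$, and there the estimator~\eqref{eq:pd:est:nu:Delta} coincides with~\eqref{eq:pd:est:nu:nabla}; this yields the decomposition
$$
\dot C_{j,\ip{ns}+1:\ip{nt}}^{\nu,\Delta}(\bm u) - \dot C_j(\bm u) = A_n(s,t,\bm u) + B_n(s,t,\bm u),
$$
with $A_n = \{C(\bm u+h\bm e_j)-C(\bm u-h'\bm e_j)\}/(h+h') - \dot C_j(\bm u)$ and $B_n = \frac{\sqrt{n}}{m(h+h')}\{\Cb_n^\nu(s,t,\bm u+h\bm e_j) - \Cb_n^\nu(s,t,\bm u-h'\bm e_j)\}$.

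For the bias $A_n$, I would argue as follows. On $\{u_j\in[\eps,1-\eps]\}$ with $\max(h,h')<\eps/2$, the segment $\{\bm u + v\bm e_j : v\in[-h',h]\}$ lies in $V_j$, so by Condition~\ref{cond:pd} the map $v\mapsto C(\bm u+v\bm e_j)$ is continuously differentiable there with derivative $\dot C_j$. The mean value theorem then gives $A_n(s,t,\bm u) = \dot C_j(\bm u_\xi) - \dot C_j(\bm u)$, where $\bm u_\xi$ is the vector obtained from $\bm u$ by replacing its $j$th entry by some $\xi$ with $|\xi-u_j|\le\max(h,h')\le\theta_n$. Since $\dot C_j$ is continuous on $V_j$, hence uniformly continuous on the compact set $\{u_j\in[\eps/2,1-\eps/2]\}$ that contains both $u_j$ and $\xi$, the bias is bounded by the modulus of continuity $\omega(\theta_n)\to 0$, which is deterministic and uniform over the index set.

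For the stochastic term $B_n$, I would combine Theorems~\ref{thm:wc:Cb:n} and~\ref{thm:Cb:n:nu}, valid under Conditions~\ref{cond:pd}, \ref{cond:var:W} and~\ref{cond:Bn}, to obtain $\Cb_n^\nu\leadsto\Cb_C$ in $\ell^\infty(\Lambda\times[0,1]^d)$ with $\Cb_C$ having almost surely continuous trajectories. As $\Lambda\times[0,1]^d$ is compact, this entails asymptotic uniform equicontinuity in probability, so that $\tilde w_n(\eta):=\sup_{(s,t)}\sup_{\|\bm v-\bm v'\|\le\eta}|\Cb_n^\nu(s,t,\bm v)-\Cb_n^\nu(s,t,\bm v')|$ satisfies $\tilde w_n(\theta_n)=o_\Pr(1)$. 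Because the two evaluation points differ only in the $j$th coordinate by $h+h'\le\theta_n$, the increment of $\Cb_n^\nu$ is at most $\tilde w_n(\theta_n)$. For the prefactor, the lower bandwidth bound together with $b_m+b'_m\ge m^{-1/2}$ gives $m(h+h')\ge L_1 m^{1/2}$, whence $\frac{\sqrt{n}}{m(h+h')}\le\frac{\sqrt{n}}{L_1\sqrt{m}}\le\frac{1}{L_1}\sqrt{2/\delta}$ for $n$ large, uniformly over $t-s\ge\delta$. Thus $\sup|B_n|\le\frac{1}{L_1}\sqrt{2/\delta}\,\tilde w_n(\theta_n)=o_\Pr(1)$, establishing~\eqref{eq:wc:nu:Delta}; the same computation covers $\dot C_{j,k:l}^{\sss\nu,\nabla}$ since it agrees with $\dot C_{j,k:l}^{\sss\nu,\Delta}$ on $\{u_j\in[\eps,1-\eps]\}$. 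Finally, for the truncated estimators $\dotu{C}_{j,k:l}^{\sss\nu,\nabla}$ and $\dotu{C}_{j,k:l}^{\sss\nu,\Delta}$, I would use that the truncation $x\mapsto(x\vee 0)\wedge 1$ is $1$-Lipschitz and fixes $\dot C_j(\bm u)\in[0,1]$ (valid since $\bm u\in V_j$), so $|\dotu{C}_{j,k:l}^{\sss\nu,\Delta}(\bm u)-\dot C_j(\bm u)|\le|\dot C_{j,k:l}^{\sss\nu,\Delta}(\bm u)-\dot C_j(\bm u)|$ and the conclusion is inherited.

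The main obstacle is the bookkeeping forced by the sequential setting, where the bandwidths are random and depend on $(s,t)$ through the substretch length $m$. One must use the upper bandwidth bound to make both the bias and the spacing $\theta_n$ vanish uniformly over $t-s\ge\delta$, \emph{and simultaneously} the lower bandwidth bound with $b_m+b'_m\ge m^{-1/2}$ to keep the prefactor $\sqrt{n}/\{m(h+h')\}$ bounded; the two requirements pull in opposite directions and must be reconciled uniformly in $(s,t)$. The delicate point is that asymptotic equicontinuity of $\Cb_n^\nu$ is evaluated along the data-dependent spacing $h+h'$, which is legitimate precisely because $h+h'$ is dominated by the \emph{deterministic} null sequence $\theta_n$, allowing the random shift to be absorbed into the uniform modulus $\tilde w_n(\theta_n)$.
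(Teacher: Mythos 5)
Your proposal is correct and takes essentially the same route as the paper's proof: the same decomposition into a bias term (handled by the mean value theorem and uniform continuity of $\dot C_j$ on $\{\bm u : u_j \in [\eps/2,1-\eps/2]\}$) and a stochastic term (handled by asymptotic uniform equicontinuity in probability of $\Cb_n^\nu$, which follows from Theorems~\ref{thm:wc:Cb:n} and~\ref{thm:Cb:n:nu}, with the prefactor controlled uniformly over $t-s \geq \delta$ via the lower bandwidth bound and $b_m+b'_m \geq m^{-1/2}$), after observing that the truncations in~\eqref{eq:pd:est:nu:Delta} are inactive on $\{u_j \in [\eps,1-\eps]\}$ for $n$ large. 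Your inline treatment of $\dot C_{j,k:l}^{\sss \nu,\nabla}$ and your explicit $1$-Lipschitz-projection argument for the truncated estimators are simply unpacked versions of the paper's Lemma~\ref{lem:ae:nabla:Delta} and of its appeal to the continuous mapping theorem, respectively.
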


\begin{remark}
An inspection of the proof of the previous result reveals that the second supremum in~\eqref{eq:wc:nu:Delta} can be replaced by a supremum over $\bm u \in [0,1]^d$ if $\dot C_j$ happens to be continuous on $[0,1]^d$ instead of only satisfying Condition~\ref{cond:pd}; see also \cite{KojSegYan11}.
\end{remark}

As a consequence of the previous proposition, we have that, under the conditions of Proposition~\ref{prop:wc:nu:Delta}, the estimators $\dotu{C}_{j,k:l}^{\sss \nu,\nabla}$ and $\dotu{C}_{j,k:l}^{\sss \nu,\Delta}$ satisfy Condition~\ref{cond:pd:est} since they are bounded in absolute value (by one) by construction.

The next result is an immediate corollary of Proposition~\ref{prop:wc:nu:Delta} since the estimator in~\eqref{eq:pd:est:nabla} (resp.~\eqref{eq:pd:est:Delta}) is a particular case of the one in~\eqref{eq:pd:est:nu:nabla} (resp.~\eqref{eq:pd:est:nu:Delta}) when the smoothing distributions $\nu_{\bm u}^{\sss \Xc_{k:l}}$, $\bm u \in [0,1]^d$, in~\eqref{eq:C:kl:nu} are chosen to be Dirac measures at $\bm u \in [0,1]^d$ (the latter clearly satisfy Condition~\ref{cond:var:W}).

\begin{cor}[Weak consistency in a sequential setting for the non-smooth finite-differences-based estimators]
\label{cor:wc:Delta}
Under Conditions~\ref{cond:pd},~\ref{cond:Bn} and~\ref{cond:band}, for any $j \in \{1,\dots,d\}$, $\delta \in (0,1)$ and $\eps \in (0,1/2)$,
\begin{equation*}
  \sup_{\substack{(s,t) \in \Lambda \\ t-s \geq \delta}} \sup_{\substack{\bm u \in [0,1]^d\\ u_j \in [\eps, 1-\eps]}}  \left| \dot C_{j,\ip{ns}+1:\ip{nt}}^{\sss \Delta}(\bm u) - \dot C_j(\bm u) \right| = o_\Pr(1),
\end{equation*}
where $\dot C_{j,k:l}^{\sss \Delta}$ is defined in~\eqref{eq:pd:est:Delta}, and similarly for $\dot C_{j,k:l}^{\sss \nabla}$ in~\eqref{eq:pd:est:nabla}, $\dotu{C}_{j,k:l}^{\sss \nabla}$ in~\eqref{eq:pd:est:nabla:trunc} and $\dotu{C}_{j,k:l}^{\sss \Delta}$ in~\eqref{eq:pd:est:Delta:trunc}.
\end{cor}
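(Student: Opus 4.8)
The plan is to exploit the fact, already signaled in the text preceding the statement, that each non-smooth finite-difference estimator is \emph{exactly} the corresponding $\nu$-smooth estimator of Proposition~\ref{prop:wc:nu:Delta} obtained by taking the smoothing distributions to be Dirac measures. Concretely, I would fix, for every substretch $\Xc_{k:l}$ and every $\bm u \in [0,1]^d$, the smoothing distribution $\nu_{\bm u}^{\sss \Xc_{k:l}}$ to be the Dirac measure at $\bm u$ (a choice already adopted by convention in the framework for empty stretches). With this choice, the defining integral~\eqref{eq:C:kl:nu} collapses and yields $C_{k:l}^\nu = C_{k:l}$ pointwise, so that the smooth estimators in~\eqref{eq:pd:est:nu:nabla} and~\eqref{eq:pd:est:nu:Delta} reduce term by term to the non-smooth estimators in~\eqref{eq:pd:est:nabla} and~\eqref{eq:pd:est:Delta}, and likewise their truncated versions~\eqref{eq:pd:est:nu:nabla:trunc} and~\eqref{eq:pd:est:nu:Delta:trunc} reduce to~\eqref{eq:pd:est:nabla:trunc} and~\eqref{eq:pd:est:Delta:trunc}. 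It therefore suffices to check that this particular choice of smoothing distributions is admissible under the hypotheses of Proposition~\ref{prop:wc:nu:Delta}.

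The only hypothesis of Proposition~\ref{prop:wc:nu:Delta} that constrains the smoothing distributions is Condition~\ref{cond:var:W}, and I would verify that it holds trivially here. Under Dirac smoothing the vector $\bm W_{\bm u}^{\bm x}$ equals the constant $\bm u$ (realized by the constant map in the underlying construction), so each marginal $W_{j,u_j}^{\bm x}$ has variance $0$, and the bound $\Var(W_{j,u_j}^{\bm x}) = 0 \leq \kappa\, u_j(1-u_j)/n$ holds for every $n$, every $\bm u$ and any constant $\kappa > 0$. The remaining hypotheses, Conditions~\ref{cond:pd}, \ref{cond:Bn} and~\ref{cond:band}, do not involve the smoothing distributions at all: Condition~\ref{cond:pd} concerns only the partial derivatives of the target copula $C$, Condition~\ref{cond:Bn} concerns only the weak convergence of the uniform sequential process $\Bb_n(0,\cdot,\cdot)$, and Condition~\ref{cond:band} concerns only the bandwidths. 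Hence they transfer verbatim from the hypotheses of the corollary, which is precisely why Condition~\ref{cond:var:W} may be dropped from the statement.

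With admissibility established, I would simply invoke Proposition~\ref{prop:wc:nu:Delta} for this choice of smoothing distributions. Its conclusion for $\dot C_{j,k:l}^{\sss \nu,\Delta}$ becomes precisely the asserted uniform consistency of $\dot C_{j,k:l}^{\sss \Delta}$, and its companion statements for $\dot C_{j,k:l}^{\sss \nu,\nabla}$, $\dotu{C}_{j,k:l}^{\sss \nu,\nabla}$ and $\dotu{C}_{j,k:l}^{\sss \nu,\Delta}$ become the asserted consistency of $\dot C_{j,k:l}^{\sss \nabla}$, $\dotu{C}_{j,k:l}^{\sss \nabla}$ and $\dotu{C}_{j,k:l}^{\sss \Delta}$, respectively. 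Since the argument is a direct specialization, there is no genuine obstacle; the only point requiring care---and the only place where the dropped hypothesis Condition~\ref{cond:var:W} is recovered---is the elementary observation that the degenerate Dirac smoothing has zero marginal variance and hence automatically satisfies the variance bound.
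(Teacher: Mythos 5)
Your proposal is correct and is exactly the paper's own argument: the text immediately preceding the corollary derives it by specializing Proposition~\ref{prop:wc:nu:Delta} to Dirac smoothing distributions, which reduce the smooth estimators to the non-smooth ones and trivially satisfy Condition~\ref{cond:var:W} (zero variance). Your additional verification that the Dirac choice is admissible in the underlying framework is a careful but equivalent elaboration of the paper's parenthetical remark.
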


We now move to the second class of smooth estimators of the partial derivatives. As we shall see below, to establish their weak consistency, it suffices, among others, that the underlying smoothing distributions satisfy the following weaker version of Condition~\ref{cond:var:W}.

\begin{cond}[Weak variance condition]
  \label{cond:var:W:weak}
  There exists a positive sequence $a_n \downarrow 0$ such that, for any $n \in \N$, $\bm x \in (\R^d)^n$, $\bm u \in [0,1]^d$ and $j \in \{1,\dots,d\}$, $\Var( W_{j,u_j}^{\bm x}) \leq a_n$.
\end{cond}

The following result is proven in Appendix~\ref{app:pd}.

\begin{prop}[Weak consistency in a sequential setting for the second class of smooth estimators]
\label{prop:wc:Delta:nu:trunc}
Under Conditions~\ref{cond:pd},~\ref{cond:Bn},~\ref{cond:band} and~\ref{cond:var:W:weak}, for any $j \in \{1,\dots,d\}$, $\delta \in (0,1)$ and $\eps \in (0,1/2)$,
\begin{align}
  \label{eq:wc:Delta:nu:trunc}
  \sup_{\substack{(s,t) \in \Lambda \\ t-s \geq \delta}} \sup_{\substack{\bm u \in [0,1]^d\\ u_j \in [\eps, 1-\eps]}}  \left| \dotu{C}_{j,\ip{ns}+1:\ip{nt}}^{\sss \Delta,\nu}(\bm u) - \dot C_j(\bm u) \right| = o_\Pr(1),
\end{align}
where $\dotu{C}_{j,k:l}^{\sss \Delta,\nu}$ is defined in~\eqref{eq:pd:est:Delta:nu:trunc}, and similarly for $\dotu{C}_{j,k:l}^{\sss \nabla,\nu}$ in~\eqref{eq:pd:est:nabla:nu:trunc}.
\end{prop}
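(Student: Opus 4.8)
The plan is to fix $j \in \{1,\dots,d\}$, $\delta \in (0,1)$ and $\eps \in (0,1/2)$, to treat the $\Delta$ variant (the $\nabla$ variant being handled verbatim, since both truncated estimators take values in $[0,1]$ and both are covered by Corollary~\ref{cor:wc:Delta}), and to exploit that the whole construction is a genuine average: writing $k = \ip{ns}+1$, $l = \ip{nt}$ and $m = l-k+1$, and using that $\nu_{\bm u}^{\sss \Xc_{k:l}}$ is a probability measure so that $\dot C_j(\bm u) = \int_{[0,1]^d} \dot C_j(\bm u)\,\dd\nu_{\bm u}^{\sss \Xc_{k:l}}(\bm w)$, I would start from the decomposition
\begin{equation*}
\dotu{C}_{j,k:l}^{\sss \Delta,\nu}(\bm u) - \dot C_j(\bm u)
= \int_{[0,1]^d} \bigl\{ \dotu{C}_{j,k:l}^{\sss \Delta}(\bm w) - \dot C_j(\bm w) \bigr\}\, \dd\nu_{\bm u}^{\sss \Xc_{k:l}}(\bm w)
+ \int_{[0,1]^d} \bigl\{ \dot C_j(\bm w) - \dot C_j(\bm u) \bigr\}\, \dd\nu_{\bm u}^{\sss \Xc_{k:l}}(\bm w),
\end{equation*}
and call the two integrals $T_1(s,t,\bm u)$ and $T_2(s,t,\bm u)$. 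It then suffices to show that the suprema of $|T_1|$ and $|T_2|$ over the index set of the statement are $o_\Pr(1)$.

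For $T_2$, which is the easier term, I would use that $\dot C_j(\bm w),\dot C_j(\bm u)\in[0,1]$ and split the integral according to whether $\|\bm W_{\bm u}^{\sss \Xc_{k:l}} - \bm u\|_\infty \le \eta$ for some $\eta \le \eps/2$. On that event, $w_j \in [\eps/2, 1-\eps/2]$, so both $\bm w$ and $\bm u$ lie in the compact set $K_\eps = \{ \bm w \in [0,1]^d : w_j \in [\eps/2, 1-\eps/2]\} \subset V_j$, on which $\dot C_j$ is uniformly continuous by Condition~\ref{cond:pd}; given $\gamma>0$, choosing $\eta$ small enough makes the integrand at most $\gamma$ there. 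On the complementary event the integrand is bounded by $1$, and Chebyshev's inequality together with the weak variance bound of Condition~\ref{cond:var:W:weak} gives
\begin{equation*}
\nu_{\bm u}^{\sss \Xc_{k:l}}\bigl( \|\bm W_{\bm u}^{\sss \Xc_{k:l}} - \bm u\|_\infty > \eta \bigr)
\le \sum_{i=1}^d \frac{\Var(W_{i,u_i}^{\sss \Xc_{k:l}})}{\eta^2}
\le \frac{d\,a_m}{\eta^2}.
\end{equation*}
Since $t-s\ge\delta$ forces $m \ge n\delta - 1 \to\infty$ and $a_n \downarrow 0$, the right-hand side tends to $0$ uniformly in $(s,t,\bm u)$; hence $\limsup_n \sup|T_2| \le \gamma$, and as $\gamma$ is arbitrary, $\sup|T_2| = o(1)$ deterministically.

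The term $T_1$ is where I expect the main obstacle to lie: the smoothing measure $\nu_{\bm u}^{\sss \Xc_{k:l}}$ charges a neighbourhood of the faces $\{w_j \in \{0,1\}\}$, where Corollary~\ref{cor:wc:Delta} gives no control on $\dotu{C}_{j,k:l}^{\sss \Delta} - \dot C_j$. This is exactly why the statement concerns the \emph{truncated} estimators: since $\dotu{C}_{j,k:l}^{\sss \Delta}$ and $\dot C_j$ both take values in $[0,1]$, the integrand of $T_1$ is bounded by $1$ for every $\bm w$, so the awkward boundary region can be discarded at the cost of its (vanishing) mass. Concretely, I would fix $\eps' = \eps/2$ and split the integral at $\{w_j \in [\eps',1-\eps']\}$: the interior part is at most
\begin{equation*}
\sup_{\substack{(s,t)\in\Lambda\\ t-s\ge\delta}}\ \sup_{\substack{\bm w\in[0,1]^d\\ w_j\in[\eps',1-\eps']}} \bigl| \dotu{C}_{j,\ip{ns}+1:\ip{nt}}^{\sss \Delta}(\bm w) - \dot C_j(\bm w) \bigr|,
\end{equation*}
which is $o_\Pr(1)$ by Corollary~\ref{cor:wc:Delta} applied with $\eps'$, uniformly in $\bm u$; the boundary part is at most $\nu_{\bm u}^{\sss \Xc_{k:l}}(W_{j,u_j}^{\sss \Xc_{k:l}} \notin [\eps',1-\eps'])$, and since $u_j \in [\eps,1-\eps]$ with $\eps' < \eps$, Chebyshev's inequality and Condition~\ref{cond:var:W:weak} yield $\nu_{\bm u}^{\sss \Xc_{k:l}}(|W_{j,u_j}^{\sss \Xc_{k:l}} - u_j| > \eps - \eps') \le a_m/(\eps-\eps')^2 \to 0$ uniformly. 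Combining the interior and boundary bounds gives $\sup|T_1| = o_\Pr(1)$, which together with the bound on $T_2$ establishes~\eqref{eq:wc:Delta:nu:trunc}; the $\nabla$ case follows by the identical splitting.
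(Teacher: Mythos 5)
Your proof is correct and follows essentially the same route as the paper's: the same two-term decomposition into a stochastic part (handled by splitting at $\{w_j \in [\eps/2,1-\eps/2]\}$, using Corollary~\ref{cor:wc:Delta} on the interior and Chebyshev's inequality with Condition~\ref{cond:var:W:weak} near the boundary, exploiting that the truncated estimator and $\dot C_j$ lie in $[0,1]$) and a bias part (handled by uniform continuity of $\dot C_j$ on $\{w_j \in [\eps/2,1-\eps/2]\}$ plus Chebyshev). The only cosmetic differences are that you merge the paper's two boundary regions into one and note, correctly, that the bias and boundary bounds are uniform over all data realizations and hence deterministic.
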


One may wonder why the estimators $\dot C_{j,k:l}^{\sss \nabla,\nu}$ in~\eqref{eq:pd:est:nabla:nu} and $\dot C_{j,k:l}^{\sss \Delta,\nu}$ in~\eqref{eq:pd:est:Delta:nu} are not included in the previous proposition. Actually, upon additionally imposing that the left and right bandwidths of $\dot C_{j,k:l}^{\sss \nabla}$ in~\eqref{eq:pd:est:nabla} and $\dot C_{j,k:l}^{\sss \Delta}$ in~\eqref{eq:pd:est:Delta} (which are to be smoothed) are equal and in the absence of ties (see Condition~\ref{cond:no:ties}), weak consistency can also be proven for the estimators $\dot C_{j,k:l}^{\sss \nabla,\nu}$ and $\dot C_{j,k:l}^{\sss \Delta,\nu}$ using the same technique of proof. An inspection of the proof and some additional thinking reveals that this follows from the fact that these estimators are bounded on $[0,1]^d$ in this case. When one of the bandwidths is zero, this is not necessarily the case anymore. This is also why the previous proposition cannot be directly used to establish the weak consistency of the Bernstein estimator in~\eqref{eq:Bern:pd}. For this estimator, one additionally needs to rely on the fact that the finite difference-based estimator that is smoothed is bounded on the support of the smoothing distributions. This is used in the proof in Appendix~\ref{app:pd} of the next proposition.


\begin{prop}[Weak consistency of the Bernstein estimator in a sequential setting]
\label{prop:wc:Bern}
Assume that Conditions~\ref{cond:no:ties}, \ref{cond:pd} and~\ref{cond:Bn} hold and, for any $i \in \N$, let $m_i = \ip{L i^\theta} \vee 2$ for some constants $L > 0$ and $\theta \in (0,1/2]$. Then, for any $j \in \{1,\dots,d\}$, $\delta \in (0,1)$ and $\eps \in (0,1/2)$,
\begin{align}
  \label{eq:wc:Bern}
  \sup_{\substack{(s,t) \in \Lambda \\ t-s \geq \delta}} \sup_{\substack{\bm u \in [0,1]^d\\ u_j \in [\eps, 1-\eps]}}  \left| \dot C_{j,\ip{ns}+1:\ip{nt},m_{\ip{nt}-\ip{ns}}}^{\sss \Bern}  - \dot C_j(\bm u) \right| = o_\Pr(1),
\end{align}
where $\dot C_{j,k:l,m}^{\sss \Bern}$ is defined in~\eqref{eq:Bern:pd}. In addition, for any $n \in \N$,
\begin{equation}
  \label{eq:bounded:Bern}
  \sup_{(s,t,\bm u) \in \Lambda \times [0,1]^d}  \left| \dot C_{j,\ip{ns}+1:\ip{nt},m_{\ip{nt}-\ip{ns}}}^{\sss \Bern}(\bm u) \right| \leq 1 + L \vee 2.
\end{equation}
\end{prop}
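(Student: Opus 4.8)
The plan is to exploit the representation \eqref{eq:Bern:pd}, which exhibits $\dot C_{j,k:l,m}^{\sss \Bern}$ as a smoothing of the \emph{one-sided} finite-difference estimator $\dot C_{j,k:l,1/m,0}^{\sss \nabla}$ against the probability measure $\tilde\mu_{j,m,\bm u}$. This is formally a member of the second class of smooth estimators \eqref{eq:pd:est:nabla:nu}, so I would mirror the proof of Proposition~\ref{prop:wc:Delta:nu:trunc}. The only two features preventing a direct appeal to that result are that the smoothed estimator is not truncated and that it uses a vanishing left bandwidth $h'=0$, so that $\dot C_{j,k:l,1/m,0}^{\sss \nabla}$ is \emph{not} bounded on all of $[0,1]^d$ and Condition~\ref{cond:band} fails. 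The observation resolving both issues is that $\tilde\mu_{j,m,\bm u}$ is supported on the grid where $w_j\le (m-1)/m$ and the remaining coordinates lie in $\{0,1/m,\dots,1\}$, and on this support the estimator is bounded.

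I would establish \eqref{eq:bounded:Bern} first. On the support of $\tilde\mu_{j,m,\bm u}$ we have $w_j+1/m\le 1$, so no boundary truncation occurs and $\dot C_{j,k:l,1/m,0}^{\sss \nabla}(\bm w)$ equals $m/(l-k+1)$ times the number of observations $i$ with $\hat{\bm U}_i^{k:l}\le\bm w+\tfrac1m\bm e_j$ but $\hat{\bm U}_i^{k:l}\not\le\bm w$, which is nonnegative. Under Condition~\ref{cond:no:ties} the $j$th scaled ranks are the distinct points $\{1/(l-k+1),\dots,1\}$, so the count of observations whose $j$th scaled rank falls in an interval of length $1/m$ is at most $(l-k+1)/m+1$; hence $\dot C_{j,k:l,1/m,0}^{\sss \nabla}(\bm w)\le 1+m/(l-k+1)$. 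Since $m=m_{l-k+1}=\ip{L(l-k+1)^\theta}\vee 2$ with $\theta\le 1/2$, one checks $m/(l-k+1)\le L\vee 2$ in both branches of the maximum, giving the uniform bound $0\le \dot C_{j,k:l,1/m,0}^{\sss \nabla}\le 1+L\vee 2$ on the support. Integrating against the probability measure $\tilde\mu_{j,m,\bm u}$ and using the convention $\dot C_{j,k:l}=0$ when $k>l$ yields \eqref{eq:bounded:Bern} uniformly over $\Lambda\times[0,1]^d$.

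For \eqref{eq:wc:Bern}, write $N=\ip{nt}-\ip{ns}$ (so $t-s\ge\delta$ forces $N\ge\ip{n\delta}\to\infty$, hence $m=m_N\to\infty$ and $1/m\to0$ uniformly), and decompose $\dot C_{j,k:l,m}^{\sss \Bern}(\bm u)-\dot C_j(\bm u)=\int_{[0,1]^d}\{\dot C_{j,k:l,1/m,0}^{\sss \nabla}(\bm w)-\dot C_j(\bm u)\}\,\dd\tilde\mu_{j,m,\bm u}(\bm w)$. I would split the integral at $\{\|\bm w-\bm u\|\le\eta\}$. On the complement, bound the integrand by $(1+L\vee2)+1$ using \eqref{eq:bounded:Bern} and $0\le\dot C_j\le1$, and bound $\tilde\mu_{j,m,\bm u}(\|\bm W-\bm u\|>\eta)$ by Chebyshev's inequality, using that each coordinate of the smoothing vector has variance $O(1/m)$; this is $O\{1/(m\eta^2)\}\to0$ uniformly. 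On the neighbourhood, restricting to $u_j\in[\eps,1-\eps]$ and taking $\eta<\eps/2$ keeps $w_j\in[\eps/2,1-\eps/2]$, and I would bound the integrand by $\sup_{\|\bm w-\bm u\|\le\eta}|\dot C_{j,k:l,1/m,0}^{\sss \nabla}(\bm w)-\dot C_j(\bm w)|+\sup_{\|\bm w-\bm u\|\le\eta}|\dot C_j(\bm w)-\dot C_j(\bm u)|$, the second term being controlled by the uniform continuity of $\dot C_j$ on the compact set $\{u_j\in[\eps/2,1-\eps/2]\}$ and made small by choosing $\eta$ small.

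The main obstacle is the uniform, sequential control of the first term, that is, the weak consistency of the one-sided difference $\dot C_{j,k:l,1/m,0}^{\sss \nabla}$, which cannot be taken from Corollary~\ref{cor:wc:Delta} because $h'=0$ violates Condition~\ref{cond:band}. I would argue directly: decompose $\dot C_{j,k:l,1/m,0}^{\sss \nabla}(\bm w)-\dot C_j(\bm w)$ into the deterministic bias $\{C(\bm w+\tfrac1m\bm e_j)-C(\bm w)\}/(1/m)-\dot C_j(\bm w)$, which tends to $0$ uniformly on $\{w_j\in[\eps/2,1-\eps/2]\}$ by the mean value theorem and the continuity of $\dot C_j$ (Condition~\ref{cond:pd}), plus a stochastic term. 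Writing the latter through $\Cb_n$ gives $\tfrac{m\sqrt n}{N}\{\Cb_n(s,t,\bm w+\tfrac1m\bm e_j)-\Cb_n(s,t,\bm w)\}$, where over $t-s\ge\delta$ the prefactor is $O(n^{\theta-1/2})$ and thus bounded because $\theta\le1/2$. Since all the $\bm u$-increments above are over steps $1/m\le 1/m_{\ip{n\delta}}\to0$, the asymptotic equicontinuity of $\Cb_n$—which holds under Condition~\ref{cond:Bn} because $\Cb_n\leadsto\Cb_C$ with almost surely uniformly continuous limit paths (Theorem~\ref{thm:wc:Cb:n})—makes these increments $o_\Pr(1)$ uniformly, so the stochastic term is $o_\Pr(1)$. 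A standard argument, first fixing $\eta$ small and then letting $n\to\infty$, combines the two regions to give \eqref{eq:wc:Bern}. The genuinely delicate point is the boundary case $\theta=1/2$, where the prefactor does not vanish and one must rely on the vanishing of the process increments rather than of the prefactor.
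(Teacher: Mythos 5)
Your proposal is correct and is essentially the paper's own proof: the same starting representation \eqref{eq:Bern:pd} restricted to the support of $\tilde \mu_{j,m,\bm u}$ (where $w_j \leq 1-1/m$), the same uniform bound $1 + L \vee 2$ on the one-sided finite difference there (you obtain it by counting scaled ranks in an interval of length $1/m$, the paper via the Lipschitz property of d.f.s with respect to their margins — these are equivalent), and the same near/far split of the smoothing integral with Chebyshev controlling the far region and uniform continuity of $\dot C_j$ the near one. The only divergence is that where the paper simply invokes Corollary~\ref{cor:wc:Delta} (asserting Condition~\ref{cond:band} holds with $h = 1/m$, $h' = 0$), you judge that citation unavailable and re-derive the uniform sequential consistency of $\dot C_{j,k:l,1/m,0}^{\sss \nabla}$ inline — bias by the mean value theorem, stochastic part by the bounded prefactor $m\sqrt{n}/N = O(n^{\theta-1/2})$ together with asymptotic equicontinuity of $\Cb_n$ — which is exactly the argument underlying Proposition~\ref{prop:wc:nu:Delta}, so the two treatments coincide in substance, including at the delicate boundary case $\theta = 1/2$.
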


Note that our technique of proof allows us to establish uniform weak consistency of the estimator even for $\theta = 1/2$, whereas the approach used in the proof of Proposition~3.1 of \cite{BouGhoTaa13} (for Bernstein copula density estimators and which could be adapted to first-order partial derivative estimators) leads to the result only for $\theta < 1/2$. Another consequence of the previous result is that the Bernstein partial derivative estimator as parametrized in Proposition~\ref{prop:wc:Bern} can satisfy Condition~\ref{cond:pd:est}.

\subsection{Finite-sample performance of selected estimators}
\label{sec:MC:pd}

The aim of this subsection is to compare the finite-sample performance of some of the estimators introduced previously. Specifically, for each $n \in \{10,20,\dots,100\}$, each data generating copula $C$ and each partial derivative estimator $\dot C_{j,1:n}$ under investigation, we estimated its integrated mean squared error 
\begin{equation*}
  \mathrm{IMSE}(\dot C_{j,1:n}) = \int_{[0,1]^d} \Ex \left[ \left\{ \dot C_{j,1:n}(\bm u) - \dot C_j(\bm u) \right\}^2 \right] \dd \bm u.
\end{equation*}
To do so, we applied the trick described in detail in Appendix~B of \cite{SegSibTsu17} allowing to compute $\mathrm{IMSE}(\dot C_{j,1:n})$ as a single expectation and proceeded by Monte Carlo simulation using 20 000 independent random samples of size $n$ from $C$. 

In a first experiment, we compared estimators of the form $\dotu{C}_{j,1:n,h,h'}^{\sss \nu,\nabla}$ in \eqref{eq:pd:est:nu:nabla:trunc} to estimators of the form $\dotu{C}_{j,1:n,h,h'}^{\sss \nu,\Delta}$ in \eqref{eq:pd:est:nu:Delta:trunc} for deterministic bandwidths $h = h' = n^{-1/2} \wedge 1/2$. Specifically, we considered estimators based, respectively, on finite-differences of the classical empirical copula $C_{1:n}$ in~\eqref{eq:C:kl}, on finite-differences of the empirical beta copula $C_{1:n}^{\sss \Bin}$ defined in~\eqref{eq:C:1n:beta}, and on finite-differences of its data-adaptive extension $C_{1:n}^{\sss \BetaB}$ defined at the end of Section~\ref{sec:ec}. As data-generating copula $C$, we considered the bivariate or trivariate Clayton or Gumbel--Hougaard copula with bivariate margins with a Kendall's tau of $\tau \in \{0,0.25,0.5,0.75\}$ as well as the bivariate Frank copula with a Kendall's tau of $\tau \in \{0,-0.25,-0.5,-0.75\}$. Note that, since all data-generating copulas are exchangeable, it suffices to focus on only one partial derivative estimator, say the first one. As expected, the integrated mean squared error of estimators of the form $\dotu{C}_{j,1:n,h,h'}^{\sss \nu,\Delta}$ was always found to be (substantially) below that of the corresponding estimator $\dotu{C}_{j,1:n,h,h'}^{\sss \nu,\nabla}$, confirming that the adjusted numerator in~\eqref{eq:pd:est:nu:Delta} compared to the one in~\eqref{eq:pd:est:nu:nabla} helps indeed to improve the finite-sample performance of finite-difference-based estimators.

In a second experiment, we compared the aforementioned three estimators of the form $\dotu{C}_{j,1:n,h,h'}^{\sss \nu,\Delta}$ in \eqref{eq:pd:est:nu:Delta:trunc}. They will be denoted as $\dotu{C}_{j,1:n,h,h'}^{\sss \Delta} = \dotu{C}_{j,1:n,h,h'}^{\sss \Dirac,\Delta}$,  $\dotu{C}_{j,1:n,h,h'}^{\sss \Bin,\Delta}$ and $\dotu{C}_{j,1:n,h,h'}^{\sss \BetaB,\Delta}$ as we continue. As could have been expected from the experiments of \cite{KojYi22} comparing the underlying copula estimators, it is the estimator $\dotu{C}_{j,1:n,h,h'}^{\sss \BetaB,\Delta}$ that always displayed the lowest integrated mean squared error, followed by $\dotu{C}_{j,1:n,h,h'}^{\sss \Bin,\Delta}$ and $\dotu{C}_{j,1:n,h,h'}^{\sss \Delta}$.

We next investigated the influence of the bandwidths on the integrated mean squared error of $\dotu{C}_{j,1:n,h,h'}^{\sss \BetaB,\Delta}$. Deterministic bandwidths of the form $h=h'= (L n^{-1/2}) \wedge 1/2$ were considered with $L \in \{0.5,1,2,4\}$. The corresponding integrated mean squared errors are represented in the first column of graphs of Figure~\ref{fig:pd:Frank} (resp.~Figure~\ref{fig:pd:Gumbel}) when the data-generating copula is the bivariate Frank copula with negative dependence (resp.\ the trivariate Gumbel--Hougaard copula). The legend ``BetaB 0.5'' refers to the estimator with $L=0.5$ and so on. As one can see, the weaker the cross-sectional dependence, the larger the (constant $L$ in the expression of the) bandwidths should be.

\begin{figure}[t!]
\begin{center}
  \includegraphics*[width=1\linewidth]{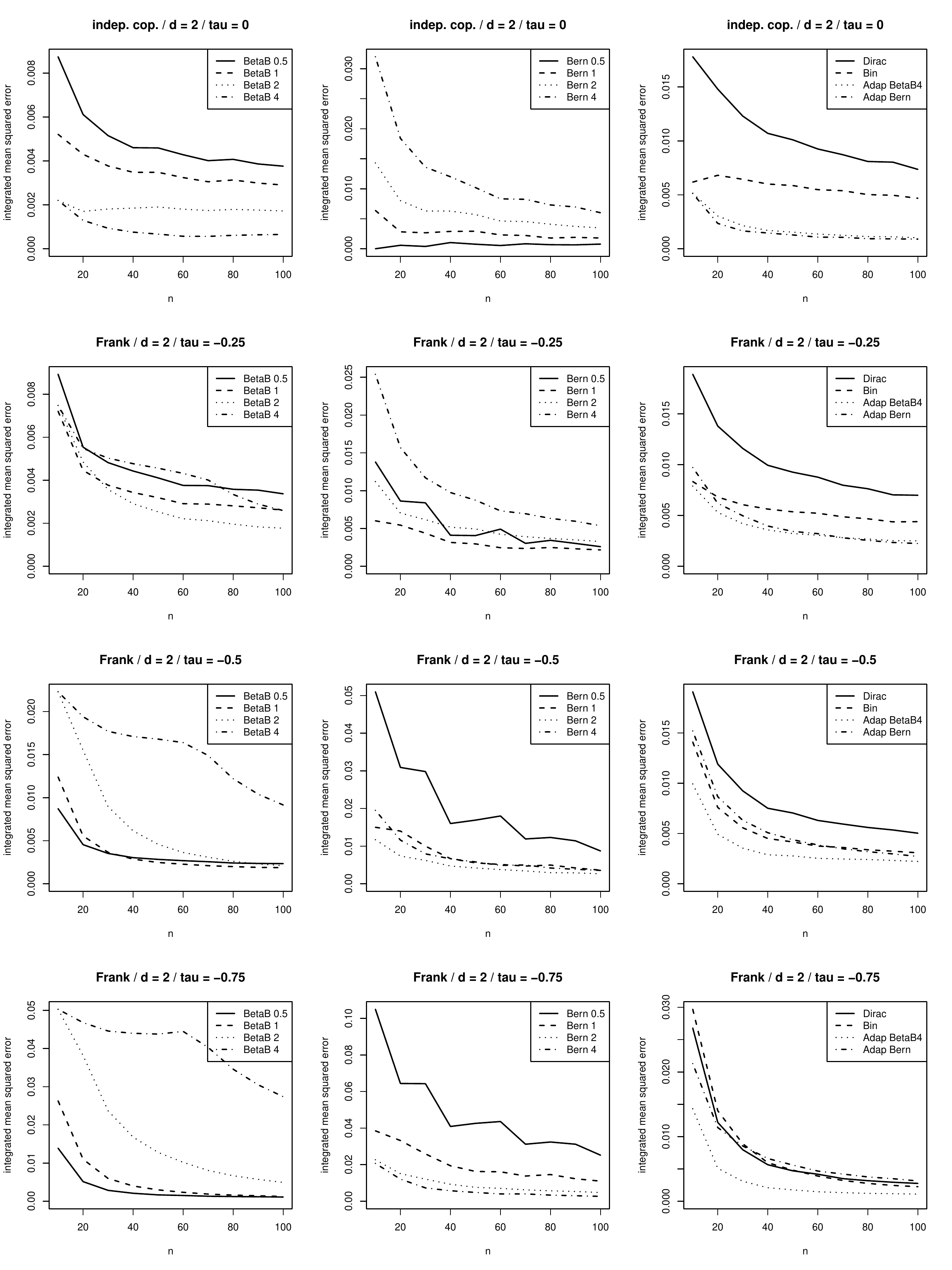}
  \caption{\label{fig:pd:Frank} Estimated integrated mean squared errors against $n \in \{10,20, \dots, 100\}$ of four estimators of $\dot C_1$ when $C$ is the bivariate Frank copula with a Kendall's tau in $\{0,-0.25,-0.5,-0.75\}$.}
\end{center}
\end{figure}

\begin{figure}[t!]
\begin{center}
  \includegraphics*[width=1\linewidth]{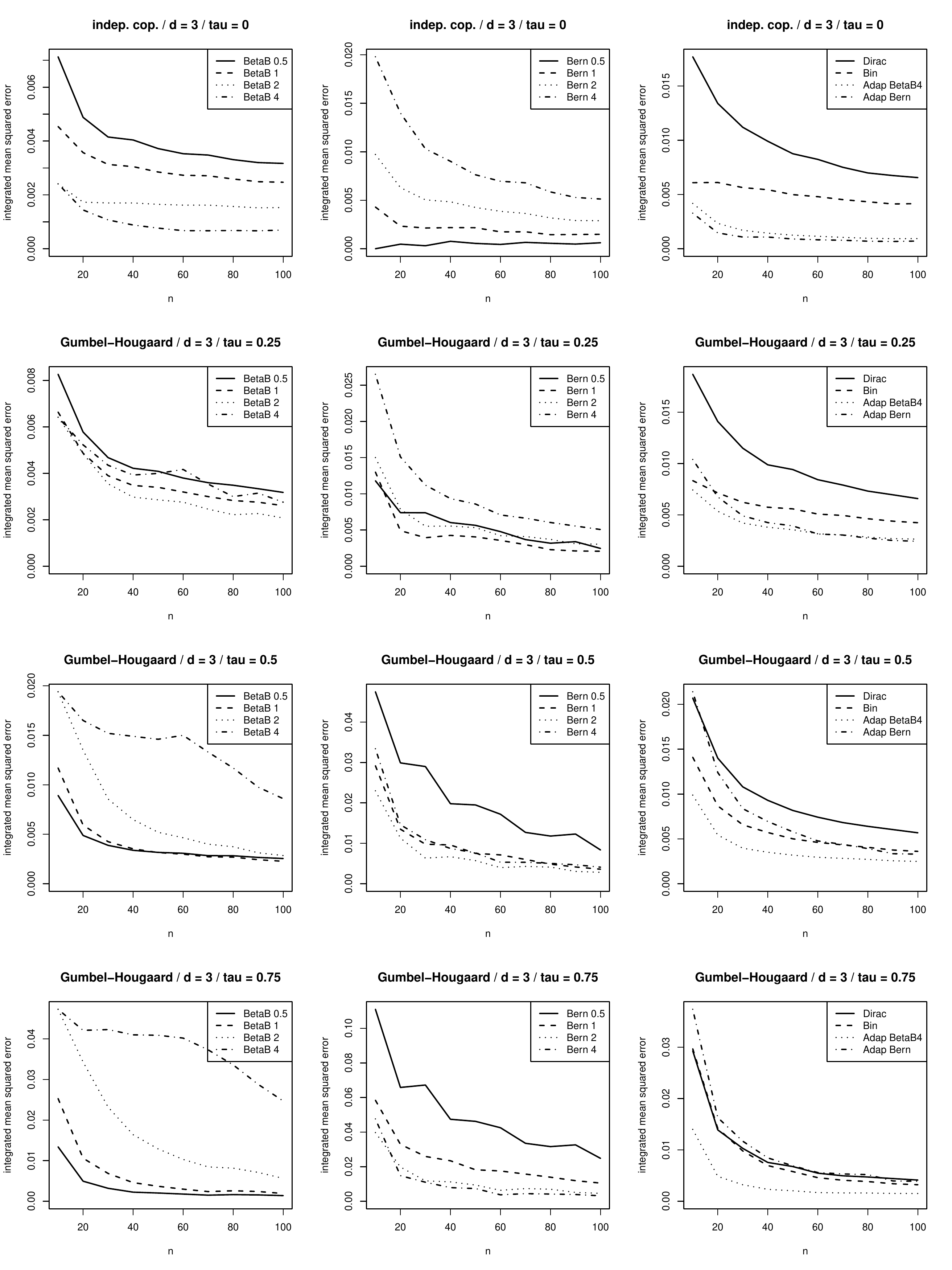}
  \caption{\label{fig:pd:Gumbel} Estimated integrated mean squared errors against $n \in \{10,20, \dots, 100\}$ of four estimators of $\dot C_1$ when $C$ is the trivariate Gumbel--Hougaard copula whose bivariate margins have a Kendall's tau in $\{0,0.25,0.5,0.75\}$.}
\end{center}
\end{figure}

An inspection of~\eqref{eq:pd:est:nabla:nu} and~\eqref{eq:pd:est:Delta:nu} and some thinking reveals that estimators from the second class can be difficult to compute in practice. For that reason, in our experiments, we solely focused on the Bernstein estimator $\dot C_{j,1:n,m}^{\sss \Bern}$ in~\eqref{eq:Bern:pd} which can be computed using~\eqref{eq:Bern:pd:comp}. Mimicking the previous experiment, we considered a deterministic choice for the parameter $m$ of the form $m = \ip{L n^{1/2}} \vee 2$ with $L \in \{0.5,1,2,4\}$. The corresponding integrated mean squared errors are represented in the second column of graphs of Figure~\ref{fig:pd:Frank} (resp.~Figure~\ref{fig:pd:Gumbel}) when the data-generating copula is the bivariate Frank copula with negative dependence (resp.\ the trivariate Gumbel--Hougaard copula). The legend ``Bern 0.5'' refers to the estimator with $L=0.5$ and so on. As one can see, this time, the stronger the cross-sectional dependence, the larger the (constant $L$ in the expression of the) parameter $m$ should be. This is of course not surprising given the previous experiment and since $1/m$ plays the role of a bandwidth.

The two previous experiments suggested to focus on data-adaptive bandwidths for the estimators $\dotu{C}_{j,1:n,h,h'}^{\sss \BetaB,\Delta}$ and $\dot C_{j,1:n,m}^{\sss \Bern}$. Specifically, for $d \in \{2,3\}$ and a data set $\bm x \in (\R^d)^n$, we considered the settings $h(\bm x) = h'(\bm x) = ([4 \{1 - |\tau(\bm x)|\}^6 + 1/2] n^{-1/2} ) \wedge 1/2$ for $\dotu{C}_{j,1:n,h,h'}^{\sss \BetaB,\Delta}$ and $m(\bm x) = \ip{\{ 4 |\tau(\bm x)|^{3/2} + 1/2 \} n^{1/2}} \vee 2$ for $\dot C_{j,1:n,m}^{\sss \Bern}$, where $\tau(\bm x) \in [-1,1]$ is the average of the values of the sample version of Kendall's tau computed from the bivariate margins of the data set~$\bm x$. The integrated mean squared errors of the resulting data-adaptive estimators are represented in the third column of graphs of Figure~\ref{fig:pd:Frank} (resp.~Figure~\ref{fig:pd:Gumbel}) when the data-generating copula is the bivariate Frank copula with negative dependence (resp.\ the trivariate Gumbel--Hougaard copula). The legends ``Adap BetaB4'' and ``Adap Bern'' refer to the above-mentioned versions of the estimators $\dotu{C}_{j,1:n,h,h'}^{\sss \BetaB,\Delta}$ and $\dot C_{j,1:n,m}^{\sss \Bern}$, respectively, while ``Dirac'' and ``Bin'' refer to the benchmark estimators $\dotu{C}_{j,1:n,h,h'}^{\sss \Delta} = \dotu{C}_{j,1:n,h,h'}^{\sss \Dirac,\Delta}$ and  $\dotu{C}_{j,1:n,h,h'}^{\sss \Bin,\Delta}$ with deterministic bandwidths $h = h' = n^{-1/2} \wedge 1/2$ based on the empirical copula and the empirical beta copula, respectively. Overall, it is the data-adaptive estimator $\dotu{C}_{j,1:n,h,h'}^{\sss \BetaB,\Delta}$ that displays the best finite-sample behavior. The data-adaptive Bernstein estimator $\dot C_{j,1:n,m}^{\sss \Bern}$ appears to be competitive only when the data-generating copula is close to the independence copula.

\section{Conclusion}

Smooth nonparametric copula estimators, such as the empirical beta copula proposed by \cite{SegSibTsu17} or its data-adaptive extension studied in \cite{KojYi22}, can be substantially better estimators than the classical empirical copula in small samples. To use such estimators in inference procedures, one typically needs to rely on resampling techniques.

As investigated in Section~\ref{sec:boot}, in the case of i.i.d.\ observations, a smooth bootstrap \emph{à la} \cite{KirSegTsu21} can be asymptotically valid for a large class of smooth estimators that can be expressed as mixtures of d.f.s. When based on the empirical beta copula, \cite{KirSegTsu21} found such a smooth bootstrap to be a competitive alternative to the multiplier bootstrap while being substantially simpler to implement. An empirical finding of this work is that the smooth bootstrap based on the data-adaptive extension of the empirical beta copula proposed in \cite{KojYi22} seems to lead to even better-behaved inference procedures than the former as it copes better with stronger dependence.

Unfortunately, such smooth bootstraps cannot be used anymore in the time series setting. A second contribution of this work was to study both theoretically and empirically smooth extensions of the sequential dependent multiplier bootstrap of \cite{BucKoj16}. As illustrated at the end of the fourth section, the latter can for instance be used to derive smooth change-point detection tests which are likely to be more sensitive to early or late changes than their non-smooth counterparts since, as already mentioned, smooth estimators are likely to be more accurate than the empirical copula when computed from small subsets of observations.

In connection with the multiplier bootstrap, a third contribution of this work was the study of the weak consistency and finite-sample performance of two classes of smooth estimators of the first-order partial derivatives of the copula. The obtained results may be of independent interest since such estimators have applications in mean and quantile regression as they lead to estimators of the conditional distribution function. From an empirical perspective, our investigations led to the proposal of a smooth data-adaptive estimator of the first-order partial derivatives of the copula that substantially outperforms, among others, the Bernstein estimator studied in \cite{JanSwaVer16}.


\begin{appendix}

\section{Proof of Corollary~\ref{cor:wc:Cb:n:nu}}
\label{proof:cor:wc:Cb:n:nu}

The proof of Corollary~\ref{cor:wc:Cb:n:nu} is based on the following two lemmas.

\begin{lem}
  \label{lem:stochastic}
  Let $\Xb_n$ be a process in $\ell^\infty(\Lambda \times [0,1]^d)$ such that for all $\bm u \in [0,1]^d$ and $s \in [0,1]$, $\Xb_n(s,s,\bm u) = 0$. Furthermore, assume that $\Xb_n \leadsto \Xb$ in $ \ell^\infty(\Lambda \times [0,1]^d)$ where $\Xb$ has continuous trajectories almost surely. Then, under Condition~\ref{cond:var:W:weak} (which is implied by Condition~\ref{cond:var:W}),
  \begin{align}
     \label{eq:seq}
    \sup_{(s,t,\bm u) \in \Lambda \times[0, 1]^d} \left| \int_{[0,1]^d}\Xb_n(s,t,\bm w)\dd \nu_{\bm u}^{\sss \Xc_{\ip{ns}+1:\ip{nt}}}(\bm w)-\Xb_n(s,t,\bm u) \right| &= o_\Pr(1),\\
     \label{eq:nonseq}
    \sup_{(s,t,\bm u) \in \Lambda \times[0, 1]^d} \left| \int_{[0,1]^d}\Xb_n(s,t,\bm w)\dd \nu_{\bm u}^{\sss \Xc_{1:n}}(\bm w)-\Xb_n(s,t,\bm u) \right| &= o_\Pr(1).
  \end{align}
\end{lem}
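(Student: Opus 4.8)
The plan is to rewrite, for each fixed $(s,t,\bm u)$, the smoothed quantity as an integral against the law of the smoothing vector and to control the resulting difference pathwise. Writing $\bm x = \Xc_{\ip{ns}+1:\ip{nt}}$ (resp.\ $\bm x = \Xc_{1:n}$) and recalling that $\nu_{\bm u}^{\bm x}$ is the law of the mean-$\bm u$ random vector $\bm W_{\bm u}^{\bm x}$ taking values in $[0,1]^d$, the absolute value of the integrand in \eqref{eq:seq} (resp.\ \eqref{eq:nonseq}) is at most $\int_{[0,1]^d}|\Xb_n(s,t,\bm w)-\Xb_n(s,t,\bm u)|\dd\nu_{\bm u}^{\bm x}(\bm w)$. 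Since everything here holds for a fixed realization of $\Xb_n$, no independence between $\Xb_n$ and $\bm W_{\bm u}^{\bm x}$ is required. The two driving facts are: (i) because $\Xb_n\leadsto\Xb$ with $\Xb$ having almost surely continuous trajectories on the compact set $\Lambda\times[0,1]^d$, the sequence $\Xb_n$ is asymptotically uniformly equicontinuous in probability \citep[see, e.g.,][Section~1.5]{vanWel96}; and (ii) $\|\Xb_n\|_\infty=O_\Pr(1)$ by the continuous mapping theorem applied to the (continuous) sup-norm functional.

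For a bandwidth $\delta>0$, let $w(\Xb_n,\delta)=\sup\{|\Xb_n(a)-\Xb_n(b)| : a,b\in\Lambda\times[0,1]^d,\ \|a-b\|\le\delta\}$ be the modulus of continuity of $\Xb_n$. Splitting the smoothing mass according to whether $\|\bm w-\bm u\|\le\delta$ or not yields the pathwise bound $w(\Xb_n,\delta)+2\|\Xb_n\|_\infty\,\nu_{\bm u}^{\bm x}(\{\bm w:\|\bm w-\bm u\|>\delta\})$. As $\bm W_{\bm u}^{\bm x}$ has mean $\bm u$, Chebyshev's inequality together with Condition~\ref{cond:var:W:weak} gives, when $\bm x$ has length $m$, the tail bound $\nu_{\bm u}^{\bm x}(\{\|\bm w-\bm u\|>\delta\})\le d^2 a_m/\delta^2$. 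For \eqref{eq:nonseq} the smoothing data is always $\Xc_{1:n}$, so $m=n$ and the tail term is $2\|\Xb_n\|_\infty d^2 a_n/\delta^2=o_\Pr(1)$ for each fixed $\delta$; the claim then follows by first choosing $\delta$ small via equicontinuity to make $\limsup_n\Pr^*(w(\Xb_n,\delta)>\eta)$ arbitrarily small, and then letting $n\to\infty$.

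The sequential statement \eqref{eq:seq} is the main obstacle, precisely because the smoothing data $\Xc_{\ip{ns}+1:\ip{nt}}$ has length $m=\ip{nt}-\ip{ns}$ that may be small, in which case $a_m$ need not be small and the Chebyshev bound is useless. I would resolve this by splitting $\Lambda$ at its diagonal. On the near-diagonal region $\{t-s\le\delta\}$ I would not rely on smoothing at all but instead exploit the hypothesis $\Xb_n(s,s,\cdot)=0$: since $|\Xb_n(s,t,\bm w)|=|\Xb_n(s,t,\bm w)-\Xb_n(s,s,\bm w)|\le w(\Xb_n,\delta)$ whenever $t-s\le\delta$, both $\int\Xb_n(s,t,\bm w)\dd\nu_{\bm u}^{\bm x}(\bm w)$ and $\Xb_n(s,t,\bm u)$ are bounded by $w(\Xb_n,\delta)$, so their difference is at most $2w(\Xb_n,\delta)$. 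On the far-diagonal region $\{t-s>\delta\}$ one has $m\ge n(t-s)-1>n\delta-1$, so $m\to\infty$ and, $a_n$ being nonincreasing, $a_m\le a_{\ip{n\delta/2}}\to0$ for every fixed $\delta$; hence the Chebyshev tail term is again $o_\Pr(1)$ uniformly over that region. Combining the two regions gives, for $n$ large, the uniform bound $2w(\Xb_n,\delta)+2\|\Xb_n\|_\infty d^2 a_{\ip{n\delta/2}}/\delta^2$.

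The proof then concludes as in the non-sequential case: given $\eta,\eps>0$, asymptotic equicontinuity fixes $\delta$ with $\limsup_n\Pr^*(2w(\Xb_n,\delta)>\eta/2)<\eps$, and then, for this fixed $\delta$, the remaining term tends to $0$ in probability as $n\to\infty$ because $a_{\ip{n\delta/2}}\to0$ and $\|\Xb_n\|_\infty=O_\Pr(1)$; since $\eta,\eps$ are arbitrary, both suprema are $o_\Pr(1)$. The only genuinely delicate point is the interplay on the near-diagonal between the vanishing of $\Xb_n$ at $s=t$ and the possibly non-negligible spread of the smoothing distributions there; away from the diagonal, and throughout \eqref{eq:nonseq}, it is a routine equicontinuity-plus-Chebyshev argument.
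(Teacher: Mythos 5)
Your proof is correct and follows essentially the same route as the paper's, which simply defers to the proof of Lemma~32 in \cite{KojYi22}: asymptotic uniform equicontinuity of $\Xb_n$ (from weak convergence to a limit with continuous trajectories), a Chebyshev bound on the smoothing tails via Condition~\ref{cond:var:W:weak}, and a split of $\Lambda$ at the diagonal. In particular, your treatment of the near-diagonal region $\{t-s\le\delta\}$ — where the substretch length may be too small for the Chebyshev/variance bound to be useful, so one instead invokes $\Xb_n(s,s,\cdot)=0$ to bound both terms by the modulus of continuity — is exactly the role that hypothesis plays in the cited argument.
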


\begin{proof}
The first claim was proven in the proof of Lemma~32 of \cite{KojYi22}. The proof of~\eqref{eq:nonseq} is very similar. 
\end{proof}

\begin{lem}
\label{lem:bias}
Assume that Conditions~\ref{cond:pd} and~\ref{cond:var:W} hold. Then, almost surely,
\begin{align}
  \label{eq:bias:claim1}
  \sup_{(s,t,\bm u) \in \Lambda \times[0, 1]^d} &\sqrt{n}\lambda_n(s,t) \left| \int_{[0,1]^d} C(\bm w) \dd \nu_{\bm u}^{\sss \Xc_{\ip{ns}+1:\ip{nt}}}(\bm w)-C(\bm u) \right| = o(1),\\
  \label{eq:bias:claim2}
  \sup_{(s,t,\bm u) \in \Lambda \times[0, 1]^d} &\sqrt{n}\lambda_n(s,t) \left| \int_{[0,1]^d} C(\bm w) \dd\nu_{\bm u}^{\sss \Xc_{1:n}}(\bm w)-C(\bm u) \right| = o(1).
\end{align}
\end{lem}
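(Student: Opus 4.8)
The plan is to write $\int_{[0,1]^d} C(\bm w)\,\dd\nu_{\bm u}^{\bm x}(\bm w) = \Ex\{C(\bm W_{\bm u}^{\bm x})\}$, where the expectation is over the smoothing law only (the data $\bm x$ being held fixed), and to exploit the defining property $\Ex(\bm W_{\bm u}^{\bm x}) = \bm u$. Writing $R(\bm u, \bm w) = C(\bm w) - C(\bm u) - \sum_{j=1}^d \dot C_j(\bm u)(w_j - u_j)$ for the first-order Taylor remainder, the mean-$\bm u$ property makes the linear term vanish in expectation, so that
$$\int_{[0,1]^d} C(\bm w)\,\dd\nu_{\bm u}^{\bm x}(\bm w) - C(\bm u) = \Ex\{R(\bm u, \bm W_{\bm u}^{\bm x})\}.$$
This cancellation is the crux: the naive bound $|C(\bm w) - C(\bm u)| \le \sum_j |w_j - u_j|$ coming from the $1$-Lipschitz property of $C$, combined with $\Ex|W_{j,u_j}^{\bm x} - u_j| \le \{\Var(W_{j,u_j}^{\bm x})\}^{1/2} = O(m^{-1/2})$ (by Condition~\ref{cond:var:W}, with $m$ the number of observations defining $\bm x$), only yields an $O(\lambda_n(s,t)^{1/2}) = O(1)$ bound after multiplication by $\sqrt n\lambda_n(s,t) = m/\sqrt n$; the required $o(1)$ can only come from the remainder being of genuine second order near $\bm u$.

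Since $C$ need not be twice differentiable, I would control $R$ through the continuity of the first-order derivatives rather than a second derivative. Telescoping $C(\bm w) - C(\bm u)$ one coordinate at a time and applying the one-dimensional fundamental theorem of calculus to each increment (each coordinate map $v \mapsto C(\cdots, v, \cdots)$ is Lipschitz, hence absolutely continuous, with a.e.\ derivative $\dot C_j$ on $V_j$) gives the exact representation
$$R(\bm u, \bm w) = \sum_{j=1}^d \int_{u_j}^{w_j} \big\{ \dot C_j(w_1,\dots,w_{j-1}, v, u_{j+1},\dots,u_d) - \dot C_j(\bm u) \big\}\,\dd v,$$
whence $|R(\bm u, \bm w)| \le \sum_{j=1}^d |w_j - u_j|\,\tilde D_j(\bm w)$, with $\tilde D_j(\bm w) = \sup_v |\dot C_j(w_1,\dots,w_{j-1}, v, u_{j+1},\dots,u_d) - \dot C_j(\bm u)| \in [0,1]$, the supremum over $v$ between $u_j$ and $w_j$. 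Applying Cauchy--Schwarz to $\Ex\{|W_{j,u_j}^{\bm x} - u_j|\,\tilde D_j\}$ and Condition~\ref{cond:var:W} then yields
$$\sqrt n\,\lambda_n(s,t)\,\big| \Ex\{R(\bm u, \bm W_{\bm u}^{\bm x})\} \big| \le \sqrt\kappa \sum_{j=1}^d \sqrt{u_j(1-u_j)}\,\lambda_n(s,t)^{1/2}\, \big\{ \Ex(\tilde D_j^2) \big\}^{1/2}.$$

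The uniform $o(1)$ conclusion then follows from a boundary/interior split of each summand. Fix $\eps' \in (0,1/2)$. When $u_j \in [\eps', 1-\eps']$, the point $(w_1,\dots,w_{j-1}, v, u_{j+1},\dots,u_d)$ stays in the compact set $\{v_j \in [\eps'/2, 1-\eps'/2]\} \subset V_j$ whenever $\|\bm w - \bm u\|_\infty \le \delta \le \eps'/2$, so by uniform continuity of $\dot C_j$ there (a property of $C$ alone, with modulus $\omega_{\eps'}$) and Chebyshev's inequality, $\Ex(\tilde D_j^2) \le \omega_{\eps'}(\delta)^2 + \Pr(\|\bm W_{\bm u}^{\bm x} - \bm u\|_\infty > \delta) \le \omega_{\eps'}(\delta)^2 + d\kappa/(4m\delta^2)$, and the corresponding contribution is at most $\tfrac12\sqrt\kappa\,\lambda_n(s,t)^{1/2}\{\omega_{\eps'}(\delta)^2 + d\kappa/(4m\delta^2)\}^{1/2}$. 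When $u_j \notin [\eps', 1-\eps']$ the factor $\sqrt{u_j(1-u_j)} \le \sqrt{\eps'}$ absorbs the crude bound $\tilde D_j \le 1$, giving a contribution at most $\sqrt{\kappa\eps'}$. Crucially, $\lambda_n(s,t)^{1/2}\{d\kappa/(4m\delta^2)\}^{1/2} = \{d\kappa/(4n\delta^2)\}^{1/2} \to 0$, so taking $\limsup_n$, then $\delta \downarrow 0$, then $\eps' \downarrow 0$ drives the whole bound to zero uniformly over $\Lambda \times [0,1]^d$. I expect the boundary to be the main obstacle: where $u_j \in \{0,1\}$ the derivative $\dot C_j$ is only defined by convention and is generally discontinuous, so $\tilde D_j$ need not be small there; this is precisely compensated by the variance factor $u_j(1-u_j) \to 0$ supplied by Condition~\ref{cond:var:W}, which is why the $u_j(1-u_j)$ scaling (rather than a uniform $\kappa/m$) is essential. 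Note that every bound above is deterministic and uniform over all admissible data $\bm x$, so the almost sure statement is immediate; claim~\eqref{eq:bias:claim2} is handled identically (indeed more simply, since $\nu_{\bm u}^{\sss \Xc_{1:n}}$ always has variance $O(1/n)$ and $\lambda_n(s,t) \le 1$ may be used directly in place of the $m$-dependent interplay).
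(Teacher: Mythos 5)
Your proof is correct, but it takes a genuinely different route from what the paper actually writes, because the paper's proof of Lemma~\ref{lem:bias} is not self-contained: claim~\eqref{eq:bias:claim1} is disposed of by citing the proof of Lemma~33 of \cite{KojYi22}, and claim~\eqref{eq:bias:claim2} is then obtained for free by domination --- since $\lambda_n(s,t)\le 1$ and $\nu_{\bm u}^{\sss \Xc_{1:n}}$ does not vary with $(s,t)$, the left-hand side of \eqref{eq:bias:claim2} is almost surely bounded by $\sup_{\bm u\in[0,1]^d}\sqrt n\,\bigl|\int_{[0,1]^d}C(\bm w)\,\dd\nu_{\bm u}^{\sss \Xc_{1:n}}(\bm w)-C(\bm u)\bigr|$, which is the $(s,t)=(0,1)$ slice of the supremum in \eqref{eq:bias:claim1}. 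Your argument reconstructs the substance that the paper outsources: the mean-$\bm u$ cancellation of the linear Taylor term, the exact telescoping/fundamental-theorem-of-calculus representation of the remainder (needed precisely because $C$ is only once differentiable in the sense of Condition~\ref{cond:pd}), Cauchy--Schwarz against Condition~\ref{cond:var:W} so that $\sqrt n\,\lambda_n(s,t)\,m^{-1/2}=\lambda_n(s,t)^{1/2}$, and the interior/boundary split in which uniform continuity of $\dot C_j$ on the compact set $\bigl\{\bm v\in[0,1]^d: v_j\in[\eps'/2,1-\eps'/2]\bigr\}\subset V_j$ handles the interior while the factor $\sqrt{u_j(1-u_j)}\le\sqrt{\eps'}$ absorbs the crude bound $\tilde D_j\le 1$ near $u_j\in\{0,1\}$, where $\dot C_j$ may be discontinuous. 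Your diagnosis that the naive Lipschitz bound only gives $O(\lambda_n(s,t)^{1/2})=O(1)$, and that the $u_j(1-u_j)$ scaling in Condition~\ref{cond:var:W} (rather than a uniform $\kappa/m$) is what rescues the boundary, is exactly the point of that condition; moreover all your bounds are deterministic and uniform over admissible data, so the almost-sure conclusion is automatic, as you note. What the paper's route buys is brevity and the observation that \eqref{eq:bias:claim2} needs no fresh argument; what yours buys is a self-contained, checkable proof (in the spirit of the bias analysis of \cite{SegSibTsu17}, which the cited lemma generalizes). Two small points to tidy: treat the degenerate pairs with $\ip{ns}=\ip{nt}$ separately (there $\lambda_n(s,t)=0$ and, by the paper's convention, $\nu_{\bm u}$ is the Dirac mass at $\bm u$, so the term vanishes and no division by $m=0$ occurs), and secure measurability of $\tilde D_j(\bm W_{\bm u}^{\bm x})$ before applying Cauchy--Schwarz, e.g.\ by taking the supremum over $v$ in the relevant interval intersected with $(0,1)$ (a countable supremum, by continuity of $\dot C_j$ on $V_j$; endpoint values are irrelevant since the integrand in your representation only matters Lebesgue-almost everywhere) or by working with outer expectations.
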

\begin{proof}
The first claim was proven in the proof of Lemma~33 of \cite{KojYi22}. The proof of \eqref{eq:bias:claim2} is an immediate consequence of the fact that the left-hand side of \eqref{eq:bias:claim2} is almost surely smaller than
$$
\sup_{\bm u \in [0, 1]^d} \sqrt{n} \left| \int_{[0,1]^d} C(\bm w) \dd\nu_{\bm u}^{\sss \Xc_{1:n}}(\bm w)-C(\bm u) \right|,
$$
which is smaller than the left-hand side of \eqref{eq:bias:claim1} with probability~1.
\end{proof}

\begin{proof}[\bf Proof of Corollary~\ref{cor:wc:Cb:n:nu}]
Combining Theorem~\ref{thm:Cb:n:nu} and Theorem~\ref{thm:wc:Cb:n}, by the triangle inequality, we immediately obtain that
$$
\sup_{(s, t,\bm u)\in \Lambda \times [0, 1]^d} \left| \Cb_n^\nu(s, t, \bm u) - \tilde \Cb_n(s, t, \bm u) \right| = o_\Pr(1),
$$
where $\tilde \Cb_n$ is defined in~\eqref{eq:Cb:n:tilde}. It thus remains to show that
\begin{align*}
  \sup_{(s, t,\bm u)\in \Lambda \times [0, 1]^d} \left| \tilde \Cb_n(s, t, \bm u) - \tilde \Cb_n^\nu(s, t, \bm u) \right| &= o_\Pr(1), \\
  \sup_{(s, t,\bm u)\in \Lambda \times [0, 1]^d} \left| \tilde \Cb_n(s, t, \bm u) - \bar \Cb_n^\nu(s, t, \bm u) \right| &= o_\Pr(1).
\end{align*}
We only prove the first claim, the proof of the second one being similar. For any $(s,t,\bm u) \in \Lambda \times [0,1]^d$, let
$$
\breve{\Bb}_n^\nu(s,t,\bm u) = \int_{[0,1]^d}\Bb_n(s,t,\bm w) \dd \nu_{\bm u}^{\sss \Xc_{\ip{ns}+1:\ip{nt}}}(\bm w).
$$
Under Condition~\ref{cond:Bn}, $\Bb_n \leadsto \Bb_C$ in $ \ell^\infty(\Lambda \times [0,1]^d)$, where $\Bb_C$ has continuous trajectories almost surely. We then obtain from \eqref{eq:seq} in Lemma~\ref{lem:stochastic} that
$$
\sup_{(s,t,\bm u) \in \Lambda \times[0, 1]^d} \left| \breve{\Bb}_n^\nu(s,t,\bm u) - \Bb_n(s,t,\bm u) \right| = o_\Pr(1),
$$
and, furthermore, since Conditions~\ref{cond:pd} and~\ref{cond:var:W} hold, from \eqref{eq:bias:claim1} in Lemma~\ref{lem:bias}, that
$$
\sup_{(s,t,\bm u) \in \Lambda \times[0, 1]^d} \left| \breve{\Bb}_n^\nu(s,t,\bm u)- \tilde \Bb_n^\nu(s,t,\bm u) \right| = o(1),
$$
with probability one, where $\tilde \Bb_n^\nu$ is defined in \eqref{eq:tilde:Bb:n:nu}, which implies that
\begin{equation}
  \label{eq:ae:Bb:n:tilde}
  \sup_{(s,t,\bm u) \in \Lambda \times[0, 1]^d} \left| \tilde{\Bb}_n^\nu(s,t,\bm u)- \Bb_n(s,t,\bm u) \right| = o_\Pr(1).
\end{equation}
Moreover, from the triangle inequality, we have
\begin{align*}
\sup_{\substack{(s,t) \in \Lambda \\ \bm u \in [0,1]^d}} & \left| \tilde \Cb_n^\nu(s, t, \bm u) - \tilde \Cb_n(s, t, \bm u)\right| \leq  \sup_{\substack{(s,t) \in \Lambda \\ \bm u \in [0,1]^d}} \left| \tilde \Bb_n^\nu(s, t, \bm u) - \Bb_n(s, t, \bm u)  \right|\\
& +\sum_{j=1}^d \sup_{\bm u \in [0,1]^d}\left|\dot C_{j}(\bm u)\right|\ \sup_{\substack{(s,t) \in \Lambda \\ \bm u \in [0,1]^d}} \left| \tilde \Bb_n^\nu(s,t,\bm u^{(j)})-\Bb_n(s,t,\bm u^{(j)}) \right|.
\end{align*}
The terms on the right-hand side of the previous display converge to zero in probability as a consequence of~\eqref{eq:ae:Bb:n:tilde} and the fact that $0 \leq \dot C_j \leq 1$.
\end{proof}


\section{Proofs of Proposition~\ref{prop:mixture} and Lemma~\ref{lem:barBc:strict}}
\label{proofs:boot}

\begin{proof}[\bf Proof of Proposition~\ref{prop:mixture}]
  Fix $n \in \N$, $\bm x \in (\R^d)^n$ and $\bm r \in \{1,\dots,n\}^d$ and let us check that $\Kc_{\bm r}^{\bm x}$, which can be expressed as in~\eqref{eq:K:dis} under Condition~\ref{cond:unif:marg}, is a multivariate d.f.
  
  By Condition~\ref{cond:smooth:surv:marg}, for any $j \in \{1,\dots,d\}$, the function $\Kc_{r_j,j}^{\bm x}$ defined by $\Kc_{r_j,j}^{\bm x}(u) = \bar \Fc_{j,u}^{\bm x}\{ (r_j - 1) / n\}$, $u \in [0,1]$, is a univariate d.f.\ on $[0,1]$. Indeed, $\Kc_{r_j,j}^{\bm x}$ is right-continuous and increasing on $[0,1]$ and, by properties of the smoothing distributions,
\begin{align*}
  \Kc_{r_j,j}^{\bm x}(0) &= \bar \Fc_{j,0}^{\bm x}\{ (r_j - 1) / n\} = \Pr\{W_{j,0}^{\bm x} > (r_j - 1) / n\} =  \Pr\{0 > (r_j - 1) / n\} = 0, \\
  \Kc_{r_j,j}^{\bm x}(1) &= \bar \Fc_{j,1}^{\bm x}\{ (r_j - 1) / n\} = \Pr\{W_{j,1}^{\bm x} > (r_j - 1) / n\} =  \Pr\{1 > (r_j - 1) / n\} = 1.
\end{align*}
Using additionally Condition~\ref{cond:smooth:cop}, the expression of $\Kc_{\bm r}^{\bm x}$ in~\eqref{eq:K:dis} can then by further simplified to
$$
\Kc_{\bm r}^{\bm x}(\bm u) = \bar \Cc^{\bm x} \{ \Kc_{r_1,1}^{\bm x}(u_1),\dots,\Kc_{r_d,d}^{\bm x}(u_d) \}, \qquad \bm u \in [0,1]^d.
$$
From Sklar's Theorem \citep{Skl59}, $\Kc_{\bm r}^{\bm x}$ is thus a d.f.\ on $[0,1]^d$ with univariate margins $\Kc_{r_1,1}^{\bm x},\dots,\Kc_{r_d,d}^{\bm x}$ and copula $\bar \Cc^{\bm x}$.
\end{proof}

The proof of Lemma~\ref{lem:barBc:strict} below is based on the following lemma.

\begin{lem}
 \label{lem:barB:strict}
For any $n \in \N$ and $t \in [0,1]$, let $\bar B_{n,t}$ be the survival function of a Binomial$(n,t)$. Then, for any $n \in \N$ and $w \in [0,n)$, the function $t \mapsto \bar B_{n,t}(w)$ is strictly increasing on $[0,1]$.
\end{lem}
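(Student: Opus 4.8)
The plan is to write $\bar B_{n,t}(w)$ explicitly as a truncated sum of binomial probabilities and then to show that its derivative in $t$ is strictly positive on the open interval $(0,1)$. First I would observe that, since a Binomial$(n,t)$ variable takes only integer values, for $w \in [0,n)$ one has $\bar B_{n,t}(w) = \Pr\{S_{n,t} > w\} = \sum_{k=m}^n \binom{n}{k} t^k (1-t)^{n-k}$, where $S_{n,t}$ follows the Binomial$(n,t)$ law and $m = \ip{w}+1$. Because $w \in [0,n)$, we have $m \in \{1,\dots,n\}$, so the sum is nonempty and, crucially, does \emph{not} contain the $k=0$ term; this is precisely what will make the derivative non-vanishing.

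The key step is the derivative computation. I would use the elementary identity $\frac{d}{dt}\{\binom{n}{k} t^k (1-t)^{n-k}\} = n\{b_{n-1,t}(k-1) - b_{n-1,t}(k)\}$, where $b_{p,t}(\cdot)$ denotes the Binomial$(p,t)$ probability mass function (with the conventions $b_{n-1,t}(-1) = b_{n-1,t}(n) = 0$); this follows at once from $k\binom{n}{k} = n\binom{n-1}{k-1}$ and $(n-k)\binom{n}{k} = n\binom{n-1}{k}$. Summing the right-hand side over $k = m,\dots,n$ telescopes, and since $b_{n-1,t}(n) = 0$, it leaves $\frac{d}{dt}\bar B_{n,t}(w) = n\,b_{n-1,t}(m-1) = n\binom{n-1}{m-1} t^{m-1}(1-t)^{n-m}$.

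Finally, since $m-1 \geq 0$ and $n-m \geq 0$, this derivative is strictly positive for every $t \in (0,1)$. As $t \mapsto \bar B_{n,t}(w)$ is a polynomial in $t$, hence continuous on $[0,1]$, a positive derivative on the open interval yields strict monotonicity on all of $[0,1]$ by the mean value theorem. The only point requiring care, and the main obstacle if overlooked, is the bookkeeping that forces $m \in \{1,\dots,n\}$: were the $k=0$ term present (i.e.\ if $w < 0$), the sum would be identically $1$ with vanishing derivative, and were $m > n$ (i.e.\ $w \geq n$), the sum would be empty; the hypothesis $w \in [0,n)$ excludes both degeneracies. An alternative route to the same derivative formula is the incomplete-beta representation $\bar B_{n,t}(w) = I_t(m,\,n-m+1)$ combined with the fundamental theorem of calculus, but the telescoping computation keeps the argument self-contained.
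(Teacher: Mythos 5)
Your proof is correct and takes essentially the same route as the paper: both write $\bar B_{n,t}(w)$ as the sum $\sum_{k=\ip{w}+1}^n \binom{n}{k}t^k(1-t)^{n-k}$, differentiate term by term, telescope to the single term $n\,b_{n-1,t}(\ip{w})=\frac{n!}{\ip{w}!\,(n-\ip{w}-1)!}\,t^{\ip{w}}(1-t)^{n-\ip{w}-1}$, and conclude strict monotonicity on $[0,1]$ from positivity of the derivative on $(0,1)$ together with continuity. Your packaging of the telescoping via the identity $\frac{d}{dt}b_{n,t}(k)=n\{b_{n-1,t}(k-1)-b_{n-1,t}(k)\}$ is only a notational streamlining of the paper's reindexing of the two sums, not a different argument.
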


\begin{proof}
Fix $n \in \N$ and $w \in [0,n)$. Since $t \mapsto \bar B_{n,t}(w)$ is continuous on $[0,1]$, it suffices to prove that, for any $t \in (0,1)$, $\frac{\partial}{\partial t}\left\{\bar B_{n,t}(w)\right\}>0$. We have
{\small \begin{align*}
  \frac{\partial}{\partial t}&\left\{\bar B_{n,t}(w)\right\}  = \frac{\partial}{\partial t}\left\{\sum_{k=\ip{w}+1}^n{n \choose k} t^k (1-t)^{n-k}\right\} = \frac{\partial}{\partial t}\left\{\sum_{k=\ip{w}+1}^n{n \choose k} t^k (1-t)^{n-k} \right\}\\
  =& \sum_{k=\ip{w}+1}^n \frac{n!}{k!(n-k)!} \{k t^{k-1} (1-t)^{n-k} - (n-k) t^k (1-t)^{n-k-1} \}\\
  =& \sum_{k=\ip{w}+1}^n\frac{n!}{k!(n-k)!}k t^{k-1} (1-t)^{n-k}  - \sum_{k=\ip{w}+1}^{n-1}\frac{n!}{k!(n-k)!}(n-k) t^k (1-t)^{n-k-1}\\
  =& \sum_{k=\ip{w}+1}^n\frac{n!}{(k-1)!(n-k)!} t^{k-1} (1-t)^{n-k} - \sum_{k=\ip{w}+1}^{n-1}\frac{n!}{k!(n-k-1)!}t^k (1-t)^{n-k-1}\\
  =& \sum_{k=\ip{w}+1}^n\frac{n!}{(k-1)!(n-k)!} t^{k-1} (1-t)^{n-k} - \sum_{k=\ip{w}+2}^n\frac{n!}{(k-1)!(n-k)!}t^{k-1} (1-t)^{n-k}\\
  =& \frac{n!}{\ip{w}!(n-\ip{w}-1)!}t^{\ip{w}} (1-t)^{n-\ip{w}-1} > 0.
\end{align*}}
\end{proof}

\begin{lem}
 \label{lem:barBc:strict}
For any $n \in \N$, $t \in [0,1]$ and $\rho \in (1,n)$, let $\bar \Bc_{n,t,\rho}$ be the survival function of a Beta-Binomial$(n,\alpha, \beta)$, where  $\alpha = t(n - \rho)/(\rho - 1)$ and $\beta = (1-t)(n - \rho)/(\rho - 1)$. Then, for any $n \in \N$, $\rho \in (1,n)$ and $w \in [0,n)$, the function $t \mapsto \bar \Bc_{n,t,\rho}(w)$ is strictly increasing on $[0,1]$.
\end{lem}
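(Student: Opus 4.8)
The plan is to exploit the representation of the Beta-Binomial law as a Beta mixture of Binomial laws and then transfer the strict monotonicity already established in Lemma~\ref{lem:barB:strict} through a stochastic-ordering argument. Fix $n \in \N$, $\rho \in (1,n)$ and $w \in [0,n)$, and set $s = (n-\rho)/(\rho-1)$, which is a strictly positive constant not depending on $t$. Recalling that a Beta-Binomial$(n,\alpha,\beta)$ variable $X$ can be generated by first drawing $\Theta \sim \mathrm{Beta}(\alpha,\beta)$ and then drawing $X \mid \Theta \sim \mathrm{Binomial}(n,\Theta)$, I would first write the survival function as the mixture
$$
\bar \Bc_{n,t,\rho}(w) = \int_0^1 \bar B_{n,\theta}(w)\, f_t(\theta)\,\dd\theta,
$$
where $f_t$ is the density of the $\mathrm{Beta}(ts,(1-t)s)$ distribution, so that $\alpha = ts$, $\beta = (1-t)s$ and $\alpha + \beta = s$ is fixed. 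The crucial feature of this parametrization is precisely that the total mass $s$ of the Beta parameters does not vary with $t$.

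Next I would show that the mixing law is stochastically increasing in $t$. For $0 < t_1 < t_2 < 1$, the ratio of the corresponding Beta densities is, up to a positive multiplicative constant, $\theta \mapsto \{\theta/(1-\theta)\}^{(t_2-t_1)s}$, which is strictly increasing on $(0,1)$. Hence the family $\{f_t\}$ enjoys the monotone likelihood ratio property in $t$, so that $\mathrm{Beta}(t_2 s,(1-t_2)s)$ strictly stochastically dominates $\mathrm{Beta}(t_1 s,(1-t_1)s)$ in the first-order sense. Since Lemma~\ref{lem:barB:strict} guarantees that $g(\theta) := \bar B_{n,\theta}(w)$ is strictly increasing on $[0,1]$ (here we use $w < n$), the desired inequality $\bar \Bc_{n,t_1,\rho}(w) < \bar \Bc_{n,t_2,\rho}(w)$ should follow from integrating the strictly increasing $g$ against the two ordered mixing laws.

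The main obstacle is upgrading the weak first-order dominance inequality to a strict one, and this is where I expect most of the work to lie. I would handle it by the quantile coupling: with $F_t$ the Beta c.d.f., set $\Theta_i = F_{t_i}^{-1}(V)$ for a single uniform $V$, so that first-order dominance yields $\Theta_2 \geq \Theta_1$ almost surely, while the distinctness of the two c.d.f.s (both continuous and strictly increasing on $(0,1)$) forces $F_{t_2} < F_{t_1}$ on a nondegenerate interval and hence $\Theta_2 > \Theta_1$ on an event of positive probability. The strict monotonicity of $g$ then gives $g(\Theta_2) > g(\Theta_1)$ on that event, so $\Ex\{g(\Theta_2)\} > \Ex\{g(\Theta_1)\}$, which is exactly the claimed strict inequality.

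Finally, the two boundary cases $t \in \{0,1\}$ are treated directly: the limiting mixing law degenerates to the point mass at $0$ (resp.\ at $1$), giving $\bar \Bc_{n,0,\rho}(w) = \bar B_{n,0}(w) = 0$ and $\bar \Bc_{n,1,\rho}(w) = \bar B_{n,1}(w) = 1$ (the latter since $w<n$); by the positivity of the interior mixtures these boundary values are strictly dominated by, respectively strictly dominate, the values at interior points, which completes strict monotonicity on the whole of $[0,1]$.
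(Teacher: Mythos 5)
Your proof is correct, and it rests on the same skeleton as the paper's own argument: represent $\bar \Bc_{n,t,\rho}(w)$ as a Beta mixture of binomial survival functions, order the Beta mixing laws stochastically in $t$, and exploit the strict increasingness of $t \mapsto \bar B_{n,t}(w)$ from Lemma~\ref{lem:barB:strict}. The execution, however, differs in two genuine ways. First, where the paper imports both the weak monotonicity of $t \mapsto \bar \Bc_{n,t,\rho}(w)$ and the stochastic ordering $\Theta_1 \leq_{st} \Theta_2$ from Lemmas~29 and~30 of \cite{KojYi22}, you derive the ordering from scratch via the monotone likelihood ratio property of the Beta densities, i.e.\ the observation that the density ratio is proportional to $\{\theta/(1-\theta)\}^{(t_2-t_1)s}$, which hinges on the fact that this parametrization keeps $\alpha+\beta = s$ fixed. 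Second, and more substantially, the strictness step is handled by a different mechanism: the paper argues by contradiction, invoking Theorem~1.A.8 of \cite{ShaSha07} (if $\Theta_1 \leq_{st} \Theta_2$ and some strictly increasing function has equal expectations under both laws, then $\Theta_1$ and $\Theta_2$ are equal in distribution, contradicting $t_1 < t_2$), whereas you prove the strict inequality directly through a quantile coupling, exhibiting an event of positive probability on which $\Theta_2 > \Theta_1$ and on which the strictly increasing integrand therefore separates the two expectations. Your route is more elementary and self-contained, requiring nothing from the stochastic-orders literature, at the cost of some extra length; it also treats the degenerate boundary cases $t \in \{0,1\}$ explicitly (where the mixing law collapses to a point mass and the Beta density argument does not apply), a detail the paper's citation-based contradiction absorbs silently since Theorem~1.A.8 applies to arbitrary laws. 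Both arguments are valid.
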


\begin{proof}
First, notice that, for any $t \in [0,1]$, $\rho \in (1,n)$ and $w \in [0,n)$, by definition of the beta-binomial distribution,
\begin{equation}
 \label{eq:barBc:mixture}
 \bar \Bc_{n,t,\rho}(w)= \Ex_{\Theta}\{\bar B_{n,\Theta}(w)\},
\end{equation}
where $\bar B_{n,t}$ is the survival function of a Binomial$(n,t)$ and $\Theta$ is Beta$(\alpha,\beta)$ with $\alpha = t(n-\rho)/(\rho-1)$ and $\beta = (1-t)(n-\rho)/(\rho-1)$. From Lemma~30 in \cite{KojYi22}, we have that, for any $n \in \N$, $\rho \in (1,n)$ and $w \in [0,n)$, the function $t \mapsto \bar \Bc_{n,t,\rho}(w)$ is increasing on $[0,1]$. It thus suffices to show strict increasingness. Let us prove this by contradiction. Suppose that there exists $0\leq t_1 < t_2 \leq 1$ such that $\bar \Bc_{n,t_1,\rho}(w) = \bar \Bc_{n,t_2,\rho}(w)$ for some $n \in \N$, $\rho \in (1,n)$ and $w \in [0,n)$. Then, from \eqref{eq:barBc:mixture}, we have that
\begin{equation}
 \label{eq:exp:Theta}
\Ex_{\Theta_1}\{\bar B_{n,\Theta_1}(w)\} = \Ex_{\Theta_2}\{\bar B_{n,\Theta_2}(w)\}, 
\end{equation}
where $\Theta_1$ (resp.\ $\Theta_2$) is Beta$(\alpha_1,\beta_1)$ (resp.\ Beta$(\alpha_2,\beta_2)$) with $\alpha_1 = t_1(n-\rho)/(\rho-1)$ and $\beta_1 = (1-t_1)(n-\rho)/(\rho-1)$ (resp.\ $\alpha_2 = t_2(n-\rho)/(\rho-1)$ and $\beta_2 = (1-t_2)(n-\rho)/(\rho-1)$). From the proof of Lemma~29 in \cite{KojYi22}, we have that $\Theta_1 \leq_{st} \Theta_2$, where $\leq_{st}$ denotes the usual stochastic order. Using additionally~\eqref{eq:exp:Theta} and the fact that the function $t \mapsto \bar B_{n,t}(w)$ is strictly increasing on $[0,1]$ from Lemma~\ref{lem:barB:strict}, we have, according to Theorem~1.A.8 of \cite{ShaSha07}, that $\Theta_1$ and $\Theta_2$ have the same distribution. This contradicts the fact that $t_1 < t_2$.
\end{proof}


\section{Proof of Theorem~\ref{thm:asym:val:Cb:n:hash}}
\label{proof:thm:asym:val:Cb:n:hash}

The proof of Theorem~\ref{thm:asym:val:Cb:n:hash} is based on two lemmas which we show first.

Let $\Phi$ be the map from $\ell^\infty([0,1]^d)$ to $\ell^\infty([0,1]^d)$ defined for any d.f $H$ on $[0,1]^d$ whose univariate margins $H_1,\dots,H_d$ do not assign mass at zero by
\begin{equation}
 \label{eq:Phi}
 \Phi(H)(\bm u) = H\{H_1^{-1}(u_1), \dots, H_d^{-1}(u_d)\}, \qquad \bm u \in [0,1]^d.
\end{equation}

\begin{lem}
\label{lem:unif:as:C:1n:nu}
Assume that the random vectors in $\Xc_{1:n}$ are i.i.d.\ and that Condition~\ref{cond:var:W:weak} holds. Then, almost surely,
\begin{equation}
  \label{eq:unif:as:C:1n:nu}
  \sup_{\bm u \in [0,1]^d} |C_{1:n}^\nu(\bm u) - C(\bm u) | = o(1),
\end{equation}
where $C_{1:n}^\nu$ is defined in~\eqref{eq:C:kl:nu}. 
\end{lem}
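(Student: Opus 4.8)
The plan is to split the smoothing error $C_{1:n}^\nu(\bm u)-C(\bm u)$ into a stochastic part, controlled by the classical empirical copula, and a deterministic smoothing bias, controlled by the variance of the smoothing distributions, and to bound each uniformly in $\bm u$. First I would write, for any $\bm u \in [0,1]^d$,
\begin{align*}
C_{1:n}^\nu(\bm u) - C(\bm u) = \int_{[0,1]^d}\left\{C_{1:n}(\bm w) - C(\bm w)\right\} \dd \nu_{\bm u}^{\sss \Xc_{1:n}}(\bm w) + \left\{\int_{[0,1]^d} C(\bm w) \dd \nu_{\bm u}^{\sss \Xc_{1:n}}(\bm w) - C(\bm u)\right\},
\end{align*}
using the representation~\eqref{eq:C:kl:nu} of $C_{1:n}^\nu$ with $k=1$, $l=n$. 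Since $\nu_{\bm u}^{\sss \Xc_{1:n}}$ is a probability measure, the first (stochastic) term is bounded in absolute value, uniformly in $\bm u$, by $\sup_{\bm w \in [0,1]^d}|C_{1:n}(\bm w) - C(\bm w)|$.

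The second step is to recall that, in the i.i.d.\ case, the classical empirical copula is uniformly strongly consistent, that is, $\sup_{\bm w \in [0,1]^d}|C_{1:n}(\bm w) - C(\bm w)| = o(1)$ almost surely. This is a classical fact following from the multivariate Glivenko--Cantelli theorem applied to the unobservable sample $\bm U_1,\dots,\bm U_n$, combined with the uniform strong consistency of the marginal empirical d.f.s that relates the pseudo-observations to the $\bm U_i$. This disposes of the stochastic term.

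The third and key step is to handle the bias term. Here I would use that $\int_{[0,1]^d} C(\bm w)\dd\nu_{\bm u}^{\sss \Xc_{1:n}}(\bm w) = \Ex\{C(\bm W_{\bm u}^{\sss \Xc_{1:n}})\mid \Xc_{1:n}\}$, that $\bm W_{\bm u}^{\bm x}$ has mean $\bm u$, and the standard Lipschitz property of copulas \citep{Nel06}, namely $|C(\bm v)-C(\bm w)| \leq \sum_{j=1}^d |v_j-w_j|$ for $\bm v, \bm w \in [0,1]^d$. Jensen's inequality followed by Cauchy--Schwarz then yields, under Condition~\ref{cond:var:W:weak},
\begin{align*}
\left|\int_{[0,1]^d} C(\bm w) \dd \nu_{\bm u}^{\sss \Xc_{1:n}}(\bm w) - C(\bm u)\right| \leq \sum_{j=1}^d \Ex\left|W_{j,u_j}^{\sss \Xc_{1:n}} - u_j\right| \leq \sum_{j=1}^d \sqrt{\Var\left(W_{j,u_j}^{\sss \Xc_{1:n}}\right)} \leq d\sqrt{a_n}.
\end{align*}
The point to get right here, and the only place uniformity matters, is that this bound is independent of both $\bm u$ and of the realization of $\Xc_{1:n}$; hence the bias term is $O(\sqrt{a_n})$ surely, which vanishes since $a_n \downarrow 0$.

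Combining the two bounds gives $\sup_{\bm u \in [0,1]^d}|C_{1:n}^\nu(\bm u) - C(\bm u)| \leq \sup_{\bm w \in [0,1]^d}|C_{1:n}(\bm w) - C(\bm w)| + d\sqrt{a_n} = o(1)$ almost surely, as claimed. I do not anticipate a serious obstacle: the argument is essentially a triangle-inequality decomposition, and the only genuinely probabilistic ingredient is the well-known uniform strong consistency of $C_{1:n}$, while the smoothing bias requires no probability beyond the variance bound of Condition~\ref{cond:var:W:weak} (which, as noted, is implied by Condition~\ref{cond:var:W}).
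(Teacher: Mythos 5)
Your proof is correct, and it reuses the paper's overall decomposition: the paper also splits the supremum into a stochastic term $I_n$, bounded by $\sup_{\bm w \in [0,1]^d}|C_{1:n}(\bm w)-C(\bm w)|$, and a bias term $J_n$, and your treatment of the stochastic term (multivariate Glivenko--Cantelli for the unobservable uniforms plus the marginal rank/quantile corrections) is exactly the argument the paper writes out in detail via the map $\Phi$ and the bound $\sup_{\bm u \in [0,1]^d}|C_{1:n}(\bm u)-\Phi(F_{1:n})(\bm u)|\leq d/n$. Where you genuinely diverge is the bias term. The paper follows Lemma~3.2 of \cite{SegSibTsu17}: fix $\eta>0$, split the integral over $\{\bm w : |\bm w-\bm u|_\infty\leq\eps\}$ and its complement, use the Lipschitz property of $C$ on the near region and Chebyshev's inequality together with Condition~\ref{cond:var:W:weak} on the far region, arguing conditionally on $\bm X_1,\bm X_2,\dots$; this gives only a qualitative $o(1)$. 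You instead combine the Lipschitz property of $C$ with Jensen and Cauchy--Schwarz, using that $\bm W_{\bm u}^{\bm x}$ has mean $\bm u$ for \emph{every} $\bm x$, to obtain the sure bound $d\sqrt{a_n}$, uniform in $\bm u$ and in the realization of the data. Your route is shorter and yields an explicit $O(\sqrt{a_n})$ rate for the smoothing bias. What the paper's $\eps$-splitting scheme buys in exchange is reusability: it works whenever the integrand is merely uniformly continuous rather than Lipschitz, and indeed the paper recycles it (e.g., in the proof of Proposition~\ref{prop:wc:Delta:nu:trunc}, where the partial derivative $\dot C_j$ plays the role of $C$), whereas your shortcut exploits the copula-specific Lipschitz inequality. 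Both arguments hinge on the same key observation you flag, namely that the mean and variance constraints hold for every data realization, so no additional conditioning subtleties arise.
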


\begin{proof}
  The supremum on the left-hand side of~\eqref{eq:unif:as:C:1n:nu} is smaller than $I_n + J_n$, where
\begin{align*}
  I_n &= \sup_{\bm u \in [0,1]^d} \left|\int_{[0,1]^d} C_{1:n}(\bm w) \dd \nu_{\bm u}^{\sss \Xc_{1:n}}(\bm w) - \int_{[0,1]^d} C(\bm w) \dd \nu_{\bm u}^{\sss \Xc_{1:n}}(\bm w) \right|, \\
  J_n &= \sup_{\bm u \in [0,1]^d} \left|\int_{[0,1]^d} C(\bm w) \dd \nu_{\bm u}^{\sss \Xc_{1:n}}(\bm w) - C(\bm u) \right|.
\end{align*}
\emph{Term $I_n$:} From the triangle inequality, $I_n$ is smaller than
\begin{align*}
\sup_{\bm u \in [0,1]^d}& |C_{1:n}(\bm u) - C(\bm u) | \leq I_n' + I_n'' + I_n''',
\end{align*} where 
\begin{align*}
I_n' &= \sup_{\bm u \in [0,1]^d} |C_{1:n}(\bm u) - \Phi(G_{1:n})(\bm u) |, \qquad I_n'' = \sup_{\bm u \in [0,1]^d} |\Phi(G_{1:n})(\bm u) -  G_{1:n}(\bm u) |,\\
I_n''' &= \sup_{\bm u \in [0,1]^d} |G_{1:n}(\bm u) -  C(\bm u) |,
\end{align*}
where the map $\Phi$ is defined in~\eqref{eq:Phi} and $G_{1:n}$ is empirical d.f.\ of the unobservable random sample $\bm U_1,\dots,\bm U_n$ obtained from $\Xc_{1:n}$ by the probability integral transformations $U_{ij} = F_j (X_{ij})$, $i \in \{1,\dots,n\}$, $j \in \{1, \dots, d\}$. Using the well-known facts \citep[see, e.g.,][]{Seg12} that $\Phi(G_{1:n}) = \Phi(F_{1:n})$, where $F_{1:n}$ is the empirical d.f. of $\Xc_{1:n}$, and 
$$
\sup_{\bm u \in [0,1]^d} |C_{1:n}(\bm u) - \Phi(F_{1:n})(\bm u) | \leq \frac{d}{n},
$$
we obtain that $I_n' = o(1)$. Furthermore, from the Glivenko-Cantelli lemma \citep[see, e.g.,][Theorem~19.1]{Van98}, $I_n''' = o(1)$ with probability one. Finally, using a well-known property of multivariate d.f.s \citep[see, e.g.,][Lemma~1.2.14]{DurSem15}, the well-known fact, for any $j \in \{1,\dots,d\}$, $\sup_{u \in [0,1]} |G_{1:n,j}^{-1}(u) - u| = \sup_{u \in [0,1]} |G_{1:n,j}(u) - u|$ \citep[see, e.g.,][Chapter~3]{ShoWel86} and, again, the Glivenko-Cantelli lemma, we obtain that, almost surely,
$$
I_n'' \leq \sum_{j=1}^d \sup_{u \in [0,1]} |G_{1:n,j}^{-1}(u) - u| = \sum_{j=1}^d \sup_{u \in [0,1]} |G_{1:n,j}(u) - u| = o(1).
$$

\emph{Term $J_n$:} We proceed as in the proof of Lemma~3.2 of \cite{SegSibTsu17}. Fix $\eta > 0$ and let us show that, with probability one, $J_n$ can be made smaller than $\eta$ provided $n$ is large enough.  Let $|\cdot|_\infty$ denote the maximum norm on $\R^d$. For any $\eps > 0$, we have that 
\begin{align*}
J_n =& \sup_{\bm u \in [0,1]^d} \left|\int_{[0,1]^d} \{C(\bm w) - C(\bm u)\} \dd \nu_{\bm u}^{\sss \Xc_{1:n}}(\bm w) \right|\\
                   \leq & \sup_{\bm u \in [0, 1]^d} \left| \int_{\{\bm w \in [0,1]^d: |\bm u - \bm w|_\infty  \leq \eps \}} \{ C(\bm w) - C(\bm u) \} \dd \nu_{\bm u}^{\sss \Xc_{1:n}}(\bm w) \right| \\
                                             &+ \sup_{\bm u \in [0, 1]^d} \left| \int_{\{\bm w \in [0,1]^d: |\bm u - \bm w|_\infty > \eps \}} \{ C(\bm w) - C(\bm u) \} \dd \nu_{\bm u}^{\sss \Xc_{1:n}}(\bm w) \right| \leq J_n' + J_n'',
\end{align*}
where
\begin{align*}
  J_n' &= \sup_{\substack{(\bm u, \bm w) \in [0,1]^{2d}\\|\bm u - \bm w|_{\infty} \leq \eps}} \left|C(\bm w) - C(\bm u) \right|, \\
  J_n'' &= \sup_{\bm u \in [0, 1]^d} \nu_{\bm u}^{\sss \Xc_{1:n}}(\{\bm w \in [0,1]^d:|\bm u - \bm w|_{\infty} > \eps \}).
\end{align*}
Let $\eps = \eta/(2d)$. Then, from the Lipschitz continuity of $C$, $J_n' \leq \eta/2$. As far as $J_n''$ is concerned, conditionally on $\bm X_1,\bm X_2,\dots$, for almost any sequence $\bm X_1,\bm X_2,\dots$, using Chebyshev's inequality and Condition~\ref{cond:var:W:weak}, we have that
\begin{align*}
  J_n'' &= \sup_{\bm u \in [0, 1]^d} \Pr \left\{ \left| \bm W_{\bm u}^{\sss \Xc_{1:n}}  - \bm u) \right|_{\infty} > \eps \mid \Xc_{1:n} \right\} \\
        &= \sup_{\bm u \in [0, 1]^d} \Pr \left[ \bigcup_{j=1}^d \left\{ \left| W_{j,u_j}^{\sss \Xc_{1:n}} - u_j \right| >\eps \right\} \mid \Xc_{1:n} \right]\\
        & \leq \sup_{\bm u \in [0, 1]^d} \sum_{j=1}^{d}\Pr \left\{ \left| W_{j,u_j}^{\sss \Xc_{1:n}}- u_j \right| >\eps \mid \Xc_{1:n} \right\} \\
        &\leq \sup_{\bm u \in [0, 1]^d} \sum_{j=1}^{d}\frac{\Var(W_{j,u_j}^{\sss \Xc_{1:n}} \mid \Xc_{1:n} )}{\eps^2} \leq \frac{d a_n}{\eps^2}.
\end{align*}
which implies that, for $n$ sufficiently large, $J_n'' \leq \eta/2$. The latter holds conditionally on $\bm X_1,\bm X_2,\dots$ for almost any sequence $\bm X_1,\bm X_2,\dots$, which completes the proof.
\end{proof}

Next, we recall the mode of convergence classically used to state asymptotic validity results of resampling techniques when dealing with empirical processes; see, e.g., \citet[Chapter~2.9]{vanWel96} or \citet[Section~2.2.3]{Kos08}. Let
\begin{multline*}
BL_1 = \{h:\ell^\infty([0,1]^d) \to [-1,1] \text{ such that,} \\ \text{for all } x, y \in \ell^\infty([0,1]^d), \, |h(x) - h(y)|  \leq \sup_{\bm u \in [0,1]^d} |x(\bm u) - y(\bm u)|\}.
\end{multline*}
Let $\Xb_n = \Xb_n(\Xc_{1:n},\Wc_n)$ be a sequence of bootstrapped empirical processes in $\ell^\infty([0,1]^d)$ depending on an additional source of randomness $\Wc_n$ (often called the ``bootstrap weights''). For the smooth bootstraps under consideration, $\Wc_n$ is independent of the data $\Xc_{1:n}$ and corresponds to $n$ independent copies of the independent random variables $I$ and $\bm U^\#$ necessary to carry out Algorithm~\ref{algo:sampling} (see also~\eqref{eq:V:hash}) $n$ times independently. The notation $\Xb_n \cwc \Xb$ then means that
\begin{itemize}
\item $\sup_{h \in BL_1} |\Ex_\Wc \{h(\Xb_n)\} - \Ex \{ h(\Xb) \} \to 0$ in outer probability,
\item $\Ex_\Wc \{h(\Xb_n)^*\} - \Ex_\Wc \{h(\Xb_n)_*\} \p 0$ for all $h \in BL_1$,
\end{itemize}
where $\Ex_\Wc$ denotes an expectation with respect to the bootstrap weights $\Wc_n$ only and $h(\Xb_n)^*$ and $h(\Xb_n)_*$ denote the minimal measurable majorant and maximal measurable minorant with respect to $(\Xc_{1:n},\Wc_n)$.

The next lemma is very closely related to Proposition 3.3 of \cite{KirSegTsu21}.

\begin{lem}
  \label{lem:cwc:Cb:n:hash}
  Assume that the random vectors in $\Xc_{1:n}$ are i.i.d., and that Conditions~\ref{cond:unif:marg}, \ref{cond:smooth:surv:marg}, \ref{cond:smooth:cop}, \ref{cond:pd} and~\ref{cond:var:W} hold. Then,
  \begin{equation}
    \label{eq:cwc:Cb:n:hash}
  \sqrt{n}(C_{1:n}^{\sss \#}-C_{1:n}^\nu)  \cwc \Cb_C(0,1,\cdot),
  \end{equation}
  where $\Cb_C$ is defined in~\eqref{eq:Cb:C}.
\end{lem}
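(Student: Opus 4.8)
The plan is to exploit the fact that, conditionally on $\Xc_{1:n}$, the resample $\Vc_{1:n}^{\sss \#}$ is an i.i.d.\ sample of size $n$ from the \emph{genuine} copula $D_n := C_{1:n}^\nu$. Indeed, since the observations are i.i.d., Condition~\ref{cond:no:ties} holds, so Conditions~\ref{cond:unif:marg},~\ref{cond:smooth:surv:marg} and~\ref{cond:smooth:cop} together with Proposition~\ref{prop:genuine:copula} guarantee that $C_{1:n}^\nu$ is a copula, and by Lemma~\ref{lem:unif:as:C:1n:nu} (which needs only Condition~\ref{cond:var:W:weak}, implied by Condition~\ref{cond:var:W}) we have $\sup_{\bm u \in [0,1]^d}|D_n(\bm u) - C(\bm u)| \to 0$ almost surely. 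The quantity $\sqrt{n}(C_{1:n}^{\sss \#} - C_{1:n}^\nu)$ is then nothing but the classical empirical copula process of an i.i.d.\ sample drawn from the triangular array of copulas $D_n$ converging to $C$, so the whole argument reduces to a conditional empirical-copula functional central limit theorem.

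First I would control the resample's joint empirical process. Letting $G_{1:n}^{\sss \#}$ be the multivariate empirical d.f.\ of $\Vc_{1:n}^{\sss \#}$ and $\alpha_n^{\sss \#} = \sqrt{n}(G_{1:n}^{\sss \#} - D_n)$, the plan is to establish the conditional weak convergence $\alpha_n^{\sss \#} \cwc \Bb_C(0,1,\cdot)$, where $\Bb_C(0,1,\cdot)$ is the $C$-Brownian bridge, i.e.\ the tight centered Gaussian process with covariance $C(\bm u \wedge \bm v) - C(\bm u)C(\bm v)$. Conditionally on the data the summands are i.i.d.\ and uniformly bounded, so conditional finite-dimensional convergence follows from the Lindeberg central limit theorem, the conditional covariances converging to those of $\Bb_C(0,1,\cdot)$ because $D_n \to C$ uniformly; conditional asymptotic equicontinuity follows because the class $\{\1(\cdot \le \bm u): \bm u \in [0,1]^d\}$ is a universal Donsker (VC) class, whence the relevant entropy bounds hold uniformly over the sampling law $D_n$. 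Verifying the two defining requirements of the mode $\cwc$ (convergence of $\sup_{h \in BL_1}$ of the conditional expectations in outer probability, and the measurability condition) is then routine.

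Next I would transfer this to the empirical copula through the copula map $\Phi$ in~\eqref{eq:Phi}. Since $D_n = C_{1:n}^\nu$ is a genuine copula its univariate margins are uniform and $\Phi(D_n) = D_n$; moreover the standard bound $\sup_{\bm u}|C_{1:n}^{\sss \#}(\bm u) - \Phi(G_{1:n}^{\sss \#})(\bm u)| \le d/n$ (used already in the proof of Lemma~\ref{lem:unif:as:C:1n:nu}) gives $\sqrt{n}(C_{1:n}^{\sss \#} - C_{1:n}^\nu) = \sqrt{n}\{\Phi(G_{1:n}^{\sss \#}) - \Phi(D_n)\} + o(1)$. Under Condition~\ref{cond:pd}, $\Phi$ is Hadamard differentiable at $C$ tangentially to $\Cc([0,1]^d)$, with derivative $\Phi_C'(\beta)(\bm u) = \beta(\bm u) - \sum_{j=1}^d \dot C_j(\bm u)\,\beta(\bm u^{(j)})$. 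Applying the conditional (bootstrap) functional delta method with random base points $D_n \to C$ and conditional increments $\alpha_n^{\sss \#} \cwc \Bb_C(0,1,\cdot)$ would yield $\sqrt{n}(C_{1:n}^{\sss \#} - C_{1:n}^\nu) \cwc \Phi_C'(\Bb_C(0,1,\cdot))$, and recalling from~\eqref{eq:Bb:C}--\eqref{eq:Cb:C} that $\Phi_C'(\Bb_C(0,1,\cdot)) = \Bb_C(0,1,\cdot) - \sum_{j=1}^d \dot C_j(\cdot)\,\Bb_C(0,1,\cdot^{(j)}) = \Cb_C(0,1,\cdot)$ would deliver~\eqref{eq:cwc:Cb:n:hash}.

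The hard part is this last step: the delta method must be run with a \emph{random, $n$-dependent} base point $D_n = C_{1:n}^\nu$ that merely converges to $C$, and one must ensure that the derivative appearing in the limit is that of the \emph{limit} copula $C$ (hence the $\dot C_j$), not some derivative of $D_n$ itself, which need not even exist, let alone converge, for a general smoothing scheme. Concretely, the expansion of $\Phi(G_{1:n}^{\sss \#}) - \Phi(D_n)$ produces increments of $D_n - C$ over $O(n^{-1/2})$-neighbourhoods, and since both $D_n$ and $C$ are only Lipschitz-$1$ the crude bound merely gives $O(n^{-1/2})$, whereas one needs these increments to be $o(n^{-1/2})$ uniformly; this is exactly where the almost sure uniform convergence $D_n \to C$ of Lemma~\ref{lem:unif:as:C:1n:nu} and the continuity of $\dot C_j$ from Condition~\ref{cond:pd} are used, in the same spirit as the proof that smoothing is asymptotically negligible in Theorem~\ref{thm:Cb:n:nu}. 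Equivalently, I would establish the conditional asymptotic representation $\sqrt{n}(C_{1:n}^{\sss \#} - C_{1:n}^\nu)(\bm u) = \alpha_n^{\sss \#}(\bm u) - \sum_{j=1}^d \dot C_j(\bm u)\,\alpha_n^{\sss \#}(\bm u^{(j)}) + o_\Pr(1)$ uniformly in $\bm u$ and conclude by the continuous mapping theorem, mirroring Proposition~3.3 of \cite{KirSegTsu21}.
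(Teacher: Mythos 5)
Your overall route is the same as the paper's: (i) establish the conditional weak convergence $\sqrt{n}(G_{1:n}^{\sss \#} - C_{1:n}^\nu) \cwc \Bb_C(0,1,\cdot)$ (the paper gets this from Lemma~\ref{lem:unif:as:C:1n:nu} together with Step~1 of the proof of Proposition~3.3 in \cite{KirSegTsu21}; your triangular-array Lindeberg/Donsker argument is the same idea carried out by hand); (ii) pass to the empirical copula of the resample via the map $\Phi$ in~\eqref{eq:Phi}, using $\Phi(C_{1:n}^\nu) = C_{1:n}^\nu$, the Hadamard differentiability of $\Phi$ at $C$ under Condition~\ref{cond:pd} \citep[Theorem~2.4]{BucVol13}, and the functional delta method for the bootstrap; (iii) conclude with $\sup_{\bm u \in [0,1]^d}|C_{1:n}^{\sss \#}(\bm u) - \Phi(G_{1:n}^{\sss \#})(\bm u)| \leq d/n$, valid because the resample has no ties. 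Steps (i) and (iii) of your proposal are sound.

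The gap is in your justification of step (ii), precisely the point you yourself call the hard part. You claim that the increments of $D_n - C = C_{1:n}^\nu - C$ over $O(n^{-1/2})$-neighbourhoods are $o(n^{-1/2})$ because $\sup_{\bm u \in [0,1]^d}|D_n(\bm u) - C(\bm u)| \to 0$ almost surely (Lemma~\ref{lem:unif:as:C:1n:nu}) and $\dot C_j$ is continuous. That implication is false: an $o(1)$ sup-norm bound gives no control of oscillations at scale $n^{-1/2}$ (a perturbation of amplitude and period $n^{-1/2}$ has vanishing sup norm but increments of exact order $n^{-1/2}$ over such neighbourhoods), and continuity of $\dot C_j$ says nothing about $D_n - C$. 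What is actually needed is $\sqrt{n}$-rate control of $D_n - C$, namely that $\sqrt{n}(C_{1:n}^\nu - C) = \Cb_n^\nu(0,1,\cdot)$ converges weakly to a limit with continuous trajectories and is therefore asymptotically uniformly equicontinuous in probability. This is exactly what Theorem~\ref{thm:Cb:n:nu} supplies under the lemma's hypotheses (Conditions~\ref{cond:pd} and~\ref{cond:var:W} are assumed, and the i.i.d.\ assumption yields Condition~\ref{cond:Bn}, hence $\Cb_n \leadsto \Cb_C$), and it is also exactly the unconditional-convergence hypothesis of the bootstrap delta method \citep[Theorem~3.9.11]{vanWel96} that the paper invokes; the fact that the conditional limit $\Bb_C(0,1,\cdot)$ differs in law from the unconditional limit $\Cb_C(0,1,\cdot)$ is harmless, since the proof of that theorem only uses joint convergence to independent tight limits concentrated on $\Cc([0,1]^d)$ and linearity of the derivative (see also Lemma~3.1 of \cite{BucKoj19}). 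With your increment claim replaced by this ingredient, your argument coincides with the paper's.
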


\begin{proof}
Let $G_{1:n}^{\sss \#}$ be the empirical d.f.\ of $\Vc_{1:n}^{\sss \#}$. Using Lemma~\ref{lem:unif:as:C:1n:nu} and proceeding as in Step~1 of the proof of Proposition 3.3 of \cite{KirSegTsu21}, one obtains that
\begin{equation}
  \label{eq:wc:Gb:n:hash}
  \sqrt{n}(G_{1:n}^{\sss \#}-C_{1:n}^\nu) \cwc \Bb_C(0,1,\cdot),
\end{equation}
where $\Bb_C$ is defined in~\eqref{eq:Bb:C}. Then, proceeding as in Step~2 of the proof of Proposition 3.3 of \cite{KirSegTsu21}, that is, combining~\eqref{eq:wc:Gb:n:hash} with the Hadamard differentiability of the map $\Phi$ in~\eqref{eq:Phi} established in Theorem~2.4 of \cite{BucVol13}, the functional delta method for the bootstrap ``in probability'' \citep[Theorem~3.9.11]{vanWel96} and the fact $\Phi(C_{1:n}^\nu) = C_{1:n}^\nu$ (since $C_{1:n}^\nu$ has standard uniform margins under Condition~\ref{cond:unif:marg} in the considered i.i.d.\ setting), one obtains
\begin{equation}
  \label{eq:wc:tilde:Cb:n:hash}
  \sqrt{n}(\Phi(G_{1:n}^{\sss \#}) - C_{1:n}^\nu)  \cwc \Cb_C(0,1,\cdot).
\end{equation}
The desired result finally follows from~\eqref{eq:wc:tilde:Cb:n:hash} and the well-known fact that
$$
\sup_{\bm u \in [0,1]^d} |C_{1:n}^{\sss \#}(\bm u) - \Phi(G_{1:n}^{\sss \#})(\bm u) | \leq \frac{d}{n}
$$
since the components samples of $\Vc_{1:n}^{\sss \#}$ contain no ties with probability one.
\end{proof}

\begin{proof}[\bf Proof of Theorem~\ref{thm:asym:val:Cb:n:hash}]
Combining Lemma~\ref{lem:cwc:Cb:n:hash} with Lemma~3.1 of \cite{BucKoj19}, we obtain that~\eqref{eq:cwc:Cb:n:hash} is equivalent to 
\begin{multline}
  \label{eq:joint}
\big(\Cb_n(0,1,\cdot), \sqrt{n}(C_{1:n}^{\sss \#,[1]} - C_{1:n}^\nu), \sqrt{n}(C_{1:n}^{\sss \#,[2]} - C_{1:n}^\nu) \big) \\ \leadsto \big(\Cb_C(0,1,\cdot),\Cb_C^{\sss [1]}(0,1,\cdot),\Cb_C^{\sss [2]}(0,1,\cdot)\big) 
\end{multline}
in $\{\ell^\infty([0,1]^d)\}^3$, where $\Cb_n$ is defined in~\eqref{eq:Cb:n}. From Theorem~\ref{thm:Cb:n:nu}, we have that
\begin{equation}
  \label{eq:intermediate}
 \sup_{\bm u \in [0,1]^d} \sqrt{n} |C_{1:n}^\nu(\bm u) - C_{1:n}(\bm u)| = \sup_{\bm u \in [0,1]^d} |\Cb_n^\nu(0,1,\bm u) - \Cb_n(0,1,\bm u)| = o_\Pr(1),
\end{equation}
where $\Cb_n^\nu$ is defined in~\eqref{eq:Cb:n:nu}. The first joint weak convergence in Theorem~\ref{thm:asym:val:Cb:n:hash} then follows from~\eqref{eq:joint} and~\eqref{eq:intermediate}.

Fix $j \in \{1,2\}$. Since~\eqref{eq:intermediate} holds, to establish the second joint weak convergence from the first, it suffices to show that
\begin{equation}
  \label{eq:aim}
  \sup_{\bm u \in [0,1]^d} \left| \sqrt{n}\{C_{1:n}^{\sss \#,\nu,[j]}(\bm u) - C_{1:n}^\nu(\bm u) \}  - \sqrt{n} \{ C_{1:n}^{\sss \#,[j]}(\bm u) - C_{1:n}(\bm u) \} \right| = o_\Pr(1).
\end{equation}
The supremum on the left hand-side of~\eqref{eq:aim} is smaller than $I_n + J_n$, where
{\small \begin{align*}
I_n =& \sup_{\bm u \in [0,1]^d} \left| \int_{[0,1]^d} \sqrt{n} \{ C_{1:n}^{\sss \#,[j]}(\bm w) - C_{1:n}(\bm w) \} \dd \nu_{\bm u}^{\sss \Vc_{1:n}^{\sss \#, [j]}}(\bm w) - \sqrt{n} \{ C_{1:n}^{\sss \#,[j]}(\bm u) - C_{1:n}(\bm u) \} \right|, \\
J_n =& \sqrt{n} \sup_{\bm u \in [0,1]^d} \left|\int_{[0,1]^d} C_{1:n}(\bm w)\dd \nu_{\bm u}^{\sss \Vc_{1:n}^{\sss \#, [j]}}(\bm w) - \int_{[0,1]^d} C_{1:n}(\bm w)\dd \nu_{\bm u}^{\sss \Xc_{1:n}}(\bm w) \right|.
\end{align*}}
Since, according to the first claim of Theorem~\ref{thm:asym:val:Cb:n:hash}, $\sqrt{n}(C_{1:n}^{\sss \#,[j]} - C_{1:n}) \leadsto \Cb_C(0,1,\cdot)$ in $\ell^\infty([0, 1]^d)$ and $\Cb_C(0,1,\cdot)$ has continuous trajectories almost surely, it can be verified by proceeding as in the proof of Lemma~\ref{lem:stochastic} that $I_n = o_\Pr(1)$. For the term $J_n$, we have that $J_n \leq K_n + L_n$, where
\begin{align*}
  K_n =& \sqrt{n}\sup_{\bm u \in [0,1]^d}\left|\int_{[0,1]^d}\{C_{1:n}(\bm w)-C(\bm w)\}\dd \nu_{\bm u}^{\sss \Vc_{1:n}^{\sss \#, [j]}}(\bm w) \right. \\ &- \left. \int_{[0,1]^d}\{C_{1:n}(\bm w)-C(\bm w)\}\dd \nu_{\bm u}^{\sss \Xc_{1:n}}(\bm w) \right|,\\
  L_n =& \sqrt{n}\sup_{\bm u \in [0,1]^d}\left|\int_{[0,1]^d}C(\bm w)\dd \nu_{\bm u}^{\sss \Vc_{1:n}^{\sss \#, [j]}}(\bm w)-\int_{[0,1]^d}C(\bm w)\dd \nu_{\bm u}^{\sss \Xc_{1:n}}(\bm w) \right|.
\end{align*}
The term $K_n$ is smaller than $K_n' + K_n''$, where
\begin{align*}
  K_n' &= \sup_{\bm u \in [0,1]^d} \left| \int_{[0,1]^d} \Cb_n(0,1,\bm w) \dd \nu_{\bm u}^{\sss \Vc_{1:n}^{\sss \#,[j]}}(\bm w)- \Cb_n(0,1,\bm u)\right|, \\
  K_n'' &= \sup_{\bm u \in [0,1]^d} \left| \Cb_n(0,1,\bm u)-\int_{[0,1]^d}\Cb_n(0,1,\bm w) \dd \nu_{\bm u}^{\sss \Xc_{1:n}}(\bm w)\right|.
\end{align*}
From~\eqref{eq:nonseq} in Lemma~\ref{lem:stochastic}, $K_n'' = o_\Pr(1)$ and, proceeding again as in the proof of the latter lemma, it can be verified that $K_n' = o_\Pr(1)$. The term $L_n$ is smaller than $L_n' + L_n''$, where
\begin{align*}           
L_n' &= \sqrt{n}\sup_{\bm u \in [0,1]^d}\left|\int_{[0,1]^d}C(\bm w)\dd \nu_{\bm u}^{\sss \Vc_{1:n}^{\sss \#,[j]}}(\bm w)-C(\bm u)\right|,\\
L_n'' &= \sqrt{n}\sup_{\bm u \in [0,1]^d}\left|C(\bm u)-\int_{[0,1]^d}C(\bm w)\dd \nu_{\bm u}^{\sss \Xc_{1:n}}(\bm w) \right|.
\end{align*}
The term $L_n''$ converges almost surely to zero as a consequence of~\eqref{eq:bias:claim2} in Lemma~\ref{lem:bias}. The proof of the latter result can be adapted to verify that the term $L_n'$ also converges almost surely to zero. Hence, \eqref{eq:aim} holds, which completes the proof. 
\end{proof}


\section{Proof of Theorem~\ref{thm:ae:Cb:n:hat:check}}
\label{proof:thm:ae:Cb:n:hat:check}

The proof of Theorem~\ref{thm:ae:Cb:n:hat:check} is based on the following lemma which we prove first.

\begin{lem}
\label{lem:Bb:n:nu}
Under Conditions~\ref{cond:DGP} and~\ref{cond:var:W:weak} (the latter is implied by Condition~\ref{cond:var:W}), for any $b \in \N$,
\begin{align}
  \label{eq:ae:Bb:n:hat}
  \sup_{(s,t,\bm u) \in \Lambda \times [0,1]^d} \left| \hat{\Bb}_n^{\sss [b],\nu} (s,t,\bm u)-\hat{\Bb}_n^{\sss [b]} (s,t,\bm u)\right| = o_\Pr(1), \\
  \label{eq:ae:Bb:n:check}
  \sup_{(s,t,\bm u) \in \Lambda \times [0,1]^d} \left| \check{\Bb}_n^{\sss [b],\nu} (s,t,\bm u)-\check{\Bb}_n^{\sss [b]} (s,t,\bm u)\right| = o_\Pr(1).
\end{align}
\end{lem}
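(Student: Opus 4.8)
The plan is to observe that each smooth multiplier replicate is simply the $\nu$-smoothing of its non-smooth counterpart, so that the two claims reduce to instances of Lemma~\ref{lem:stochastic}. Indeed, reading off the integral representations in~\eqref{eq:Bb:n:nu:hat} and~\eqref{eq:Bb:n:nu:check}, for every $(s,t,\bm u) \in \Lambda \times [0,1]^d$ one has
\[
\hat{\Bb}_n^{\sss [b],\nu}(s,t,\bm u) - \hat{\Bb}_n^{\sss [b]}(s,t,\bm u) = \int_{[0,1]^d} \hat{\Bb}_n^{\sss [b]}(s,t,\bm w)\, \dd \nu_{\bm u}^{\sss \Xc_{1:n}}(\bm w) - \hat{\Bb}_n^{\sss [b]}(s,t,\bm u),
\]
and likewise $\check{\Bb}_n^{\sss [b],\nu}(s,t,\bm u) - \check{\Bb}_n^{\sss [b]}(s,t,\bm u)$ equals the same expression with the full-sample measure $\nu_{\bm u}^{\sss \Xc_{1:n}}$ replaced by the substretch measure $\nu_{\bm u}^{\sss \Xc_{\ip{ns}+1:\ip{nt}}}$. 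These right-hand sides are exactly the quantities controlled by~\eqref{eq:nonseq} and~\eqref{eq:seq} of Lemma~\ref{lem:stochastic}, taken with $\Xb_n = \hat{\Bb}_n^{\sss [b]}$ and $\Xb_n = \check{\Bb}_n^{\sss [b]}$, respectively.

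It therefore suffices to verify that $\hat{\Bb}_n^{\sss [b]}$ and $\check{\Bb}_n^{\sss [b]}$ meet the two structural hypotheses of Lemma~\ref{lem:stochastic}. The diagonal condition is immediate: by the conventions recorded just after~\eqref{eq:Bb:n:check}, both processes vanish whenever $\ip{nt} - \ip{ns} = 0$, hence in particular when $s = t$. For the weak-convergence hypothesis, I would invoke the asymptotics of the non-smooth sequential dependent multiplier bootstrap established in \cite{BucKoj16} for $\hat{\Bb}_n^{\sss [b]}$ and in \cite{BucKojRohSeg14} for $\check{\Bb}_n^{\sss [b]}$: Condition~\ref{cond:DGP} is designed precisely so that its i.i.d./(M0) branch and its strong-mixing branch (with rate $a > 3 + 3d/2$ and multipliers satisfying (M1)--(M3) with $\ell_n = O(n^{1/2-\gamma})$) match the assumptions needed there, giving $\hat{\Bb}_n^{\sss [b]} \leadsto \Bb_C$ and $\check{\Bb}_n^{\sss [b]} \leadsto \Bb_C$ in $\ell^\infty(\Lambda \times [0,1]^d)$, where $\Bb_C$ in~\eqref{eq:Bb:C} has almost surely continuous trajectories (inherited from the concentration of $\Zb_C$ on $\Cc([0,1]^{d+1})$ imposed in Condition~\ref{cond:Bn}). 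With these two facts in hand, and Condition~\ref{cond:var:W:weak} in force, \eqref{eq:nonseq} applied to $\hat{\Bb}_n^{\sss [b]}$ gives~\eqref{eq:ae:Bb:n:hat}, while \eqref{eq:seq} applied to $\check{\Bb}_n^{\sss [b]}$ gives~\eqref{eq:ae:Bb:n:check}.

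I expect the only genuine subtlety to be the appeal to the weak convergence of the multiplier processes. The dependent multiplier bootstrap is typically proved as a \emph{conditional} weak convergence given the data, whereas Lemma~\ref{lem:stochastic} consumes the ordinary unconditional convergence $\Xb_n \leadsto \Xb$ (its proof uses asymptotic equicontinuity of $\Xb_n$ to control the smoothing integral on a neighbourhood of $\bm u$). I would bridge this gap exactly as elsewhere in the paper, via the equivalence between the conditional and unconditional modes of convergence in Lemma~3.1 of \cite{BucKoj19}, which upgrades the conditional statements of \cite{BucKoj16} and \cite{BucKojRohSeg14} into the unconditional weak convergence (towards an independent copy of $\Bb_C$) required above. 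The smoothing step itself contributes nothing new beyond what Lemma~\ref{lem:stochastic} already packages.
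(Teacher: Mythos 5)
Your proposal is correct and follows essentially the same route as the paper's own proof: both reduce the two claims to \eqref{eq:nonseq} and \eqref{eq:seq} of Lemma~\ref{lem:stochastic} via the integral representations in~\eqref{eq:Bb:n:nu:hat} and~\eqref{eq:Bb:n:nu:check}, and both justify the weak-convergence hypothesis by the known asymptotics of the non-smooth multiplier processes (the paper cites Lemmas~D.1--D.2 and Theorem~2.1 of \cite{BucKoj16} for $\hat{\Bb}_n^{\sss [b]}$, display~(B.3) in the proof of Proposition~4.3 of \cite{BucKojRohSeg14} for $\check{\Bb}_n^{\sss [b]}$, and Theorem~1 of \cite{HolKojQue13} under Condition~\ref{cond:DGP}~(i)). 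The conditional-versus-unconditional subtlety you flag is handled even more directly in the paper, since the cited results are already stated as unconditional joint weak convergences, so the appeal to Lemma~3.1 of \cite{BucKoj19} is a harmless but unnecessary detour.
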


\begin{proof}
Fix $b \in \N$. We first prove \eqref{eq:ae:Bb:n:hat}. Starting from~\eqref{eq:Bb:n:nu:hat}, we have that 
\begin{multline*}
\sup_{(s,t,\bm u) \in \Lambda \times [0,1]^d} \left| \hat{\Bb}_n^{\sss [b],\nu} (s,t,\bm u)-\hat{\Bb}_n^{\sss [b]} (s,t,\bm u)\right| \\ = \sup_{(s,t,\bm u) \in \Lambda \times [0,1]^d} \left| \int_{[0,1]^d} \hat{\Bb}_n^{\sss [b]} (s,t,\bm w) \dd\nu_{\bm u}^{\sss \Xc_{1:n}}(\bm w)-\hat{\Bb}_n^{\sss [b]} (s,t,\bm u)\right|= o_\Pr(1),
\end{multline*}
where the last equality follows from~\eqref{eq:nonseq} in Lemma~\ref{lem:stochastic} since $\hat{\Bb}_n^{\sss [b]} \leadsto \Bb_C$ in $ \ell^\infty(\Lambda \times [0,1]^d)$, where $\Bb_C$ is defined in~\eqref{eq:Bb:C} and has continuous trajectories almost surely under Condition~\ref{cond:DGP}. Under Condition~\ref{cond:DGP}~(ii), the latter is a consequence of Lemmas~D.1 and~D.2 in \cite{BucKoj16} as well as Theorem~2.1 in \cite{BucKoj16}. Under Condition~\ref{cond:DGP}~(i), one can rely on Theorem~1 of \cite{HolKojQue13} instead.
  
The proof of~\eqref{eq:ae:Bb:n:check} is similar. Starting from~\eqref{eq:Bb:n:nu:check}, we have that
\begin{multline*}
\sup_{(s,t,\bm u) \in \Lambda \times [0,1]^d} \left| \check{\Bb}_n^{\sss [b],\nu} (s,t,\bm u)-\check{\Bb}_n^{\sss [b]} (s,t,\bm u)\right| \\ = \sup_{(s,t,\bm u) \in \Lambda \times [0,1]^d} \left| \int_{[0,1]^d} \check{\Bb}_n^{\sss [b]} (s,t,\bm w) \dd\nu_{\bm u}^{\sss \Xc_{\ip{ns}+1:\ip{nt}}}(\bm w) - \check{\Bb}_n^{\sss [b]} (s,t,\bm u)  \right| = o_\Pr(1),
\end{multline*}
where the last equality follows from~\eqref{eq:seq} in Lemma~\ref{lem:stochastic} since $\check{\Bb}_n^{\sss [b]} \leadsto \Bb_C$ in $ \ell^\infty(\Lambda \times [0,1]^d)$. Under Condition~\ref{cond:DGP}~(ii), the latter is a consequence of (B.3) in the proof of Proposition~4.3 in \citet{BucKojRohSeg14} and Theorem~2.1 in \citet{BucKoj16}. Under Condition~\ref{cond:DGP}~(i), one can rely again on Theorem~1 of \citet{HolKojQue13} instead.
\end{proof}

\begin{proof}[\bf Proof of Theorem~\ref{thm:ae:Cb:n:hat:check}]
Fix $b \in \N$. We only prove~\eqref{eq:ae:Cb:n:check}, the proof of~\eqref{eq:ae:Cb:n:hat} being simpler. Starting from~\eqref{eq:Cb:n:check} and~\eqref{eq:Cb:n:check:nu}, we have that
\begin{multline*}
\sup_{\substack{(s,t) \in \Lambda \\ \bm u \in [0,1]^d}} \left| \check{\Cb}_n^{\sss [b]} (s,t,\bm u) - \check{\Cb}_n^{\sss [b],\nu} (s,t,\bm u)\right| \leq  \sup_{\substack{(s,t) \in \Lambda \\ \bm u \in [0,1]^d}} \left| \check{\Bb}_n^{\sss [b]} (s,t,\bm u) - \check{\Bb}_n^{\sss [b],\nu} (s,t,\bm u) \right|\\
 +\sum_{j=1}^d \sup_{\substack{(s,t) \in \Lambda \\ \bm u \in [0,1]^d}} \left|\dot C_{j,\ip{ns}+1:\ip{nt}}(\bm u)\right| \sup_{\substack{(s,t) \in \Lambda \\ \bm u \in [0,1]^d}} \left|\check{\Bb}_n^{\sss [b]}(s,t,\bm u^{(j)})-\check{\Bb}_n^{\sss [b],\nu}(s,t,\bm u^{(j)}) \right|.
\end{multline*}
The terms on the right-hand side of the previous display converge to zero in probability as a consequence of~\eqref{eq:ae:Bb:n:check} in Lemma~\ref{lem:Bb:n:nu} and the fact that $\sup_{(s,t,\bm u) \in \Lambda \times [0,1]^d}  \left| \dot C_{j,\ip{ns}+1:\ip{nt}}(\bm u) \right| \leq \zeta$ from Condition~\ref{cond:pd:est}.

The last two claims of the theorem are an immediate consequence of~\eqref{eq:ae:Cb:n:hat}, \eqref{eq:ae:Cb:n:check} and straightforward extensions of Propositions~4.2 and~4.3 in \cite{BucKojRohSeg14} for non-smooth multiplier replicates based on arbitrary partial derivative estimators satisfying Condition~\ref{cond:pd:est}. 
\end{proof}


\section{Proofs of Propositions~\ref{prop:Bern:pd:comp}, \ref{prop:wc:nu:Delta}, \ref{prop:wc:Delta:nu:trunc} and~\ref{prop:wc:Bern}}
\label{app:pd}

\begin{lem}
 \label{lem:Ber:pd}
 Let $f$ be any function from $[0,1]^d$ to $[0,1]$, let $m \in \N$, $m \geq 2$ and recall that, for any $\bm u \in [0,1]^d$, $\mu_{m,\bm u}$ is the law of the random vector $(S_{m,1,u_1}/m,\dots, S_{m,d,u_d}/m)$, where $S_{m,1,u_1},\dots S_{m,d,u_d}$ are independent, and for each $k \in \{1,\dots,d\}$, $S_{m,k,u_k}$ is Binomial$(m,u_k)$. Moreover, recall that, for any $j \in \{1,\dots,d\}$, $\tilde \mu_{j,m,\bm u}$ is the law of the random vector $(\tilde S_{m,1,u_1}/m,\dots, \tilde S_{m,d,u_d}/m)$ whose components are independent and, for $i \in \{1,\dots,d\} \setminus \{j\}$, $\tilde S_{m,i,u_i}$ is Binomial$(m,u_i)$, whereas $\tilde S_{m,j,u_j}$ is Binomial$(m-1,u_j)$. Then, for any $j \in \{1,\dots,d\}$ and $\bm u \in [0,1]^d$ such that $u_j \in (0,1)$, 
\begin{align*}
\partial_{u_j} \left\{\int_{[0,1]^d} f(\bm w) \dd \mu_{m,\bm u}(\bm w) \right\} = m \int_{[0,1]^d} \left\{ f(\bm w + \bm e_j/m) - f(\bm w)\right\} \dd \tilde \mu_{j,m,\bm u}(\bm w).
\end{align*}
\end{lem}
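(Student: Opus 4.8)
The plan is to reduce the identity to the elementary derivative formula for univariate Bernstein basis polynomials and then to match the two sides term by term. Writing $\pi_{m,k}(v) = \binom{m}{k} v^k (1-v)^{m-k}$ for the $k$th Bernstein basis polynomial of degree $m$, the measure $\mu_{m,\bm u}$ assigns mass $\prod_{i=1}^d \pi_{m,k_i}(u_i)$ to the atom $\bm k/m$ with $\bm k = (k_1,\dots,k_d) \in \{0,\dots,m\}^d$, so that
\begin{equation*}
\int_{[0,1]^d} f(\bm w)\,\dd\mu_{m,\bm u}(\bm w) = \sum_{\bm k \in \{0,\dots,m\}^d} f(\bm k/m) \prod_{i=1}^d \pi_{m,k_i}(u_i).
\end{equation*}
This is a polynomial in $u_j$, so its partial derivative is obtained by differentiating termwise; since only the factor $\pi_{m,k_j}(u_j)$ depends on $u_j$, the first step is to compute $\pi_{m,k_j}'(u_j)$.

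The key computation is the standard identity $\pi_{m,k}'(v) = m\{\pi_{m-1,k-1}(v) - \pi_{m-1,k}(v)\}$, valid for all $v \in [0,1]$ with the conventions $\pi_{m-1,-1} \equiv \pi_{m-1,m} \equiv 0$; it follows at once from $k\binom{m}{k} = m\binom{m-1}{k-1}$ and $(m-k)\binom{m}{k} = m\binom{m-1}{k}$. Substituting this into the differentiated sum would give
\begin{equation*}
\partial_{u_j}\left\{\int_{[0,1]^d} f(\bm w)\,\dd\mu_{m,\bm u}(\bm w)\right\} = m \sum_{\bm k} f(\bm k/m) \Bigl(\prod_{i\neq j}\pi_{m,k_i}(u_i)\Bigr)\{\pi_{m-1,k_j-1}(u_j) - \pi_{m-1,k_j}(u_j)\}.
\end{equation*}

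For the right-hand side, I would observe that $\tilde\mu_{j,m,\bm u}$ puts mass $\pi_{m-1,k_j}(u_j)\prod_{i\neq j}\pi_{m,k_i}(u_i)$ on $\bm k/m$ for $k_j \in \{0,\dots,m-1\}$, so that $\int f\,\dd\tilde\mu_{j,m,\bm u}$ reproduces the $-\pi_{m-1,k_j}$ part of the sum above, while the shift $\bm w \mapsto \bm w + \bm e_j/m$ in $\int f(\bm w + \bm e_j/m)\,\dd\tilde\mu_{j,m,\bm u}$ corresponds, after reindexing $k_j \mapsto k_j-1$, to the $+\pi_{m-1,k_j-1}$ part. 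The two representations then coincide once one notes that the boundary terms $k_j = 0$ and $k_j = m$ contribute nothing because of the conventions $\pi_{m-1,-1} \equiv \pi_{m-1,m} \equiv 0$, which is exactly what lets the index ranges $\{1,\dots,m\}$ (from the shifted integral) and $\{0,\dots,m-1\}$ be merged into the common range $\{0,\dots,m\}$.

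The argument is essentially a bookkeeping exercise, so the main obstacle is the careful handling of the coordinate shift and of the boundary atoms: I would have to check that extending both partial sums to the common range $\{0,\dots,m\}^d$ introduces no spurious terms, and that the reindexing of the shifted integral leaves the degree-$m$ factors in the coordinates $i \neq j$ untouched. No analytic subtlety arises from the restriction $u_j \in (0,1)$, since the identity is between polynomials and in fact holds on all of $[0,1]$; the restriction merely reflects the domain on which $\dot C_j$ is considered elsewhere in the paper.
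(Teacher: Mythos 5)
Your proposal is correct and follows essentially the same route as the paper's proof: termwise differentiation of the polynomial representation of $\int f \,\dd\mu_{m,\bm u}$, the Bernstein-basis derivative identity (which the paper derives inline via the same binomial-coefficient manipulations $k\binom{m}{k}=m\binom{m-1}{k-1}$ and $(m-k)\binom{m}{k}=m\binom{m-1}{k}$), and then an index shift with cancellation of the boundary atoms $k_j=0$ and $k_j=m$ to identify the result with the two integrals against $\tilde\mu_{j,m,\bm u}$.
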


\begin{proof}
Fix $m \geq 2$ and, without loss of generality, fix $j=1$. Also, for any $u \in [0,1]$, let $b_{m, u}(s)={m \choose s} u^{s}(1-u)^{m-s}$, $s \in \{0,\dots,m\}$. Then, for all $\bm u \in [0,1]^d$ such that $u_1 \in (0,1)$, 
{\small \begin{align*}
  &\partial_{u_1} \left\{\int_{[0,1]^d} f(\bm w) \dd \mu_{m,\bm u}(\bm w) \right\} = \partial_{u_1} \left\{ \sum_{s_1=0}^m \cdots \sum_{s_d=0}^m f{\left(\frac{s_1}{m}, \dots, \frac{s_d}{m} \right)} \prod_{j=1}^d b_{m, u_j}(s_j) \right\}\\
          =& \sum_{s_1=0}^m \cdots \sum_{s_d=0}^m f{\left(\frac{s_1}{m}, \dots, \frac{s_d}{m} \right)} \partial_{u_1} b_{m, u_1}(s_1) \prod_{j=2}^d b_{m, u_j}(s_j)\\
  =& \sum_{s_1=0}^m \cdots \sum_{s_d=0}^m f{\left(\frac{s_1}{m}, \dots, \frac{s_d}{m} \right)} \\
                &\times  {m \choose s_1} \left\{ s_1 u_1^{s_1-1}(1-u_1)^{m-s_1} -  (m-s_1) u_1^{s_1}(1-u_1)^{m-s_1-1}\right\}  \prod_{j=2}^d b_{m, u_j}(s_j) \\
  =&\; m \sum_{s_1=1}^m \cdots \sum_{s_d=0}^m f{\left(\frac{s_1}{m}, \dots, \frac{s_d}{m} \right)} \frac{(m-1)!}{(s_1-1)! (m-s_1)!}u_1^{s_1-1}(1-u_1)^{m-s_1}  \prod_{j=2}^d b_{m, u_j}(s_j) \\
                &- m \sum_{s_1=0}^{m-1} \cdots \sum_{s_d=0}^m f{\left(\frac{s_1}{m}, \dots, \frac{s_d}{m} \right)} \frac{(m-1)!}{s_1 ! (m-s_1-1)!} u_1^{s_1}(1-u_1)^{m-s_1-1}\prod_{j=2}^d b_{m, u_j}(s_j)\\
  =&\; m \sum_{s_1=0}^{m-1} \cdots \sum_{s_d=0}^m f{\left(\frac{s_1+1}{m}, \dots, \frac{s_d}{m} \right)} \frac{(m-1)!}{s_1! (m-s_1-1)!} u_1^{s_1}(1-u_1)^{m-s_1-1} \prod_{j=2}^d b_{m, u_j}(s_j) \\
                &- m \sum_{s_1=0}^{m-1} \cdots \sum_{s_d=0}^m f{\left(\frac{s_1}{m}, \dots, \frac{s_d}{m} \right)} \frac{(m-1)!}{s_1 ! (m-s_1-1)!} u_1^{s_1}(1-u_1)^{m-s_1-1} \prod_{j=2}^d b_{m, u_j}(s_j) \\
  =& \;m\sum_{s_1=0}^{m-1} \cdots \sum_{s_d=0}^m \left\{ f \left(\frac{s_1+1}{m}, \dots, \frac{s_d}{m} \right) - f\left(\frac{s_1}{m}, \dots, \frac{s_d}{m} \right)\right\} b_{m-1, u_1}(s_1)\prod_{j=2}^d b_{m, u_j}(s_j)\\
  =& \;m \int_{[0,1]^d} \left\{ f(\bm w + \bm e_1/m) - f(\bm w)\right\} \dd \tilde \mu_{1,m,\bm u}(\bm w).
\end{align*}}
\end{proof}

\begin{proof}[\bf Proof of Proposition~\ref{prop:Bern:pd:comp}]
  Fix $j \in \{1,\dots,d\}$, $\bm u \in [0,1]^d$ and $m \geq 2$, and recall the definition of the measure $\tilde \mu_{j,m,\bm u}$ given in Lemma~\ref{lem:Ber:pd}. From \eqref{eq:Bern:pd}, we have that
\begin{equation}
  \label{eq:Bern:pd:split}
  \dot C_{j,k:l,m}^{\sss \Bern}(\bm u) = m \int_{[0,1]^d} C_{k:l}(\bm w + \bm e_j/m) \dd \tilde \mu_{j,m,\bm u}(\bm w) - m \int_{[0,1]^d} C_{k:l}(\bm w)\dd \tilde \mu_{j,m,\bm u}(\bm w).
\end{equation}
Let $\bm{\tilde S} = (\tilde S_{m,1,u_1},\dots, \tilde S_{m,d,u_d})$ so that $\bm{\tilde S}/m$ is a random vector with law $\tilde \mu_{j,m,\bm u}$. Then, combined with the definition of $C_{k:l}$ in \eqref{eq:C:kl}, the first integral on the right-hand side of~\eqref{eq:Bern:pd:split} can be rewritten as
\begin{align*}
  \int_{[0,1]^d} &\frac{1}{l-k+1} \sum_{i=k}^l \1 \left\{\bm R_i^{k:l} / (l-k+1) \leq \bm w + \bm e_j/m \right\}\dd \tilde \mu_{j,m,\bm u}(\bm w)\\
                 =&  \frac{1}{l-k+1} \sum_{i=k}^l \int_{[0,1]^d} \1 \left\{\bm R_i^{k:l} / (l-k+1) - \bm e_j/m \leq \bm w  \right\}\dd \tilde \mu_{j,m,\bm u}(\bm w)\\
                 =& \frac{1}{l-k+1} \sum_{i=k}^l \Pr \left\{\bm{\tilde S} \geq m\bm R_i^{k:l} / (l-k+1) - \bm e_j \mid \Xc_{k:l} \right\}\\
  =& \frac{1}{l-k+1} \sum_{i=k}^l \Pr\left\{\tilde S_{m, j, u_j} \geq m R_{ij}^{k:l} / (l-k+1) - 1 \mid \Xc_{k:l} \right\} \\
                 &\times \prod_{t=1 \atop t \neq j}^d \Pr\left\{\tilde S_{m, t, u_t} \geq m R_{it}^{k:l} / (l-k+1) \mid \Xc_{k:l} \right\}  \\
                 =& \frac{1}{l-k+1} \sum_{i=k}^l \Pr\left\{\tilde S_{m, j, u_j} > \up{m R_{ij}^{k:l} / (l-k+1) - 1} - 1 \mid \Xc_{k:l} \right\} \\ &\times \prod_{t=1 \atop t \neq j}^d \Pr\left\{\tilde S_{m, t, u_t} > \up{m R_{it}^{k:l} / (l-k+1)} - 1 \mid \Xc_{k:l}\right\}  \\
                 =& \frac{1}{l-k+1} \sum_{i=k}^l \bar B_{m-1, u_j}\left\{\up{mR_{ij}^{k:l}/(l-k+1)} - 2\right\} \\ &\times \prod_{t=1 \atop t \neq j}^d \bar B_{m, u_t}\left\{\up{mR_{it}^{k:l}/(l-k+1)} - 1\right\},
\end{align*}
where we have used the fact that, for any $t \in \{1,\dots,d\}$ and $x \in \R$, $\Pr(\tilde S_{m, t, u_t} \geq x) = \Pr(\tilde S_{m, t, u_t} > \up{x}-1)$ and $\up{x-1} = \up{x} - 1$. Similarly, for the second integral on the right-hand side of~\eqref{eq:Bern:pd:split}, we have
\begin{align*}
  \int_{[0,1]^d} &C_{k:l}(\bm w)\dd \tilde \mu_{j,m,\bm u}(\bm w) \\
  =& \frac{1}{l-k+1} \int_{[0,1]^d} \sum_{i=k}^l \1 \left\{\bm R_i^{k:l} / (l-k+1) \leq \bm w \right\} \dd \tilde \mu_{j,m,\bm u}(\bm w) \\
  =& \frac{1}{l-k+1} \sum_{i=k}^l \Pr \left\{ \bm{\tilde S} \geq m \bm R_i^{k:l} / (l-k+1) \mid \Xc_{k:l} \right\} \\
  =& \frac{1}{l-k+1} \sum_{i=k}^l \prod_{t=1}^d \Pr\left\{ \tilde S_{m, t, u_t} \geq m R_{it}^{k:l} / (l-k+1) \mid \Xc_{k:l} \right\} \\
  =& \frac{1}{l-k+1} \sum_{i=k}^l \bar B_{m-1, u_j}\left\{\up{mR_{ij}^{k:l}/(l-k+1)} - 1\right\} \\ &\times \prod_{t=1 \atop t \neq j}^d \bar B_{m, u_t}\left\{\up{mR_{it}^{k:l}/(l-k+1)} - 1\right\}.
\end{align*}
The desired result finally follows by noticing that
\begin{align*}
  &\bar B_{m-1, u_j} \left\{\up{mR_{ij}^{k:l}/(l-k+1)} - 2\right\} - \bar B_{m-1, u_j}\left\{\up{mR_{ij}^{k:l}/(l-k+1)} - 1\right\}  \\
                    &= B_{m-1, u_j}\left\{\up{mR_{ij}^{k:l}/(l-k+1)} - 1\right\}  - B_{m-1, u_j} \left\{\up{mR_{ij}^{k:l}/(l-k+1)} - 2\right\} \\
                    &= b_{m-1, u_j}\left\{\up{mR_{ij}^{k:l}/(l-k+1)} - 1\right\}.
\end{align*}
\end{proof}

\begin{lem}
  \label{lem:ae:nabla:Delta}
  Under Condition~\ref{cond:band}, for any $j \in \{1,\dots,d\}$, $\delta \in (0,1)$ and $\eps \in (0,1/2)$, with probability~1,
\begin{align*}
 \sup_{\substack{(s,t) \in \Lambda \\ t-s \geq \delta}} \sup_{\substack{\bm u \in [0,1]^d\\ u_j \in [\eps, 1-\eps]}}  \left| \dot C_{j,\ip{ns}+1:\ip{nt}}^{\sss \nu,\nabla}(\bm u) - \dot C_{j,\ip{ns}+1:\ip{nt}}^{\sss \nu,\Delta}(\bm u) \right| =o(1).
\end{align*}
\end{lem}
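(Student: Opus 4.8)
The plan is to exploit the fact that the two estimators $\dot C_{j,k:l,h,h'}^{\sss \nu,\nabla}$ in~\eqref{eq:pd:est:nu:nabla} and $\dot C_{j,k:l,h,h'}^{\sss \nu,\Delta}$ in~\eqref{eq:pd:est:nu:Delta} share \emph{exactly the same numerator}, namely $C_{k:l}^\nu\{(\bm u + h\bm e_j)\wedge \bm 1\} - C_{k:l}^\nu\{(\bm u - h'\bm e_j)\vee \bm 0\}$, and differ only through their denominators $h+h'$ and $(u_j+h)\wedge 1 - (u_j - h')\vee 0$. I would therefore show that, on the region under consideration, these two denominators actually coincide once $n$ is large enough, so that the supremum in the statement is not merely $o(1)$ but identically zero for large $n$.

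First I would argue that the bandwidths are uniformly small over $\{(s,t)\in\Lambda : t-s\ge\delta\}$. Writing $m_n = \ip{nt}-\ip{ns}$ for the length of the substretch $\Xc_{\ip{ns}+1:\ip{nt}}$, one checks that $t-s\ge\delta$ forces $m_n \ge \ip{n\delta}$. The relevant bandwidths are $h = h(\Xc_{\ip{ns}+1:\ip{nt}})$ and $h' = h'(\Xc_{\ip{ns}+1:\ip{nt}})$, so Condition~\ref{cond:band} supplies the \emph{deterministic} bounds $h \le (L_2 b_{m_n})\wedge 1/2$ and $h'\le (L_2 b'_{m_n})\wedge 1/2$. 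Since $b_n\downarrow 0$ and $b_n'\downarrow 0$ are monotone and $m_n\ge\ip{n\delta}$, we get $h \le L_2 b_{\ip{n\delta}}$ and $h'\le L_2 b'_{\ip{n\delta}}$ uniformly over $\{t-s\ge\delta\}$, and both upper bounds tend to $0$ as $n\to\infty$. Hence there is an integer $N$, depending only on $\delta$ and $\eps$, such that $h<\eps$ and $h'<\eps$ for all $n\ge N$, uniformly over the region.

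Then I would conclude. On $\{u_j\in[\eps,1-\eps]\}$ with $h,h'<\eps$ we have $u_j+h < (1-\eps)+\eps = 1$ and $u_j-h' > \eps-\eps = 0$, so both truncations in the $\Delta$-denominator are inactive: $(u_j+h)\wedge 1 = u_j+h$ and $(u_j-h')\vee 0 = u_j-h'$, whence the denominator reduces to $(u_j+h)-(u_j-h') = h+h'$. Thus $\dot C_{j,\ip{ns}+1:\ip{nt}}^{\sss \nu,\Delta}(\bm u) = \dot C_{j,\ip{ns}+1:\ip{nt}}^{\sss \nu,\nabla}(\bm u)$ for every such $(s,t,\bm u)$ whenever $n\ge N$, so the supremum in the statement equals $0$ for all $n\ge N$ and the claim follows. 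Since every bound invoked is deterministic, the conclusion in fact holds surely, and \emph{a fortiori} with probability~1.

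The argument involves no genuine analytic obstacle; the only point requiring care is the uniformity in $(s,t)$. Because the bandwidths depend on the substretch through its length, one must convert the constraint $t-s\ge\delta$ into the length bound $m_n\ge\ip{n\delta}$ and then use the \emph{monotonicity} of $b_n$ and $b_n'$ to pass from $b_{m_n}$ to the single controlling sequence $b_{\ip{n\delta}}$. This is precisely what makes the threshold $N$ depend only on $\delta$ and $\eps$, and not on the particular $(s,t)$.
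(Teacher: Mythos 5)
Your proof is correct and is essentially the paper's own argument: for $n$ large enough, Condition~\ref{cond:band} makes $h$ and $h'$ uniformly smaller than $\eps$ over $\{t-s\ge\delta\}$, so the truncations in the $\Delta$-denominator are inactive on $\{u_j\in[\eps,1-\eps]\}$, the two denominators coincide, and the supremum is identically zero (the paper phrases this as the difference of reciprocals of the denominators vanishing, after bounding the common numerator by $1$). Your explicit reduction of the constraint $t-s\ge\delta$ to the length bound $\ip{nt}-\ip{ns}\ge\ip{n\delta}$ together with monotonicity of $b_n,b_n'$ is exactly the uniformity detail the paper leaves implicit.
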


\begin{proof}
Fix $j \in \{1,\dots,d\}$, $\delta \in (0,1)$ as well as $\eps \in (0,1/2)$ and assume that $n$ is large enough so that, for any $(s,t) \in \Lambda$ such that $t-s > \delta$, $L_2 b_{\ip{nt}-\ip{ns}}$ and $L_2 b'_{\ip{nt}-\ip{ns}}$ are smaller than~$\eps$. Then, using the fact that $C_{k:l}^\nu$ in~\eqref{eq:C:kl:nu} is between 0 and 1, we obtain that, with probability~1,
  \begin{multline*}
    \sup_{\substack{(s,t) \in \Lambda \\ t-s \geq \delta}}\sup_{\substack{\bm u \in [0,1]^d\\ u_j \in [\eps, 1-\eps]}}  \left| \dot C_{j,\ip{ns}+1:\ip{nt}}^{\sss \nu,\nabla}(\bm u) - \dot C_{j,\ip{ns}+1:\ip{nt}}^{\sss \nu,\Delta}(\bm u) \right|  
     \\ \leq \sup_{\substack{(s,t) \in \Lambda \\ t-s \geq \delta}} \sup_{\substack{\bm u \in [0,1]^d\\ u_j \in [\eps, 1-\eps]}} \left| \frac{1}{h+h'} - \frac{1}{(u_j+h) \wedge 1 - (u_j - h') \vee  0} \right| = 0.   \\
  \end{multline*}
\end{proof}

\begin{proof}[\bf Proof of Proposition~\ref{prop:wc:nu:Delta}]
Fix $j \in \{1,\dots,d\}$ and let us first prove~\eqref{eq:wc:nu:Delta} by proceeding along the lines of the proof of (B.4) in \cite{BucKojRohSeg14}. From~\eqref{eq:Cb:n:nu}, notice that, for any $(s,t,\bm u) \in \Lambda \times [0,1]^d$ such that $\ip{ns} < \ip{nt}$,
$$
C_{\ip{ns}+1:\ip{nt}}^\nu(\bm u) = C(\bm u) + \frac{1}{\sqrt{n}\lambda_n(s,t)}\Cb_n^\nu(s,t,\bm u).
$$
Fix $\delta \in (0,1)$ and notice that, by Condition~\ref{cond:band},
\begin{equation}
  \label{eq:dn}
  d_n = \sup_{\substack{(s,t) \in \Lambda \\ t-s \geq \delta}} (b_{\ip{nt}-\ip{ns}} + b'_{\ip{nt}-\ip{ns}}) \leq \sup_{k \geq \ip{n\delta}-1} (b_k + b'_k) \to 0.
\end{equation}
Next, fix $\eps \in (0,1/2)$ and assume that $n$ is large enough so that, for any $t-s > \delta$, $L_2 b_{\ip{nt}-\ip{ns}}$ and $L_2 b'_{\ip{nt}-\ip{ns}}$ are smaller than $\eps/2$. Then, for any $t-s > \delta$ and $\bm u \in [0,1]^d$ such that $u_j \in [\eps, 1-\eps]$,
\begin{multline}
  \label{eq:pd:1}
  \dot C_{j,\ip{ns}+1:\ip{nt}}^{\sss \nu,\Delta}(\bm u) = \frac{1}{h+h'}\left\{C(\bm u + h \bm e_j) - C(\bm u - h' \bm e_j) \right\}
  \\ + \frac{1}{(h+h')\sqrt{n}\lambda_n(s,t)} \left\{\Cb_n^\nu(s,t,\bm u + h \bm e_j) - \Cb_n^\nu(s,t,\bm u - h' \bm e_j) \right\}.
\end{multline}
Since, by Condition~\ref{cond:pd}, $\dot C_j$ exists (and is continuous) on the set $\{\bm u \in [0,1]^d : u_j \in [\eps/2, 1-\eps/2]\}$, from the mean value theorem, for any $t-s > \delta$ and $\bm u \in [0,1]^d$ such that $u_j \in [\eps, 1-\eps]$,
$$
\frac{1}{h+h'}\left\{C(\bm u + h \bm e_j) - C(\bm u - h' \bm e_j) \right\} = \dot C_j(\bm u^*_{n,s,t}),
$$
where $\bm u^*_{n,s,t}$ is between $\bm u - h' \bm e_j$ and $\bm u + h \bm e_j$ almost surely. Hence, with probability~1,
\begin{multline}
  \label{eq:pd:2}
  \sup_{\substack{(s,t) \in \Lambda \\ t-s \geq \delta}} \sup_{\substack{\bm u \in [0,1]^d\\ u_j \in [\eps, 1-\eps]}} \left|\frac{1}{h+h'}\left\{C(\bm u + h \bm e_j) - C(\bm u - h' \bm e_j) \right\} - \dot C_j(\bm u) \right|\\
  = \sup_{\substack{(s,t) \in \Lambda \\ t-s \geq \delta}} \sup_{\substack{\bm u \in [0,1]^d\\ u_j \in [\eps, 1-\eps]}} \left| \dot C_j(\bm u^*_{n,s,t})- \dot C_j(\bm u) \right| \leq  \sup_{\substack{(\bm u,\bm v) \in [0,1]^{2d} \\ u_j, v_j \in [\eps/2,1-\eps/2] \\ |\bm u - \bm v|_\infty \leq L_2 d_n }}  \left|\dot C_j(\bm u) - \dot C_j(\bm v) \right| \to 0,
\end{multline}
where $d_n$ is defined in~\eqref{eq:dn}. Furthermore, since, by Condition~\ref{cond:var:W} and as a result of Theorem~\ref{thm:Cb:n:nu}, $\Cb_n^\nu$ is asymptotically uniformly equicontinuous in probability, we have that
\begin{align}
\nonumber
\sup_{\substack{(s,t) \in \Lambda \\ t-s \geq \delta}} &\sup_{\substack{\bm u \in [0,1]^d\\ u_j \in [\eps, 1-\eps]}}  \left|\Cb_n^\nu(s,t,\bm u + h \bm e_j) - \Cb_n^\nu(s,t,\bm u - h' \bm e_j) \right|\\
\label{eq:pd:3}
&\leq \sup_{\substack{(s,t) \in \Lambda \\ t-s \geq \delta}} \sup_{\substack{(\bm u,\bm v) \in [0,1]^{2d} \\ u_j, v_j \in [\eps/2,1-\eps/2] \\ |\bm u - \bm v|_\infty \leq L_2 d_n }}  \left|\Cb_n^\nu(s,t,\bm u) - \Cb_n^\nu(s,t,\bm v) \right| = o_\Pr(1).
\end{align}
The fact that~\eqref{eq:wc:nu:Delta} holds is then an immediate consequence of \eqref{eq:pd:1}, \eqref{eq:pd:2}, \eqref{eq:pd:3} and the fact that, from Condition~\ref{cond:band},
\begin{align*}
  \sup_{\substack{(s,t) \in \Lambda \\ t-s \geq \delta}} \frac{1}{\left(h+h'\right)\sqrt{n}\lambda_n(s,t)} &\leq \sup_{\substack{(s,t) \in \Lambda \\ t-s \geq \delta}} \frac{1}{L_1 (b_{\ip{nt}- \ip{ns}} + b_{\ip{nt}- \ip{ns}}') \sqrt{n}\lambda_n(s,t)} \\ 
                                                                                                         &\leq \sup_{\substack{(s,t) \in \Lambda \\ t-s \geq \delta}} \frac{1}{L_1 (\ip{nt}-\ip{ns})^{-1/2}\sqrt{n}\lambda_n(s,t)} \\
                                                                                                         &= \sup_{\substack{(s,t) \in \Lambda \\ t-s \geq \delta}}\frac{1}{L_1 \sqrt{\lambda_n(s,t)}} \leq \frac{1}{L_1 \sqrt{\delta-1/n}}.
\end{align*}
The claim for $\dot C_{j,k:l}^{\sss \nu,\nabla}$ (resp.\ for $\dotu{C}_{j,k:l}^{\sss \nu,\Delta}$ and $\dotu{C}_{j,k:l}^{\sss \nu,\nabla}$) follows from Lemma~\ref{lem:ae:nabla:Delta} (resp.\ the continuous mapping theorem).
\end{proof}

\begin{proof}[\bf Proof of Proposition~\ref{prop:wc:Delta:nu:trunc}]
Fix $j \in \{1,\dots,d\}$, $\delta \in (0,1)$ and $\eps \in (0,1/2)$. We first prove~\eqref{eq:wc:Delta:nu:trunc}. From~\eqref{eq:pd:est:Delta:nu:trunc} and the triangle inequality, we have that
\begin{align*}
\sup_{\substack{(s,t) \in \Lambda \\ t-s \geq \delta}}& \sup_{\substack{\bm u \in [0,1]^d\\ u_j \in [\eps, 1-\eps]}}  \left| \dotu{C}_{j,\ip{ns}+1:\ip{nt}}^{\sss \Delta,\nu}(\bm u) - \dot C_j(\bm u) \right| \leq \; I_{j,n,\delta,\eps} +J_{j,n,\delta,\eps},
\end{align*}
where
{\small \begin{align*}
I_{j,n,\delta,\eps} &= \sup_{\substack{(s,t) \in \Lambda \\ t-s \geq \delta}} \sup_{\substack{\bm u \in [0,1]^d\\ u_j \in [\eps, 1-\eps]}}  \left| \int_{[0,1]^d}\left\{ \dotu{C}_{j,\ip{ns}+1:\ip{nt}}^{\sss \Delta}(\bm{w}) - \dot C_j(\bm w)\right\} \dd \nu_{\bm u}^{\sss \Xc_{\ip{ns}+1:\ip{nt}}}(\bm w) \right|,\\
  J_{j,n,\delta,\eps} &= \sup_{\substack{(s,t) \in \Lambda \\ t-s \geq \delta}} \sup_{\substack{\bm u \in [0,1]^d\\ u_j \in [\eps, 1-\eps]}}  \left|  \int_{[0,1]^d} \dot C_j(\bm w) \dd \nu_{\bm u}^{\sss \Xc_{\ip{ns}+1:\ip{nt}}}(\bm w) -  \dot C_j(\bm u)\right|,
\end{align*}}
where $\dotu{C}_{j,k:l}^{\sss \Delta}$ is defined in~\eqref{eq:pd:est:Delta:trunc}. We shall now show that both $I_{j,n,\delta,\eps} = o_\Pr(1)$  and $J_{j,n,\delta,\eps} = o_\Pr(1)$.

\emph{Term $I_{j,n,\delta,\eps}$:} From the triangle inequality and the fact that $0 \leq \dotu{C}_{j,k:l}^{\sss \Delta} \leq 1$ and $0 \leq \dot{C}_j \leq 1$, we have that $I_{j,n,\delta,\eps}$ is smaller than
{\small \begin{align*}
  & \sup_{\substack{(s,t) \in \Lambda \\ t-s \geq \delta}} \sup_{\substack{\bm u \in [0,1]^d\\ u_j \in [\eps, 1-\eps]}}  \left| \int_{\substack{\{\bm w \in [0,1]^d : \\ w_j \in [\eps/2, 1-\eps/2]\}}}\left\{ \dotu{C}_{j,\ip{ns}+1:\ip{nt}}^{\sss \Delta}(\bm{w}) - \dot C_j(\bm w)\right\} \dd \nu_{\bm u}^{\sss \Xc_{\ip{ns}+1:\ip{nt}}}(\bm w) \right|\\
                      & + \sup_{\substack{(s,t) \in \Lambda \\ t-s \geq \delta}} \sup_{\substack{\bm u \in [0,1]^d\\ u_j \in [\eps, 1-\eps]}}  \left| \int_{\substack{\{\bm w \in [0,1]^d : \\ w_j < \eps/2\}}} \left\{ \dotu{C}_{j,\ip{ns}+1:\ip{nt}}^{\sss \Delta}(\bm{w}) - \dot C_j(\bm w)\right\} \dd \nu_{\bm u}^{\sss \Xc_{\ip{ns}+1:\ip{nt}}}(\bm w) \right|\\
                      & + \sup_{\substack{(s,t) \in \Lambda \\ t-s \geq \delta}} \sup_{\substack{\bm u \in [0,1]^d\\ u_j \in [\eps, 1-\eps]}}  \left| \int_{\substack{\{\bm w \in [0,1]^d :\\ w_j > 1-\eps/2\}}}\left\{ \dotu{C}_{j,\ip{ns}+1:\ip{nt}}^{\sss \Delta}(\bm{w}) - \dot C_j(\bm w)\right\} \dd \nu_{\bm u}^{\sss \Xc_{\ip{ns}+1:\ip{nt}}}(\bm w) \right|\\
                   &\leq \; I_{j,n,\delta,\eps}' + I_{j,n,\delta,\eps}'' + I_{j,n,\delta,\eps}''',
\end{align*}}
where
\begin{align*}
I_{j,n,\delta,\eps}' =&  \sup_{\substack{(s,t) \in \Lambda \\ t-s \geq \delta}} \sup_{\substack{\bm w \in [0,1]^d\\ w_j \in [\eps/2, 1-\eps/2]}}  \left| \dotu{C}_{j,\ip{ns}+1:\ip{nt}}^{\sss \Delta}(\bm w) - \dot C_j(\bm w) \right|,\\
I_{j,n,\delta,\eps}'' =& \sup_{\substack{(s,t) \in \Lambda \\ t-s \geq \delta}} \sup_{\substack{\bm u \in [0,1]^d\\ u_j \in [\eps, 1-\eps]}} \nu_{\bm u}^{\sss \Xc_{\ip{ns}+1:\ip{nt}}}\left\{\bm w \in [0,1]^d:w_j < \eps/2  \right\},\\
I_{j,n,\delta,\eps}''' =& \sup_{\substack{(s,t) \in \Lambda \\ t-s \geq \delta}} \sup_{\substack{\bm u \in [0,1]^d\\ u_j \in [\eps, 1-\eps]}} \nu_{\bm u}^{\sss \Xc_{\ip{ns}+1:\ip{nt}}}\left\{\bm w \in [0,1]^d:w_j > 1 - \eps/2  \right\}.
\end{align*}
We have that $I_{j,n,\delta,\eps}' = o_\Pr(1)$ as a consequence of Corollary~\ref{cor:wc:Delta}. We shall now show that both $I_{j,n,\delta,\eps}''$ and $I_{j,n,\delta,\eps}'''$ converge almost surely to zero. To do so, it suffices to show that $I_{j,n,\delta,\eps}''$ and $I_{j,n,\delta,\eps}'''$ converge to zero conditionally on $\bm X_1,\bm X_2,\dots$ for almost any sequence $\bm X_1,\bm X_2,\dots$. Concerning $I_{j,n,\delta,\eps}'' $, using Chebyshev's inequality and Condition~\ref{cond:var:W:weak}, for almost any sequence $\bm X_1, \bm X_2, \dots$, conditionally on $\bm X_1, \bm X_2, \dots$, we obtain that
{\small \begin{align*}
  I_{j,n,\delta,\eps}'' &= \sup_{\substack{(s,t) \in \Lambda \\ t-s \geq \delta}} \sup_{u_j \in [\eps, 1-\eps]} \Pr \left\{W_{j,u_j}^{\sss \Xc_{\ip{ns}+1:\ip{nt}}} < \eps/2 \mid \Xc_{\ip{ns}+1:\ip{nt}} \right\}  \\
                                                         &= \sup_{\substack{(s,t) \in \Lambda \\ t-s \geq \delta}} \sup_{u_j \in [\eps, 1-\eps]} \Pr \left\{W_{j,u_j}^{\sss \Xc_{\ip{ns}+1:\ip{nt}}} - u_j < \eps/2 - u_j \mid \Xc_{\ip{ns}+1:\ip{nt}} \right\} \\
                                                         &\leq \sup_{\substack{(s,t) \in \Lambda \\ t-s \geq \delta}} \sup_{u_j \in [\eps, 1-\eps]} \Pr \left\{ - \left| W_{j,u_j}^{\sss \Xc_{\ip{ns}+1:\ip{nt}}} - u_j \right| \leq - u_j + \eps/2 \mid \Xc_{\ip{ns}+1:\ip{nt}} \right\} \\
                        &\leq \sup_{\substack{(s,t) \in \Lambda \\ t-s \geq \delta}} \sup_{u_j \in [\eps, 1-\eps]} \frac{\Var\left(W_{j,u_j}^{\sss \Xc_{\ip{ns}+1:\ip{nt}}}\mid \Xc_{\ip{ns}+1:\ip{nt}} \right)}{(u_j-\eps/2)^2} \\
                        &\leq \sup_{\substack{(s,t) \in \Lambda \\ t-s \geq \delta}} \sup_{u_j \in [\eps, 1-\eps]} \frac{a_{\ip{nt}-\ip{ns}}}{(u_j-\eps/2)^2} \leq \sup_{\substack{(s,t) \in \Lambda \\ t-s \geq \delta}} a_{\ip{nt}-\ip{ns}} \sup_{u_j \in [\eps, 1-\eps]} \frac{1}{(u_j - \eps/2)^2} \\
                        &\leq \frac{4}{\eps^2} \sup_{k \geq \ip{n\delta}-1} a_k \to 0.
\end{align*}}
Similarly, concerning $I_{j,n,\delta,\eps}''' $, for almost any sequence $\bm X_1, \bm X_2, \dots$, conditionally on $\bm X_1, \bm X_2, \dots$, we obtain that
{\small \begin{align*}
  I_{j,n,\delta,\eps}''' &= \sup_{\substack{(s,t) \in \Lambda \\ t-s \geq \delta}} \sup_{u_j \in [\eps, 1-\eps]} \Pr \left\{W_{j,u_j}^{\sss \Xc_{\ip{ns}+1:\ip{nt}}} > 1- \eps/2 \mid \Xc_{\ip{ns}+1:\ip{nt}} \right\}  \\
&= \sup_{\substack{(s,t) \in \Lambda \\ t-s \geq \delta}} \sup_{u_j \in [\eps, 1-\eps]} \Pr \left\{W_{j,u_j}^{\sss \Xc_{\ip{ns}+1:\ip{nt}}} - u_j > 1- \eps/2 - u_j \mid \Xc_{\ip{ns}+1:\ip{nt}} \right\} \\
&\leq \sup_{\substack{(s,t) \in \Lambda \\ t-s \geq \delta}} \sup_{u_j \in [\eps, 1-\eps]} \Pr \left\{\left| W_{j,u_j}^{\sss \Xc_{\ip{ns}+1:\ip{nt}}} - u_j \right| \geq 1- \eps/2 - u_j \mid \Xc_{\ip{ns}+1:\ip{nt}} \right\} \\
                         &\leq \sup_{\substack{(s,t) \in \Lambda \\ t-s \geq \delta}} \sup_{u_j \in [\eps, 1-\eps]} \frac{\Var\left(W_{j,u_j}^{\sss \Xc_{\ip{ns}+1:\ip{nt}}} \mid \Xc_{\ip{ns}+1:\ip{nt}} \right)}{(1- \eps/2 - u_j)^2} \\
                         &\leq \sup_{\substack{(s,t) \in \Lambda \\ t-s \geq \delta}} \sup_{u_j \in [\eps, 1-\eps]} \frac{a_{\ip{nt}-\ip{ns}}}{(1- \eps/2 - u_j)^2}\\
&\leq \sup_{\substack{(s,t) \in \Lambda \\ t-s \geq \delta}} a_{\ip{nt}-\ip{ns}} \sup_{u_j \in [\eps, 1-\eps]} \frac{1}{(1- \eps/2 - u_j)^2} \leq \frac{4}{\eps^2} \sup_{k \geq \ip{n\delta}-1} a_k \to 0.
\end{align*}}

\emph{Term $J_{j,n,\delta,\eps}$:} Let $\eta  > 0$ and let us show that $J_{j,n,\delta,\eps} \leq \eta$ for $n$ sufficiently large. For any $\rho \in (0,1)$, from the triangle inequality and the fact that $0 \leq \dot{C}_j \leq 1$, we have that $J_{j,n,\delta,\eps}$ is smaller than
\begin{align*}
  \nonumber
  & \sup_{\substack{(s,t) \in \Lambda \\ t-s \geq \delta}} \sup_{\substack{\bm u \in [0,1]^d \\ u_j \in [\eps,1-\eps]}} \left|\int_{\{\bm w \in [0,1]^d: |\bm w - \bm u|_\infty \leq \rho \}} \{ \dot C_j(\bm w) -  \dot C_j(\bm u) \} \dd \nu_{\bm u}^{\sss \Xc_{\ip{ns}+1:\ip{nt}}}(\bm w)\right|\\
  \nonumber
                           &+ \sup_{\substack{(s,t) \in \Lambda \\ t-s \geq \delta}} \sup_{\substack{\bm u \in [0,1]^d \\ u_j \in [\eps,1-\eps]}} \left|\int_{\{\bm w \in [0,1]^d: |\bm w - \bm u|_\infty > \rho \}} \{ \dot C_j(\bm w) -  \dot C_j(\bm u) \} \dd \nu_{\bm u}^{\sss \Xc_{\ip{ns}+1:\ip{nt}}}(\bm w)\right|\\
  &\leq \; J_{j,\eps,\rho} ' + J_{j,n,\delta,\rho} '',
\end{align*}
where
\begin{align*}
J_{j,\eps,\rho}' &=  \sup_{\substack{\bm u \in [0,1]^d \\ u_j \in [\eps,1-\eps]}} \sup_{\substack{\bm w \in [0,1]^d \\ |\bm w - \bm u|_\infty \leq \rho}} \left| \dot C_j(\bm w) -  \dot C_j(\bm u) \right|, \\
J_{j,n,\delta,\rho} '' &= \sup_{\substack{(s,t) \in \Lambda \\ t-s \geq \delta}} \sup_{\bm u \in [0,1]^d} \int_{[0,1]^d} \1\{|\bm w-\bm u|_\infty>\rho\} \dd \nu_{\bm u}^{\sss \Xc_{\ip{ns}+1:\ip{nt}}}(\bm w).
\end{align*}
From Condition~\ref{cond:pd}, $\dot C_j$ is uniformly continuous on the set $\{\bm u \in [0, 1]^d : u_j \in [\eps/2,1-\eps/2] \}$. We then choose $\rho = \rho(\eps,\eta)>0$ sufficiently small such that
\begin{align}
 \label{ineq:J:1}
J_{j,\eps,\rho}' = \sup_{\substack{\bm u \in [0, 1]^d \\ u_j \in [\eps,1-\eps]}} \sup_{\substack{\bm w \in [0,1]^d \\ |\bm w-\bm u|_{\infty}\leq \rho}} \left|\dot{C}_j(\bm w) - \dot{C}_j(\bm u)\right| \leq \frac{\eta}{2}.
\end{align}
As far as $J_{j,n,\delta,\rho}''$ is concerned, using Chebyshev's inequality and Condition~\ref{cond:var:W:weak}, for almost any sequence $\bm X_1, \bm X_2, \dots$, conditionally on $\bm X_1, \bm X_2, \dots$, we obtain that
\begin{align*}
J_{j,n,\delta,\rho} '' &= \sup_{\substack{(s,t) \in \Lambda \\ t-s \geq \delta}}\sup_{\bm u \in [0,1]^d} \nu_{\bm u}^{\sss \Xc_{\ip{ns}+1:\ip{nt}}}(\{\bm w \in [0,1]^d:|\bm u - \bm w|_{\infty} > \rho \})\\
                                 & = \sup_{\substack{(s,t) \in \Lambda \\ t-s \geq \delta}} \sup_{\bm u \in [0,1]^d} \Pr \left[\bigcup_{j=1}^d \left\{\left|W_{j,u_j}^{\sss \Xc_{\ip{ns}+1:\ip{nt}}} - u_j\right| > \rho\right\} \mid \Xc_{\ip{ns}+1:\ip{nt}} \right] \\
                                 & \leq \sum_{j=1}^d \sup_{\substack{(s,t) \in \Lambda \\ t-s \geq \delta}} \sup_{\bm u \in [0,1]^d} \Pr \left\{ \left|W_{j,u_j}^{\sss \Xc_{\ip{ns}+1:\ip{nt}}} - u_j\right| > \rho \mid \Xc_{\ip{ns}+1:\ip{nt}} \right \}\\
                                 & \leq \sum_{j=1}^d \sup_{\substack{(s,t) \in \Lambda \\ t-s \geq \delta}} \sup_{\bm u \in [0,1]^d} \frac{\Var\left(W_{j,u_j}^{\sss \Xc_{\ip{ns}+1:\ip{nt}}} \mid \Xc_{\ip{ns}+1:\ip{nt}} \right)}{\rho^2}\\
                                 & \leq \frac{d}{\rho^2}\sup_{\substack{(s,t) \in \Lambda \\ t-s \geq \delta}} a_{\ip{nt}-\ip{ns}} \leq \frac{d}{\rho^2} \sup_{k \geq \ip{n\delta}-1} a_k \to 0,
\end{align*}
which implies that, for $n$ sufficiently large, with probability~1, $J_{j,n,\delta,\rho}'' \leq \eta/2$. Using additionally~\eqref{ineq:J:1}, we obtain that $J_{j,n,\delta,\eps}$ converges almost surely to zero, which concludes the proof of~\eqref{eq:wc:Delta:nu:trunc}. The proof of the analogous result for $\dotu{C}_{j,k:l}^{\sss \nabla,\nu}$ in~\eqref{eq:pd:est:nabla:nu:trunc} is almost identical.
\end{proof}

\begin{proof}[\bf Proof of Proposition~\ref{prop:wc:Bern}]
Fix $j \in \{1,\dots,d\}$, $\delta \in (0,1)$ and $\eps \in (0,1/2)$. From~\eqref{eq:Bern:pd}, we have that, for any $(s,t) \in \Lambda$ and $\bm u \in [0,1]^d$, 
\begin{multline*}
\dot C_{j,\ip{ns}+1:\ip{nt},m_{\ip{nt}-\ip{ns}}}^{\sss \Bern}(\bm u) \\= \int_{[0,1]^d} \dot C_{j,\ip{ns}+1:\ip{nt},1/m_{\ip{nt}-\ip{ns}},0}^{\sss \nabla}(\bm w) \dd \tilde \mu_{j,m_{\ip{nt}-\ip{ns}},\bm u}(\bm w),
\end{multline*}
where $\dot{C}_{j,k:l,1/m,0}^{\sss \nabla}$ is defined in~\eqref{eq:pd:est:nabla} and, for any $m \geq 2$, $\tilde \mu_{j,m,\bm u}$ is the law of the random vector $(\tilde S_{m,1,u_1}/m,\dots, \tilde S_{m,d,u_d}/m)$ whose components are independent such that, for $i \in \{1,\dots,d\} \setminus \{j\}$, $\tilde S_{m,i,u_i}$ is Binomial$(m,u_i)$ while $\tilde S_{m,j,u_j}$ is Binomial$(m-1,u_j)$. It follows that, for any $(s,t) \in \Lambda$ and $\bm u \in [0,1]^d$, 
\begin{multline}
\label{eq:Bern:alt}
\dot C_{j,\ip{ns}+1:\ip{nt},m_{\ip{nt}-\ip{ns}}}^{\sss \Bern}(\bm u) \\= \int_{\Wc_{j,n,s,t}} \dot C_{j,\ip{ns}+1:\ip{nt},1/m_{\ip{nt}-\ip{ns}},0}^{\sss \nabla}(\bm w) \dd \tilde \mu_{j,m_{\ip{nt}-\ip{ns}},\bm u}(\bm w),
\end{multline}
where $\Wc_{j,n,s,t} =\{ \bm w \in [0,1]^d : w_j \leq 1 - 1/m_{\ip{nt}-\ip{ns}} \}$. For the sake of a more compact notation, from now on, we shall write $m_{s,t}$ for $m_{\ip{nt}-\ip{ns}}$, $(s,t) \in \Lambda$. From the triangle inequality, the left-hand side of~\eqref{eq:wc:Bern} is smaller than $I_{j,n,\delta,\eps} +J_{j,n,\delta,\eps}$, where
\begin{align*}
I_{j,n,\delta,\eps} &= \sup_{\substack{(s,t) \in \Lambda \\ t-s \geq \delta}} \sup_{\substack{\bm u \in [0,1]^d\\ u_j \in [\eps, 1-\eps]}}  \left| \int_{\Wc_{j,n,s,t}}\left\{ \dot{C}_{j,\ip{ns}+1:\ip{nt},1/m_{s,t},0}^{\sss \nabla}(\bm{w}) \right. \right. \\ & \qquad \qquad - \left. \left. \dot C_j(\bm w)\right\} \dd \tilde \mu_{j,m_{s,t},\bm u}(\bm w) \right|,\\
J_{j,n,\delta,\eps} &= \sup_{\substack{(s,t) \in \Lambda \\ t-s \geq \delta}} \sup_{\substack{\bm u \in [0,1]^d\\ u_j \in [\eps, 1-\eps]}}  \left|  \int_{[0,1]^d} \dot C_j(\bm w) \dd \tilde \mu_{j,m_{s,t},\bm u}(\bm w) -  \dot C_j(\bm u)\right|.
\end{align*}
For any $n \in \N$, $\bm x \in (\R^d)^n$ and $\bm u \in [0,1]^d$, let $\nu_{\bm u}^{\bm x} = \tilde \mu_{j, \ip{Ln^\theta}  \vee 2,\bm u}$. With this notation, Condition~\ref{cond:var:W:weak} holds for the considered smoothing distributions and it can be verified that $J_{j,n,\delta,\eps} = o_\Pr(1)$ by proceeding exactly as in the proof of Proposition~\ref{prop:wc:Delta:nu:trunc} for the analogous term. It thus remain to show that $I_{j,n,\delta,\eps} = o_\Pr(1)$.

From the triangle inequality, we have that $I_{j,n,\delta,\eps}$ is smaller than
{\small \begin{align*}
  &\sup_{\substack{(s,t) \in \Lambda \\ t-s \geq \delta}} \sup_{\substack{\bm u \in [0,1]^d\\ u_j \in [\eps, 1-\eps]}}  \left| \int_{\substack{\{\bm w \in \Wc_{j,n,s,t} : \\ w_j \in [\eps/2, 1-\eps/2]\}}} \left\{ \dot{C}_{j,\ip{ns}+1:\ip{nt},1/m_{s,t},0}^{\sss \nabla}(\bm{w}) -  \dot C_j(\bm w)\right\} \dd \tilde \mu_{j,m_{s,t},\bm u}(\bm w) \right|\\
  &+ \sup_{\substack{(s,t) \in \Lambda \\ t-s \geq \delta}} \sup_{\substack{\bm u \in [0,1]^d\\ u_j \in [\eps, 1-\eps]}}  \left| \int_{\substack{\{\bm w \in \Wc_{j,n,s,t} :\\ w_j < \eps/2\}}} \left\{ \dot{C}_{j,\ip{ns}+1:\ip{nt},1/m_{s,t},0}^{\sss \nabla}(\bm{w})  -  \dot C_j(\bm w)\right\} \dd \tilde \mu_{j,m_{s,t},\bm u}(\bm w) \right|\\
  &+ \sup_{\substack{(s,t) \in \Lambda \\ t-s \geq \delta}} \sup_{\substack{\bm u \in [0,1]^d\\ u_j \in [\eps, 1-\eps]}}  \left| \int_{\substack{\{\bm w \in \Wc_{j,n,s,t} :\\ w_j > 1-\eps/2\}}} \left\{ \dot{C}_{j,\ip{ns}+1:\ip{nt},1/m_{s,t},0}^{\sss \nabla}(\bm{w}) - \dot C_j(\bm w)\right\} \dd \tilde \mu_{j,m_{s,t},\bm u}(\bm w) \right| \\
  \leq & \; I_{j,n,\delta,\eps}' + M_n I_{j,n,\delta,\eps}'' + M_n I_{j,n,\delta,\eps}''',
\end{align*}}
where
\begin{align*}
I_{j,n,\delta,\eps}' =&  \sup_{\substack{(s,t) \in \Lambda \\ t-s \geq \delta}} \sup_{\substack{\bm w \in [0,1]^d\\ w_j \in [\eps/2, 1-\eps/2]}}  \left| \dot{C}_{j,\ip{ns}+1:\ip{nt},1/m_{s,t},0}^{\sss \nabla}(\bm w) - \dot C_j(\bm w) \right|,\\
I_{j,n,\delta,\eps}'' =& \sup_{\substack{(s,t) \in \Lambda \\ t-s \geq \delta}} \sup_{\substack{\bm u \in [0,1]^d\\ u_j \in [\eps, 1-\eps]}} \tilde \mu_{j,m_{s,t},\bm u}\left\{\bm w \in [0,1]^d:w_j < \eps/2  \right\},\\
I_{j,n,\delta,\eps}''' =& \sup_{\substack{(s,t) \in \Lambda \\ t-s \geq \delta}} \sup_{\substack{\bm u \in [0,1]^d\\ u_j \in [\eps, 1-\eps]}} \tilde \mu_{j,m_{s,t},\bm u}\left\{\bm w \in [0,1]^d:w_j > 1 - \eps/2  \right\}, \\
M_n =& \sup_{(s,t) \in \Lambda} \sup_{\bm w \in \Wc_{j,n,s,t}}  \left| \dot C_{j,\ip{ns}+1:\ip{nt},1/m_{s,t},0}^{\sss \nabla}(\bm w) \right|.
\end{align*}
Since the conditions of the proposition imply that Condition~\ref{cond:band} holds with $h(\bm x) = 1/(\ip{Ln^\theta} \vee 2)$ and $h'(\bm x) = 0$ for all $n \in \N$ and $\bm x \in (\R^d)^n$, we have that $I_{j,n,\delta,\eps}' = o_\Pr(1)$ as a consequence of Corollary~\ref{cor:wc:Delta}. Also, given that Condition~\ref{cond:var:W:weak} holds for the considered smoothing distributions, it can be verified that $I_{j,n,\delta,\eps}''$ and $I_{j,n,\delta,\eps}'''$ converge almost surely to zero by proceeding exactly as in the proof of Proposition~\ref{prop:wc:Delta:nu:trunc} for the analogous terms. To complete the proof of~\eqref{eq:wc:Bern}, it suffices to show that, there exists a constant $\zeta > 0$ such that, for any $n \in \N$, $M_n < \zeta$ almost surely.

Fix $n \in \N$. From the adopted conventions, we have that $\dot C_{j,\ip{ns}+1:\ip{nt},1/m_{s,t},0}^{\sss \nabla} = 0$ for all $(s,t) \in \Lambda$ such that $\ip{ns} = \ip{nt}$. Fix $(s,t) \in \Lambda$ such that $\ip{ns} < \ip{nt}$ and let $p = \ip{nt} - \ip{ns}$. The empirical copula $C_{\ip{ns}+1:\ip{nt}}$, generically defined in~\eqref{eq:C:kl}, is a multivariate d.f.\ whose $d$ univariate margins, under Condition~\ref{cond:no:ties}, are all equal to $G_{\ip{ns}+1:\ip{nt}}$, where $G_{\ip{ns}+1:\ip{nt}}(u) = \ip{p u} / p$, $u \in [0,1]$. As a consequence of a well-known property of multivariate d.f.s \citep[see, e.g.,][Lemma~1.2.14]{DurSem15}, we have that
$$
\left| C_{\ip{ns}+1:\ip{nt}}(\bm u) - C_{\ip{ns}+1:\ip{nt}}(\bm v) \right| \leq \sum_{j=1}^d \left| G_{{\ip{ns}+1:\ip{nt}}}(u_j) - G_{{\ip{ns}+1:\ip{nt}}}(v_j) \right|
$$
for all $\bm u, \bm v \in [0,1]^d$. We then obtain that, for any $\bm u \in \Wc_{j,n,s,t}$,
\begin{multline*}
\left| C_{\ip{ns}+1:\ip{nt}}(\bm u + \bm e_j / m_{s,t}) - C_{\ip{ns}+1:\ip{nt}}(\bm u) \right| \\ \leq \left| G_{{\ip{ns}+1:\ip{nt}}}(u_j+1/m_{s,t}) - G_{{\ip{ns}+1:\ip{nt}}}(u_j)  \right|,
\end{multline*}
which implies that
\begin{align*}
  \left| \dot C_{j,\ip{ns}+1:\ip{nt}}(\bm u) \right| &\leq \frac{\left| G_{{\ip{ns}+1:\ip{nt}}}(u_j+1/m_{s,t}) - G_{{\ip{ns}+1:\ip{nt}}}(u_j) \right|}{1/m_{s,t}}  \\
                                                     &=  m_{s,t} \left\{ \frac{\ip{p(u_j + 1/m_{s,t})}}{p} - \frac{\ip{pu_j}}{p} \right\} \\
                                                     &\leq m_{s,t} \left\{ \frac{p(u_j+1/m_{s,t})}{p} - \frac{pu_j - 1}{p} \right\} \\
                                                     &\leq  m_{s,t} \left( \frac{1}{m_{s,t}} + \frac{1}{p} \right) \leq 1 + \frac{m_{s,t}}{p} = 1 + \frac{\ip{Lp^\theta} \vee 2}{p} \\
                                                     &\leq 1 + Lp^{\theta-1} \vee (2/p) \leq 1 + L \vee 2,
\end{align*}
which completes the proof of~\eqref{eq:wc:Bern}. The fact that~\eqref{eq:bounded:Bern} holds is finally an immediate consequence of the previous centered display and~\eqref{eq:Bern:alt}.
\end{proof}

\end{appendix}

\bibliographystyle{imsart-nameyear}
\bibliography{biblio}

\end{document}